\documentclass[12pt]{article}

\usepackage{amsfonts,latexsym,amssymb,amsmath,amsthm,authblk,color,hyperref}

\textwidth=16cm
\textheight=23.5cm \hoffset=-10mm \voffset=-23mm

\date{\today}
\def \al{\alpha}
\def \be{\beta}
\def \ga{\gamma}
\def \dl{\delta}
\def \ep{\varepsilon}

\def \la{\lambda}
\def \si{\sigma}
\def \ph{\varphi}
\def \om{\omega}
\def \Ga{\Gamma}

\def \Om{\Omega}

\def \operatorname#1{\mathop{\rm #1}}

\def\div{\operatorname{div}}
\def\osc{\operatorname{osc}}
\def\osc2{\operatorname{osc^2}}

\def\supp{\operatorname{supp}}

\def\cd{\partial}
\def\esssup{\operatorname{esssup}}

\def\Q0{Q(x_0,t_0,R)}
\def\0{{x_0,t_0,R}}
\def\build#1_#2{\mathrel{\mathop{\kern 0pt#1}\limits_{#2}}}

\newtheorem{theorem}{Theorem}[section]

\newtheorem{proposition}{Proposition}[section]
\newtheorem{lemma}{Lemma}[section]
\newtheorem{definition}{Definition}[section]

\begin{document}
\title{On the existence and  uniqueness of   weak solutions \\ to     elliptic   equations with a singular drift}
\author{Misha Chernobai\footnote{Department of Mathematics, UBC, Canada, mchernobay@gmail.com}, Tim  Shilkin\footnote{V.A.~Steklov Mathematical Institute, St.-Petersburg, Russia,  and Leipzig University, Germany,  tim.shilkin@gmail.com}}

\maketitle
 \abstract{In this paper we study the Dirichlet problem for a scalar elliptic equation in a bounded Lipschitz domain $\Om \subset \Bbb R^3$ with a singular drift of the form  $b_0= b-\al \frac {x'}{|x'|^2}$ where $x'=(x_1,x_2,0)$, $\al \in \Bbb R$ is a parameter and $b$ is a divergence free vector field having essentially the same regularity as the potential part of the drift. Such drifts naturally  arise  in the theory  of  axially symmetric solutions to the Navier-Stokes equations. For $\al <0$ the divergence of such drifts   is positive  which potentially can ruin the uniqueness of   solutions. Nevertheless, for $\al<0$ we prove  existence and H\" older continuity of a unique  weak solution   which vanishes on the axis  $\Ga:=\{ ~x\in \Bbb R^3:~|x'|=0~\}$.}

\section{Introduction and Main Results}

\bigskip  Assume  $\Om \subset \Bbb R^3$ is a bounded domain     with the Lipschitz boundary  $\cd \Om$. Without loss of generality we can assume $\Om$ contains the origin.    We consider the
following boundary value problem:
\begin{equation}
\left\{ \quad \gathered     -\Delta u +  b_0 \cdot \nabla u \ = \ -\div f \qquad\mbox{in}\quad \Om, \\
u|_{\cd \Om} \ = \ 0.
\endgathered\right.
\label{Equation}
\end{equation}
Here   $u:  \Om\to \Bbb R$ is unknown,   $b_0:\Om\to \Bbb R^3$ and
$f: \Om \to \Bbb R^3$ are given functions.

In this paper we study the problem \eqref{Equation} for the drift $b_0$ of a special form. Our motivating example is
\begin{equation} b_0(x) =  -\al  \, \frac {x'}{|x'|^2}, \qquad x'=(x_1, x_2, 0), \qquad \al\in \Bbb R
\label{Typical_b}
\end{equation}
where   $\al \in \Bbb R$ is a  parameter.   Clearly, in this case the drift does not belong to $L_2(\Om)$, so instead we  assume that $b_0$ belongs to some critical
weak Morrey space
\begin{equation}
b_0\in L^{2, 1}_w(\Om), \label{Assumptions_Morrey_space_b_0}
\end{equation}
where
$L^{p,\la}_w(\Om)$ is the weak Morrey  space  equipped
with the quasinorm
$$
\| b_0\|_{L^{p,\la}_w(\Om)} \ := \ \sup\limits_{x_0\in
\Om}\sup\limits_{r<1} \, r^{-\frac \la p} \,
\|b_0\|_{L_{p,w}(B_r(x_0)\cap \Om)}
$$
and $L_{p,w}(\Om)$ is the weak Lebesgue space equipped with the quasinorm
\begin{equation}
\| b_0\|_{L_{p,w}(\Om)} \ := \ \sup\limits_{s>0}
s~|\{~x \in \Om: ~|b_0(x )|>s~\}|^{\frac 1p}.
\label{Weak_Lebesgue}
\end{equation}
We define the bilinear form $\mathcal B[u,\eta]$ by
\begin{equation}
\mathcal B[u,\eta] \ := \ \int\limits_{\Om}  \eta \,  b_0   \cdot \nabla u  ~dx. \label{Bilinear_Form}
\end{equation}
Note that for
$b_0$   satisfying \eqref{Assumptions_Morrey_space_b_0} the bilinear form   \eqref{Bilinear_Form}
generally speaking is not well-defined  for $u\in W^{1 }_2(\Om)$ where we denote by $W^k_p(\Om)$ the standard Sobolev space, see notation at the end of this section. Nevertheless, $\mathcal B[u,\eta]$  is
well-defined at least   for $u\in W^{1 }_p(\Om)$ with $p>2$ and
$\eta\in L_{\frac{2p}{p-2}}(\Om)$.  So,
instead of the standard notion of weak solutions from the energy
class $W^{1 }_2(\Om)$   we introduce the definition of $p$-weak
solutions to the problem \eqref{Equation},  see also the related definitions in \cite{Kang_Kim}, \cite{Tsai},  \cite{Kwon_1}:

\begin{definition}\label{p-weak}
Assume  $p> 2$, $b_0\in L_{p'}(\Om)$, $p'=\frac{p}{p-1}$, and  $f\in L_1(\Om)$.
 We say $u$ is a $p$-weak solution to the problem
\eqref{Equation} if $ u\in   \overset{\circ}{W}{^1_p}(\Om) $
and $u$ satisfies the identity
\begin{equation}
\gathered  \int\limits_{\Om}   \nabla u \cdot
\nabla\eta \, dx  \ + \ \mathcal B[u,\eta] \ = \
\int\limits_{\Om} f\cdot \nabla \eta~dx   , \qquad  \forall~\eta\in C_0^\infty(\Om).
\endgathered
\label{Identity}
\end{equation}
If  $b\in L_2(\Om)$  and  $u\in   \overset{\circ}{W}{^1_2}(\Om)$ satisfy  \eqref{Identity} then we call $u$ a weak solution to the problem  \eqref{Equation}. Obviously,  in this case  $p$-weak solutions are some subclass of weak solutions.

\end{definition}
Note that if $u$ is a weak or a $p$-weak solution to \eqref{Equation} and $f\in L_2(\Omega)$ then by density arguments we can extend the class of test functions in \eqref{Identity} from $\eta\in C_0^\infty(\Om)$ to all functions $\eta\in\overset{\circ}{W}{^1_2}(\Om)\cap L_{\infty}(\Omega)$.
We can consider two special cases of  the drift $b_0$:
\begin{equation}
 \div b_0  \, \le  \, 0 \quad \mbox{in} \quad
\mathcal D'(\Om), \label{Assumptions_b_good_sign}
\end{equation}
or
\begin{equation}
 \div b_0  \,  \ge  \, 0 \quad \mbox{in} \quad
\mathcal D'(\Om), \label{Assumptions_b_bad_sign}
\end{equation}
where we denote by $\mathcal D'(\Om)$ the space of distibutions on $\Om$.
The principal difference between  the cases \eqref{Assumptions_b_good_sign} and \eqref{Assumptions_b_bad_sign} is due to the fact that under the assumption \eqref{Assumptions_b_good_sign} the quadratic form $\mathcal B[u,u]$ provides (at least, formally) a positive support to the quadratic form of the elliptic operator in \eqref{Equation}, while in the case of \eqref{Assumptions_b_bad_sign}  the quadratic form $\mathcal B[u,u]$  is   non-positive and hence it ``shifts'' the operator to the ``spectral area''. For example, it is well-known that in the case \eqref{Assumptions_b_bad_sign}   the uniqueness for  the problem \eqref{Equation}
can be violated even for smooth solutions.
Indeed, the function $u(x) = c(1-|x|^2)$ is a solution to the problem \eqref{Equation} in the unite ball $\Om = \{ \, x\in \Bbb R^n: \,  |x|<1\, \}$ corresponding to $b_0(x) = n\, \frac{x}{|x|^2}$  with $b_0\in L_{n,w}(\Om)$ satisfying \eqref{Assumptions_b_bad_sign}, see also the discussion in      \cite{Chernobai_Shilkin}, \cite{Filonov_Shilkin}.

The case \eqref{Assumptions_b_good_sign} under the assumption \eqref{Assumptions_Morrey_space_b_0} was studied      in  \cite{Chernobai_Shilkin_1} (see also \cite{Chernobai_Shilkin} for the 2D case). In this paper we focus on the case \eqref{Assumptions_b_bad_sign} which is much more subtle and  for now we are not able to treat it in   full generality. So,  we restrict ourselves to the drifts with the potential part of some specific  form. Namely, we assume that
\begin{equation}\label{Drift_Specific}
b_0(x) \ = \ b(x) \, + \, \al \, \frac{x'}{|x'|^2},
\end{equation}
where $b: \Om \to \Bbb R^3$ satisfies the divergence-free condition (in the sense of distributions)
\begin{equation} \label{Assumption_b}
\div b = 0 \qquad \mbox{in}\quad \mathcal D'(\Om).
\end{equation}
Note that from \eqref{Assumptions_Morrey_space_b_0} we obtain
\begin{equation}
b\in L^{2, 1}_w(\Om). \label{Assumptions_Morrey_space_b}
\end{equation}
The relation \eqref{Drift_Specific} can be viewed as the Helmogoltz decomposition of the vector field $b_0$
\begin{equation}\label{Helmgoltz_decomposition}
b_0 \ = \ b + \al\, \nabla h  \quad \mbox{a.e. in} \quad \Om,
\end{equation}
where the potential part of the drift $h: \Om\to \Bbb R$   is specified by
\begin{equation}
  h(x) \, =   \, \ln \frac 1{|x'|}.
\label{Typical_h}
\end{equation}
In this case we have
\begin{equation}
-\Delta h  \ =  \  2\pi\,  \dl_\Ga \quad \mbox{in} \quad
\mathcal D'(\Om),  \qquad \Ga \, = \, \{\, x\in \bar \Om : \, x'=0 \, \},\label{h_is harmonic}
\end{equation}
where $\dl_\Ga $ is the delta-function concentrated on $\Ga$, i.e.
$$
\langle \dl_\Ga , \ph\rangle \ := \ \int\limits_\Ga \ph(x)\, dl_x, \qquad \forall\, \ph\in C_0^\infty(\Bbb R^3).
$$
Certainly, in the case of \eqref{Drift_Specific}  the identity \eqref{Identity} reduces  to
\begin{equation}
\int\limits_\Om \nabla u\cdot (\nabla \eta +b\eta)\, dx \, + \, 2\pi \, \al\, \int\limits_\Ga u(x)\eta(x)\, dl_x \, = \, \int\limits_{\Om} f\cdot \nabla \eta~dx , \quad \forall\, \eta\in C_0^\infty(\Om).
\label{Identity1}
\end{equation}
Note that    for  $u \in W^1_p(\Om)$ with $p>2$ the trace $u|_\Ga$ of $u$ on   $\Ga$ satisfies
\begin{equation}\label{Trace}
  u|_\Ga \in W^{1-\frac 2p}_p(\Ga),
\end{equation}
where   $W^s_p(\Ga)$, $s>0$, is the Slobodetskii-Sobolev space, see, for example,  \cite{BIN}. So, for  $p$-weak solutions in the sense of Definition \ref{p-weak}
 the second term in the left-hand side of  \eqref{Identity1} is well-defined.
Note also that the condition \eqref{Assumptions_b_bad_sign} corresponds to $\al\le 0$ in \eqref{Drift_Specific} and \eqref{Identity1}.

 The drift of type \eqref{Drift_Specific}  plays an important role in the theory of axially symmetric solutions to  the Navier-Stokes equations, see, for example,
\cite{KNSS},  \cite{Seregin_Book}, \cite{Seregin_Axi_1}, \cite{Seregin_Axi_2}, \cite{Seregin_Shilkin_UMN},
\cite{SS}, \cite{Tsai_Book}.
In the axially symmetric case the Navier-Stokes   system can reduced to the scalar equation
\begin{equation}
\cd_t u -\Delta u + \Big(v -\al \, \frac{x'}{|x'|^2}\Big)\cdot
\nabla u \ = \ 0 \qquad\mbox{in}\quad \Bbb R^3\times (0,T),
\label{NSE}
\end{equation}
where $v=v(x,t)$ is the divergence-free
velocity field  and $u= u(x,t)$ is some
auxiliary scalar function. For example, for axially symmetric
solutions without swirl (i.e. if $v(x,t) = v_r(r,z,t){  e}_r +
v_z(r,z,t){ e}_z$ where ${ e}_r$, ${ e}_\ph$, ${  e}_z$ is the
standard cylindrical basis) the equation \eqref{NSE} is satisfied
for $\al= 2$ and $u= \frac{\om_\ph}r$, where $\om_\ph:=v_{r,z} -
v_{z,r}$ and $r=|x'|$. In the case of general axially symmetric
solutions $v(x,t) = v_r(r,z,t){  e}_r + v_\ph(r,z,t){ e}_\ph +
v_z(r,z,t){  e}_z$ the equation \eqref{NSE} holds for $\al=-2$ and
$u=rv_\ph$.

It is well-known in the Navier-Stokes theory (see
\cite{KNSS},  \cite{Seregin_Book}, \cite{Seregin_Shilkin_UMN},
\cite{SS}, \cite{Tsai_Book}) that while in the case $\al> 0$ some
results like Liouville-type theorems assume no special conditions on
the solutions $u$ to the equation \eqref{NSE} besides a proper decay
of the drift $v$, the analogues results in the case $\al<0$ require
the additional condition $u|_{\Ga}=0$. Our equation \eqref{Equation} under the assumption \eqref{Drift_Specific} can be considered as
the elliptic model for the general equation \eqref{NSE}. The main  goals of the present paper is to investigate the equation \eqref{NSE} from the point of view of the ``general theory'' (i.e.  without the assumption on the axial symmetry of $u$ and other specific properties of solutions to the Navier-Stokes equations). In particular, we would like to clarify   the role  which the condition $u|_{\Ga}=0$  plays in the theory. On the other hand, our present contribution can be viewed as  a 3D extension of  the results  obtained earlier in   \cite{Chernobai_Shilkin} in the 2D case.

\bigskip
The  main result the present paper are the following two theorems:

\begin{theorem}\label{Theorem_3} Assume     $\Om\subset \Bbb R^3$ is a bounded Lipschitz domain containing the origin,  $b_0$ is given by \eqref{Drift_Specific} with $\al<0$,  $b$  satisfies   \eqref{Assumption_b}, \eqref{Assumptions_Morrey_space_b} and
$f\in L_q(\Om)$ with $q>3$.   Then  every $p$--weak solution to the problem \eqref{Equation} satisfying the condition
 \begin{equation} u|_{\Ga} \ = \ 0 \label{Zero_on_Gamma}
\end{equation}
is H\" older continuous. Namely,
there    exists    $\mu\in (0,1)$  depending only on     $q$,  $\al$, $\| b\|_{L^{2, 1}_w(\Om)}$ and the Lipschitz constant of $\cd \Om$
such that
if for some $p>2$ a function $u$ is a $p$--weak solution to the problem \eqref{Equation} corresponding to the right-hand side $f\in L_q(\Om)$  and satisfying the condition
 \eqref{Zero_on_Gamma}
in the sense of traces
 then    $u$ is H\" older continuous on $\bar \Om$ with the exponent $\mu$  and the  estimate
\begin{equation}
  \|u\|_{C^\mu(\bar \Om)} \ \le \ c~\| f\|_{L_q(\Om)},
\label{Holder_Estimate}
\end{equation}
 holds  with the
constant $c>0$ depending only on     $\Om$, $\al$,  $q$, $\|
b\|_{L^{2, 1}_{w}(\Om)} $ and the Lipschitz constant of $\cd \Om$.

\end{theorem}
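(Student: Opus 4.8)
The plan is to reduce the theorem to a family of local oscillation estimates — interior, near $\cd\Om$, and near $\Ga$ — and then glue them, with the quantitative dependence on $\|f\|_{L_q}$ tracked throughout. The structural fact that renders the ``bad-sign'' parameter $\al<0$ harmless is that in the weak formulation \eqref{Identity} the only term through which the sign of $\al$ acts against coercivity is the term concentrated on the axis (the boundary integral $2\pi\al\int_\Ga u\,\eta\,dl$ appearing in \eqref{Identity1}); under hypothesis \eqref{Zero_on_Gamma} this term vanishes on every test function whose trace on $\Ga$ is zero, in particular on $\eta=\Phi(u)\zeta^2$ with $\Phi(0)=0$ — truncations $(u-k)_\pm\zeta^2$ with $\pm k\ge0$, or $\eta=u\zeta^2$. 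Hence every Caccioppoli-type inequality we write sees only the \emph{solenoidal} drift $b\in L^{2,1}_w(\Om)$ together with — near $\Ga$ only — a lower-order contribution of the potential part controlled by a Hardy-type inequality. For the solenoidal critical drift the needed tools (a Caccioppoli inequality, a Moser iteration for local boundedness, a De~Giorgi oscillation lemma) are exactly those of \cite{Chernobai_Shilkin_1}, where $b$ is handled via its antisymmetric potential and absorbed using the scale-invariance of $L^{2,1}_w$ and the absolute continuity of its tails; the right-hand side $-\div f$, $f\in L_q$, $q>3$, enters each local estimate on a ball $B_r$ as a term of size $c\,\|f\|_{L_q(\Om)}\,r^{1-\frac3q}$.

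First I would establish global boundedness. Since $p>2$ and $\Om$ is bounded, $u\in\WWW^1_2(\Om)$, so by the density remark following Definition~\ref{p-weak} (and $f\in L_q\subset L_2$) truncations of $u$ are admissible test functions. Replacing $u$ by $-u$, which is again a $p$-weak solution of the same type vanishing on $\Ga$, it suffices to bound $u$ from above: testing with $\min\{(u-k)_+,L\}\,\zeta^2$, $k\ge0$, letting $L\to\infty$, and running the Moser iteration over shrinking balls — valid up to $\cd\Om$ since $u|_{\cd\Om}=0$ and onto $\Ga$ since $(u-k)_+$ vanishes there — yields $u\in L_\infty(\Om)$ with $\|u\|_{L_\infty(\Om)}\le c\,\|f\|_{L_q(\Om)}$; in particular $f\equiv0$ forces $u\equiv0$, the maximum principle in this class. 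Next, H\"older continuity away from $\Ga$: if $\dist(x_0,\Ga)\ge 2d$ then on balls $B_r(x_0)$, $r<d$, the singular part $\al x'/|x'|^2$ of $b_0$ is a bounded smooth field, so $b_0$ is a solenoidal-plus-bounded critical drift and the oscillation lemma of \cite{Chernobai_Shilkin_1} gives $\osc_{B_{\th r}(x_0)}u\le\la\,\osc_{B_r(x_0)}u+c\,\|f\|_{L_q(\Om)}r^{1-\frac3q}$ with $\th,\la\in(0,1)$ fixed; telescoping yields a uniform H\"older exponent in $\Om\setminus\Ga$, and the same argument in half-balls, using $u|_{\cd\Om}=0$ and the Lipschitz character of $\cd\Om$, covers a neighbourhood of $\cd\Om$.

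The decisive and hardest step is oscillation decay at a point $x_0\in\Ga$. Here the sublevel sets of $u$ used in the classical De~Giorgi oscillation argument contain only the segment $\Ga\cap B_r(x_0)$, which has zero measure, so that argument is unavailable and one must exploit the singular drift itself; since $u(x_0)=0$, it suffices to prove $\sup_{B_{\th r}(x_0)}|u|\le\la\,\sup_{B_r(x_0)}|u|+c\,\|f\|_{L_q(\Om)}r^{1-\frac3q}$ for fixed $\th,\la\in(0,1)$. I would obtain this by a blow-up/compactness contradiction: if it failed, rescaling to unit balls produces $p$-weak solutions $u_m$ on $B_1$ with $\|u_m\|_{L_\infty(B_1)}\le1$, $\sup_{B_\th}|u_m|>\la$, vanishing on $\Ga\cap B_1$, with solenoidal drifts $b_m$ bounded in $L^{2,1}_w$ and $f_m\to0$; passing to the limit — which needs that $L^{2,1}_w$-bounded solenoidal fields converge, through their antisymmetric potentials, strongly enough to pass to the limit in the bilinear form — gives a bounded weak solution $u_\infty$ on $B_1$, still vanishing on $\Ga\cap B_1$, which by a Liouville-type statement must satisfy $\sup_{B_\th}|u_\infty|<\la$ for a suitable $\la$, a contradiction. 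The Liouville statement — a bounded weak solution on $B_1$ of $-\Delta w+(b+\al x'/|x'|^2)\cdot\nabla w=0$ with $w|_\Ga=0$ decays near $\Ga$ like a fixed positive power of $|x'|$ — I would prove by energy estimates in the thin cylinders $\{|x'|<\rho\}$ about $\Ga$: testing with $w\,\zeta^2$, using that the $\Ga$-term vanishes, controlling the singular term produced by the potential drift by a Hardy-type inequality for $W^1_p$-functions ($p>2$) vanishing on the codimension-two set $\Ga$, one closes a Caccioppoli estimate in the cylinder, and the rate is pinned down by comparison with the explicit radial sub- and supersolutions $|x'|^\la$ of the corresponding one-dimensional model equation, which makes $\la=\la(\al,\|b\|_{L^{2,1}_w(\Om)})$ constructive (a direct weighted De~Giorgi iteration in these cylinders is an alternative route). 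In my view the main technical obstacle of the whole proof is precisely this: controlling the merely critically integrable solenoidal field $b$ uniformly in the cylinders shrinking onto $\Ga$, simultaneously with the degenerate weight natural along $\Ga$ and with the direction of $\Ga$ itself.

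Finally, a covering argument combines the three families of oscillation inequalities: choosing $\mu$ to be the smallest of the three exponents produced above, and $\mu<1-\frac3q$, the standard telescoping of the oscillation estimates yields $u\in C^\mu(\bar\Om)$ together with the bound \eqref{Holder_Estimate}; since every constant used depends only on $\al$, $q$, $\|b\|_{L^{2,1}_w(\Om)}$ (through the results of \cite{Chernobai_Shilkin_1} and the analysis at $\Ga$) and on the Lipschitz constant of $\cd\Om$, so do $\mu$ and $c$.
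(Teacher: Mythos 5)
Your overall reduction scheme --- global boundedness, then three families of local oscillation estimates (interior away from $\Ga$, near $\cd\Om$, on $\Ga$), assembled by a covering argument --- matches the paper in structure, and your treatment of the first three ingredients (truncation tests that kill the $\int_\Ga$ term, the solenoidal-drift De~Giorgi machinery of \cite{Chernobai_Shilkin_1}, half-balls at the boundary) is in substance what the paper does in Sections~3--5. The decisive step, oscillation decay at $x_0\in\Ga$, is where you diverge from the paper, and where there is a genuine gap.

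You correctly observe that the naive De~Giorgi density condition at $x_0\in\Ga$ is in danger because the set $\{u\le 0\}$ might a priori have measure zero. The paper's resolution (Proposition~\ref{Weak_Harnack}) is not a compactness argument but a direct, quantitative weak Harnack estimate that rescues exactly this density condition. After rescaling so that $v:=2\big(M(x_0,4R)-u^R\big)/\om(x_0,4R)\in[0,2]$ and $v|_\Ga\ge 1$, one tests the rescaled equation with a fixed cutoff $\eta\equiv 1$ on $B$. Integration by parts and the identity $-\Delta\ln(1/|x'|)=2\pi\dl_\Ga$ place the whole drift on $\eta$ and produce, on one side, the term $2\pi\al\int_\Ga v\eta\,dl$, whose modulus is $\ge\pi|\al|$ because $v\ge 1$ on $\Ga$ and $\eta=1$ on $B$ (after absorbing the small $\int g\cdot\nabla\eta$ contribution). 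The other side, $\int_{B_2}v(\Delta\eta+b\cdot\nabla\eta)\,dx$, is split over $\{v>\la_1\}$ and $\{v\le\la_1\}$ and is $\lesssim (1+\|b\|_{L_{p'}})\big(\dl_1^{1/p}+\la_1\big)$. Choosing $\dl_1,\la_1$ small yields a contradiction and hence the density $|\{u\le k_1\}\cap B_{2R}|\ge\dl_1|B_{2R}|$ with $k_1$ a fixed fraction of the oscillation, which feeds into the same De~Giorgi iteration (Propositions~\ref{Density_lemma}, \ref{Weak_Harnack_1}, Lemma~\ref{osc_lemma_curve}) as at an interior point. No compactness, no Liouville theorem, no barriers.

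Your proposed blow-up/compactness route for this step has two problems, of which the first is the more serious. The Liouville-type decay statement you invoke --- that a bounded weak solution of $-\Delta w+(b+\al x'/|x'|^2)\cdot\nabla w=0$ with $w|_\Ga=0$ decays like a fixed positive power of $|x'|$ --- is not easier than the oscillation estimate you want: after blow-up the solenoidal drift $b$ is not eliminated, it survives bounded only in $L^{2,1}_w$, and it can be just as singular along $\Ga$ as the potential part. The radial functions $|x'|^\la$ are sub-/supersolutions only for the pure potential drift $\al x'/|x'|^2$; there is no comparison principle against them in the presence of a general critical solenoidal field $b$, so the proposed Hardy-plus-cylinder-Caccioppoli-plus-barrier argument does not close. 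Second, the passage to the limit is not immediate: $L^{2,1}_w$-bounds on $b_m$ give no compactness, and without uniform $W^1_p$-control of the rescaled $u_m$ for some $p>2$ --- which is precisely what Theorem~\ref{Theorem_1} establishes, and only \emph{after} Theorem~\ref{Theorem_3} --- one cannot pass to the limit in the bilinear form $\int b_m\cdot\nabla u_m\,\eta$ by simple weak/strong pairing. The stream-function route you mention would need a separate argument to produce uniform higher integrability of $\nabla u_m$ under the rescaling, which you do not supply. The paper's direct argument avoids all of this.

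A minor remark: the paper proves the global $L_\infty$ bound (Theorem~\ref{Theorem_2}) via truncation and the level-set lemma of \cite[Ch.~II, Lemma~5.3]{LU} rather than Moser iteration; the two are interchangeable here, and your observation that the $\int_\Ga$ term vanishes for truncations $(u-k)_\pm$ with the appropriate sign of $k$ is exactly the point that makes this step insensitive to the sign of $\al$.
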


We emphasize that the H\" older exponent $\mu$ in Theorem \ref{Theorem_3} does not depend on $p$, so  in this theorem we need the assumption $p>2$ only to have the trace in \eqref{Zero_on_Gamma} to be well-defined.

\begin{theorem}\label{Theorem_1} Assume     $\Om\subset \Bbb R^3$ is a bounded Lipschitz domain containing the origin,  $b_0$ is given by \eqref{Drift_Specific} with $\al<0$, $b$  satisfies   \eqref{Assumption_b}, \eqref{Assumptions_Morrey_space_b} and assume $q>3$. Then there exists  $p>2$
depending only on    $\al$, $q$,   $\| b \|_{L^{2, 1}_{w}(\Om)}$  and the Lipschitz constant of $\cd \Om$ such
that  for any $f\in L_q(\Om)$  there exists a unique $p$-weak
solution $u$ to
  the problem \eqref{Equation} satisfying the condition \eqref{Zero_on_Gamma}
   Moreover, this solution satisfies the estimate
   \begin{equation}
\| u\|_{W^1_p(\Om)} \ \le \ c~\| f\|_{L_q(\Om)},
\label{Main_Estimate}
\end{equation}
 with a
constant $c>0$ depending only on   $\Om$, $\al$, $q$, $\|
b\|_{L^{2, 1}_{w}(\Om)} $ and the Lipschitz constant of $\cd \Om$.
\end{theorem}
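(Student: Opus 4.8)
\medskip
\noindent\emph{Proof plan.}
Uniqueness follows at once from Theorem~\ref{Theorem_3}. If $u_1$ and $u_2$ are a $p_1$- and a $p_2$-weak solution corresponding to the same $f\in L_q(\Om)$, both satisfying \eqref{Zero_on_Gamma}, then $w:=u_1-u_2$ lies in $\overset{\circ}{W}{^1_p}(\Om)$ with $p:=\min\{p_1,p_2\}>2$, is a $p$-weak solution with right-hand side $f\equiv0$, and still satisfies $w|_\Ga=0$; hence \eqref{Holder_Estimate} forces $\|w\|_{C^\mu(\bar\Om)}\le c\,\|0\|_{L_q(\Om)}=0$, i.e.\ $u_1=u_2$. (Equivalently: by Theorem~\ref{Theorem_3} the function $w$ is bounded, so $w\in\overset{\circ}{W}{^1_p}(\Om)\cap L_\infty(\Om)$ is admissible as a test function in \eqref{Identity}; in $\mathcal B[w,w]=\int_\Om w\,b\cdot\nabla w\,dx+\al\int_\Om w\,\nabla h\cdot\nabla w\,dx$ the first integral vanishes because $\div b=0$ (using $w^2\in\overset{\circ}{W}{^1_p}(\Om)$, $b\in L_{p'}(\Om)$), and the second, equal to $\pi\al\int_\Ga w^2\,dl_x$ by \eqref{h_is harmonic}, vanishes because $w|_\Ga=0$, leaving $\int_\Om|\nabla w|^2\,dx=0$.)

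For existence the plan is to regularize the singular part of the drift and pass to the limit. One puts $h_\ep(x):=-\tfrac12\ln(|x'|^2+\ep^2)$, so that $\nabla h_\ep=-x'/(|x'|^2+\ep^2)$ is bounded and $-\Delta h_\ep=2\ep^2/(|x'|^2+\ep^2)^2\ge0$ is a bounded approximation of $2\pi\dl_\Ga$ consistent with \eqref{h_is harmonic}, and replaces $b$ by its mollification $b_\ep:=b*\rho_\ep$, which remains divergence free and obeys $\|b_\ep\|_{L^{2,1}_w(\Om)}\le C\,\|b\|_{L^{2,1}_w(\Om)}$. For the bounded drift $b_0^\ep:=b_\ep+\al\nabla h_\ep$ the problem \eqref{Equation} is uniquely solvable: the operator $-\Delta+b_0^\ep\cdot\nabla$ has no zeroth-order term, so by the weak maximum principle its homogeneous Dirichlet problem admits only the trivial solution, and the Fredholm alternative then provides a unique $u_\ep\in\overset{\circ}{W}{^1_2}(\Om)$ with $\int_\Om\nabla u_\ep\cdot\nabla\eta\,dx+\int_\Om\eta\,b_0^\ep\cdot\nabla u_\ep\,dx=\int_\Om f\cdot\nabla\eta\,dx$ for every $\eta\in C_0^\infty(\Om)$; by De Giorgi--Nash--Moser up to the Lipschitz boundary $u_\ep\in C^{\mu_0}(\bar\Om)$ for some $\mu_0>0$.

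The core step is the bound $\|u_\ep\|_{C^\mu(\bar\Om)}+\|u_\ep\|_{W^1_p(\Om)}\le c\,\|f\|_{L_q(\Om)}$, with $\mu\in(0,1)$, $p>2$ and $c$ independent of $\ep\in(0,1)$ and depending only on $\al$, $q$, $\|b\|_{L^{2,1}_w(\Om)}$ and $\Om$; this is the regularized analogue of Theorem~\ref{Theorem_3} and should come out of the same iteration scheme, the one genuinely new point being to reproduce the decay of $u_\ep$ toward $\Ga$ uniformly in $\ep$. Since $\al<0$ the potential part $\al\nabla h_\ep$ of the drift points away from $\Ga$, and a direct computation shows that $|x'|^\mu$ is a supersolution of the regularized homogeneous equation in a fixed neighbourhood of $\Ga$ for every $\mu\in(0,-\al)$ (for $|x'|\gtrsim\ep$; nearer the axis the regularization keeps everything bounded). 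Comparing $u_\ep$ with $c\,\|f\|_{L_q}\,|x'|^\mu$ then yields $|u_\ep(x)|\le c\,\|f\|_{L_q}\,|x'|^\mu$, and this decay makes the ``bad'' contributions absorbable: for instance the only troublesome term in the energy identity is $\sim|\al|\int_\Om u_\ep^2\,(-\Delta h_\ep)\,dx$, which is concentrated in $\{|x'|\lesssim\ep\}$ and, by the decay, is $\le c\,\ep^{2\mu}\bigl(\sup_\Om|u_\ep|/|x'|^\mu\bigr)^2$, so after a short bootstrap in $\ep$ the energy and the $L_\infty$ bounds close. The $W^1_p$-bound with some $p>2$ is then obtained from a Caccioppoli inequality — in which the divergence-free part $b_\ep$ contributes only a harmless commutator and the singular term $\al\nabla h_\ep$ one such commutator plus a term of the favourable sign because $\al<0$ — followed by Gehring's lemma, interior and up to the Lipschitz boundary, the gain $p-2>0$ depending only on $\|b\|_{L^{2,1}_w(\Om)}$.

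Granted these uniform bounds, one extracts a subsequence with $u_\ep\rightharpoonup u$ weakly in $\overset{\circ}{W}{^1_p}(\Om)$ and, by Arzel\`a--Ascoli, $u_\ep\to u$ in $C^{\mu'}(\bar\Om)$ for every $\mu'<\mu$. Since $\nabla h_\ep\to\nabla h$ in $L_{p'}(\Om)$ (a direct estimate, available because $p'<2$), $b_\ep\to b$ in $L_{p'}$ on compact subsets of $\Om$, and $\nabla u_\ep\rightharpoonup\nabla u$ in $L_p(\Om)$, one may pass to the limit in the drift term $\int_\Om\eta\,b_0^\ep\cdot\nabla u_\ep\,dx\to\mathcal B[u,\eta]$, so $u$ satisfies \eqref{Identity} and is a $p$-weak solution of \eqref{Equation}; the decay estimate passes to the limit to give $|u(x)|\le c\,\|f\|_{L_q}\,|x'|^\mu$, whence the (continuous) limit $u$ satisfies $u|_\Ga=0$; and \eqref{Main_Estimate} is inherited by weak lower semicontinuity. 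The main obstacle is precisely the uniform estimate of the previous paragraph: the $u_\ep$ do \emph{not} vanish on $\Ga$, so the mechanism that renders the problem well posed when $\al<0$ — the favourable direction of the singular drift near $\Ga$ — has to be extracted quantitatively and uniformly in $\ep$, which is where the barrier construction and the De Giorgi--Moser iteration behind Theorem~\ref{Theorem_3} are indispensable. (Once existence is secured, \eqref{Main_Estimate} for the limit solution can also be obtained directly: by Theorem~\ref{Theorem_3} one has $|u(x)|\le c\|f\|_{L_q}|x'|^\mu$ and $u|_\Ga=0$, hence $u$ is a $p$-weak solution of $-\Delta u+b\cdot\nabla u=\div\bigl(f-\al u\,x'/|x'|^2\bigr)$ with right-hand side of $L_p(\Om)$-norm $\le c\|f\|_{L_q}$ for any $p<\min\{q,\tfrac2{1-\mu}\}$, and \eqref{Main_Estimate} follows from the $\overset{\circ}{W}{^1_p}$-solvability theory for the divergence-free critical drift $b$, with $p$ taken close enough to $2$.)
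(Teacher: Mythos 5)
Your uniqueness argument is correct and coincides with what the paper has in mind (either via Theorem~\ref{Theorem_3} with $f=0$, or directly via the energy identity and \eqref{Quadratic_Form}). For the existence part, however, you take a genuinely different route from the paper, and it is in that route that a serious gap remains.

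\emph{What the paper does.} The paper does not regularize the singular factor $x'/|x'|^2$. Instead it first solves (Proposition~\ref{prop7.1}, quoting the ``good sign'' theory of \cite{Chernobai_Shilkin_1}) an auxiliary problem with drift $-|\al|\,\tfrac{x'}{|x'|^2}$, which has $\div b_0\le 0$, obtaining a H\"older-continuous $p_1$-weak solution $v$. Then it performs the ``Darboux transform'' $u=|x'|^{|\al|}v$ (Proposition~\ref{prop7.2}); a direct computation turns the equation for $v$ into the wanted ``bad-sign'' equation \eqref{Equation_h} for $u$, and --- this is the crucial payoff --- the factor $|x'|^{|\al|}$ forces $u|_\Ga=0$ automatically, with $u\in W^1_p(\Om)$ for some $p>2$. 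After that the divergence-free part $b$ is added by a Leray--Schauder fixed point argument (Proposition~\ref{prop7.4}), and the last step of the proof of Theorem~\ref{Theorem_1} approximates the Lipschitz domain by smooth $\Om_k$ and mollifies $b$, keeping the constants under control via Proposition~\ref{L_p_estimate}. At no stage is one forced to prove $u_\ep|_\Ga\to 0$ for approximations that do not already vanish on $\Ga$.

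\emph{Where your route is incomplete.} The regularization $h_\ep=-\tfrac12\ln(|x'|^2+\ep^2)$, $b_\ep=b*\rho_\ep$, and solvability of the $\ep$-problem are fine. But the ``core step'' --- $\|u_\ep\|_{C^\mu(\bar\Om)}+\|u_\ep\|_{W^1_p(\Om)}\le c\,\|f\|_{L_q(\Om)}$ uniformly in $\ep$ --- is precisely what cannot be deduced from the machinery of Sections~\ref{Proof_T2}--\ref{Proof_T3} as they stand, because every ingredient there that controls $u$ near $\Ga$ (the vanishing of $\mathcal B[\eta,\eta]$ in Theorem~\ref{Theorem_2}, the weak Harnack of Proposition~\ref{Weak_Harnack}, Proposition~\ref{Weak_Harnack_1}) uses $u|_\Ga=0$ as an input, and $u_\ep|_\Ga\ne 0$. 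You recognize this (``the $u_\ep$ do \emph{not} vanish on $\Ga$''), but the barrier argument you propose to replace it has two concrete problems. First, $|x'|^\mu$ is \emph{not} a supersolution of $-\Delta w+\al\nabla h_\ep\cdot\nabla w=0$ in the region $|x'|\lesssim\ep$: there $-\Delta(|x'|^\mu)\sim -\mu^2|x'|^{\mu-2}$ dominates the regularized drift term, which is only $O(|x'|^\mu/\ep^2)$, and the expression has the wrong sign; so you need a modified barrier near the axis, which then fails to force decay of $u_\ep$ there. Second, even away from the axis the comparison requires $u_\ep\le C\,|x'|^\mu$ on $\{|x'|=\rho_0\}$, i.e.\ an $\ep$-uniform $L_\infty$ bound, whereas the $L_\infty$ bound via \eqref{L_infty_Estimate} is itself based on $u|_\Ga=0$; and the energy estimate cannot be closed without the decay, since $\mathcal B^\ep[u_\ep,u_\ep]=-\tfrac12\int u_\ep^2\,\div b_0^\ep\le 0$ with $\div b_0^\ep$ concentrating like $\ep^{-2}$. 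The ``short bootstrap in $\ep$'' you invoke is exactly the missing argument, and closing it would essentially require re-proving the whole of Theorem~\ref{Theorem_3} for solutions that do not vanish on $\Ga$, which is not obviously possible. The Darboux substitution is what lets the paper avoid this circle entirely, and it is the step your proposal does not replace.

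A smaller remark: you don't address the Lipschitz-domain issue in your Gehring step. The paper handles it by choosing $C^2$ approximating domains $\Om_k$ with uniformly controlled Lipschitz/density constants \eqref{Oblast'_k}; some analogue of this is needed in your scheme as well, on top of the $\ep$-regularization of the drift.
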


Note  that the condition \eqref{Zero_on_Gamma} is essential for the uniqueness in Theorem \ref{Theorem_1}, see the example of non-uniqueness for the Dirichlet problem in 2D case, for example,  in \cite{Chernobai_Shilkin}. Note also that for sufficiently regular drift $b_0$ the uniqueness for the Dirichlet problem \eqref{Equation} holds even without the condition \eqref{Zero_on_Gamma} as it follows from the maximum principle for the problem \eqref{Equation}, see, for example, \cite{Filonov_Shilkin} for the further discussion.

So, our main  message is: if the drift $b_0$ in \eqref{Equation} is singular  then the   ``bad'' sign of its divergence \eqref{Assumptions_b_bad_sign} can ruin the uniqueness for the problem \eqref{Equation}. But if the drift is singular along a particular curve $\Ga$ (and the singular part of the drift is harmonic away from this curve) then  the additional condition  \eqref{Zero_on_Gamma} (compensating  the singularity of the drift) provides  the existence and uniqueness in the class of $p$-weak solutions as well as guarantees some other ``good'' properties of solutions such as the H\" older continuity in Theorem \ref{Theorem_3} (see also the Liouville  property in   \cite{KNSS} and \cite{NU}). From our  point of view the most interesting result in Theorem \ref{Theorem_1} is the existence of solutions satisfying the condition \eqref{Zero_on_Gamma} as their  uniqueness     follows directly from the energy identity.

There are many papers devoted to the  investigation of the problem \eqref{Equation} in the case of divergence free drift, \cite{AD},  \cite{Tomek},
\cite{Filonov},
\cite{Filonov_Shilkin}, \cite{Fri_Vicol}, \cite{Lis_Zhang},  \cite{MV},
\cite{NU}, \cite{SSSZ}, \cite{Vicol}, \cite{Zhang},
\cite{Zhikov} and references there. Papers devoted to the
non-divergence free drifts are not so numerous (see  \cite{BO}, \cite{B}, \cite{Chernobai_Shilkin}, \cite{Kang_Kim},
\cite{Kim_Kim}, \cite{Kim_Kwon}, \cite{Tsai}, \cite{Krylov_1}, \cite{Krylov_2}, \cite{Krylov_3}, \cite{Kwon_1}, \cite{Kwon_2}, \cite{NU} for the references). Our  present  contribution  can be viewed as a 3D analogue of the results obtained earlier in \cite{Chernobai_Shilkin} in the 2D case.

Our paper is organized as follows. In Section \ref{Auxiliarly_Section} we introduce some auxiliary results and derive the estimates of the bilinear form which are based on Fefferman \cite{Fefferman} and  Chiarenza and Frasca \cite{Chia_Fra}  inequalities. In Section \ref{Proof_T2} we prove  a priori global boundedness of $p$--weak solutions to the problem \eqref{Equation} satisfying the condition \eqref{Zero_on_Gamma}. Note that this  result holds  for a supercritical drift $b$, but it is  heavily based on the boundary conditions in \eqref{Equation} and has not its analogue in the local setting, see the discussion in \cite{Filonov_Shilkin}. In Section  \ref{DG_section} we adopt the  De Giorgi technique to investigation of the problem with the singular drift of type \eqref{Drift_Specific}. The basic assumption which allows us to use the De Giorgi technique in more or less standard way is the density condition \eqref{Density_condition}. To show the validity of this condition for the points on the singular curve $\Ga$ we follow the method developed in \cite{CSTY}, see also \cite{Tsai_Book}. In Section \ref{Proof_T3} we prove Theorem \ref{Theorem_3}. Finally, in Section \ref{Proof_T1} we prove Theorem \ref{Theorem_1}.

\medskip

In the paper we use the following notation. For any $a$, $b\in
\mathbb  R^n$ we denote by $a\cdot b = a_k b_k$ their  scalar product in
$\mathbb R^n$. Repeated indexes assume the summation from 1 to $n$. An index after comma means partial derivative with respect to $x_k$, i.e. $f_{,k}:=\frac{\cd f}{\cd x_k}$.  We denote by $L_p(\Omega)$ and $W^k_p(\Omega)$ the
usual Lebesgue and Sobolev spaces. We do not distinguish between functional spaces of scalar and vector functions and omit the target space in notation.  $C_0^\infty(\Om)$ is the space of
smooth functions compactly supported in $\Om$. The space
$\overset{\circ}{W}{^1_p}(\Omega)$ is the closure of
$C_0^\infty(\Omega)$ in $W^1_p(\Omega)$ norm and $W^{-1}_p(\Om)$ is the dual space for $\overset{\circ}{W}{^1_{p'}}(\Omega)$, $p'=\frac p{p-1}$.  The space of
distributions on $\Om$ is denoted by $\mathcal D'(\Om)$. By  $C^\mu(\bar \Omega)$, $\mu\in (0,1)$ we denote
the spaces of   H\" older continuous functions on $\bar
\Omega$.
 The symbols $\rightharpoonup$ and $\to $ stand for the
weak and strong convergence respectively. We denote by $B_R(x_0)$
the ball in $\mathbb R^n$ of radius $R$ centered at $x_0$ and write
$B_R$ if $x_0=0$. We write   $B$ instead of $B_1$ and denote  $S_1:= \cd B_1$. For a domain $\Om\subset \Bbb R^n$ we also denote $\Om_R(x_0):=\Om\cap B_R(x_0)$.
 For $u\in L_\infty(\om)$ we denote   $$\operatorname{osc}\limits_{\om} u \,  := \, \esssup\limits_{\om}u - \operatorname{essinf}\limits_{\om}u.$$
We denote by $L^{p,\la}(\Om)$ the Morrey space equipped with the norm
$$
\| u \|_{L^{p, \la}(\Om)} \ := \ \sup\limits_{x_0\in \Om}\sup\limits_{R<\operatorname{diam}\Om}R^{-\frac \la p}\| u\|_{L_p( \Om_R(x_0))}.
$$
$(f)_\om$ stands for the average of $f$ over the domain $\om\subset \Bbb R^n$:
$$
(f)_\om \ := \ -\!\!\!\!\!\!\int\limits_\om f~dx \ = \ \frac
1{|\om|}~\int\limits_\om f~dx.
$$

\medskip\noindent
{\bf Acknowledgement.} This research  of Tim Shilkin was partly done during his stay  at the Max Planck Institute for Mathematics in the Sciences (MiS) in Leipzig in 2023. The author thanks  MiS for the hospitality. The research of Misha Chernobai was partially supported by Natural Sciences and Engineering Research Council of Canada (NSERC) grant RGPIN-2023-04534.

\newpage
\section{Auxiliary results}\label{Auxiliarly_Section}
\setcounter{equation}{0}

\bigskip
In this section we present several auxiliary results. Within  this section we assume that $\Om \subset \Bbb R^n$ is a bounded domain for arbitrary $n\ge 2$. The first result shows that by relaxing  an exponent of the integrability of a function we can always switch  from a weak Morrey norm   to a regular one.

\begin{proposition}\label{Holder_inequality}
  For any $p\in (1,n)$ and $1\le q<p\le r\le n$  there are positive constants $c_1$ and $c_2$ depending only on $n$, $p$, $q$ and $\Om$ such that
  $$
 c_1  \, \| b\|_{ L^{q,n-q}(\Om)} \, \le \,  \| b\|_{ L^{p,n-p}_w(\Om)}\, \le \,  c_2  \, \| b\|_{ L^{r,n-r}(\Om)}.
  $$
\end{proposition}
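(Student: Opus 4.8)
The plan is to prove the two inequalities separately, since they are essentially independent chains of elementary embeddings between (weak) Morrey and Lebesgue spaces with the specific scaling exponent $\lambda = n-p$.

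\textbf{Left inequality:} $c_1\|b\|_{L^{q,n-q}(\Om)}\le \|b\|_{L^{p,n-p}_w(\Om)}$ for $q<p$. Here I would work ball by ball: fix $x_0\in\Om$ and $r<\operatorname{diam}\Om$ and estimate $\|b\|_{L_q(\Om_r(x_0))}$ in terms of $\|b\|_{L_{p,w}(\Om_r(x_0))}$. The key point is that on a set of finite measure, the weak-$L_p$ quasinorm controls the $L_q$ norm for $q<p$: writing $M:=\|b\|_{L_{p,w}(\Om_r(x_0))}$ one has $|\{x\in\Om_r(x_0):|b(x)|>s\}|\le \min\{|\Om_r(x_0)|,\ M^p s^{-p}\}$, and then by the layer-cake formula
\[
\int\limits_{\Om_r(x_0)}|b|^q\,dx \ = \ q\int\limits_0^\infty s^{q-1}\,|\{x\in\Om_r(x_0):|b(x)|>s\}|\,ds,
\]
split the $s$-integral at the threshold $s_* = M\,|\Om_r(x_0)|^{-1/p}$ and bound each piece; this yields $\|b\|_{L_q(\Om_r(x_0))}\le c(n,p,q)\,|\Om_r(x_0)|^{\frac1q-\frac1p}\,M$. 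Since $|\Om_r(x_0)|\le c_n r^n$, this gives $|\Om_r(x_0)|^{\frac1q-\frac1p}\le c\,r^{n/q-n/p}$, so that
\[
r^{-\frac{n-q}{q}}\|b\|_{L_q(\Om_r(x_0))} \ \le \ c\,r^{-\frac{n-q}{q}}\,r^{\frac nq-\frac np}\,M \ = \ c\,r^{-\frac{n-p}{p}}\|b\|_{L_{p,w}(\Om_r(x_0))},
\]
using that $-\tfrac{n-q}{q}+\tfrac nq-\tfrac np = 1-\tfrac np = -\tfrac{n-p}{p}$. Taking the supremum over $x_0$ and $r$ finishes this half. (If one wants the cleanest bookkeeping one can also just invoke the standard embedding $L_{p,w}(E)\hookrightarrow L_q(E)$ on finite-measure sets $E$ with constant $\sim |E|^{1/q-1/p}$, which is exactly what the layer-cake computation produces.)

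\textbf{Right inequality:} $\|b\|_{L^{p,n-p}_w(\Om)}\le c_2\|b\|_{L^{r,n-r}(\Om)}$ for $p\le r$. Again work ball by ball. First, the trivial pointwise domination $\|\cdot\|_{L_{p,w}(E)}\le \|\cdot\|_{L_p(E)}$ reduces matters to comparing $\|b\|_{L^{p,n-p}(\Om)}$ with $\|b\|_{L^{r,n-r}(\Om)}$, i.e. to the inclusion of ordinary Morrey spaces $L^{r,n-r}\hookrightarrow L^{p,n-p}$ when $p\le r$. For that, apply H\"older's inequality on $\Om_r(x_0)$ with exponents $\tfrac rp$ and its conjugate:
\[
\|b\|_{L_p(\Om_r(x_0))} \ \le \ |\Om_r(x_0)|^{\frac1p-\frac1r}\,\|b\|_{L_r(\Om_r(x_0))} \ \le \ c\,r^{\,n(\frac1p-\frac1r)}\,\|b\|_{L_r(\Om_r(x_0))}.
\]
Multiplying by $r^{-\frac{n-p}{p}}$ and checking the exponent arithmetic $-\tfrac{n-p}{p}+n(\tfrac1p-\tfrac1r) = -\tfrac{n-r}{r}$ gives $r^{-\frac{n-p}{p}}\|b\|_{L_p(\Om_r(x_0))}\le c\,r^{-\frac{n-r}{r}}\|b\|_{L_r(\Om_r(x_0))}$, and taking suprema completes the proof.

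There is no real obstacle here: both halves are a combination of the layer-cake representation / H\"older's inequality with the elementary fact $|\Om_r(x_0)|\le c_n r^n$ and a bit of exponent arithmetic; the only thing to be careful about is tracking how the constants depend on $n,p,q$ (and on $\operatorname{diam}\Om$ through the restriction $r<\operatorname{diam}\Om$, or on $|\Om|$), and noting that the case $r=p$ of the right inequality is just the trivial domination $\|\cdot\|_{L_{p,w}}\le\|\cdot\|_{L_p}$. The constraints $1\le q<p<n$ and $p\le r\le n$ are used only to guarantee that all the exponents appearing (in particular $\tfrac1q-\tfrac1p>0$ and $\tfrac1p-\tfrac1r\ge0$) have the right sign; strictly, $r\le n$ is not needed for these two estimates but it is the natural range for the scaling $\lambda=n-r\ge 0$.
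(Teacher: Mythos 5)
Your proof is correct, and it is considerably more detailed than the paper's, which simply cites the H\"older inequality for Lorentz norms from Grafakos (Section 4.1) without writing anything out. The two arguments are really the same inequality in two guises: your layer-cake computation shows $\|b\|_{L_q(E)}\le c(p,q)\,|E|^{1/q-1/p}\|b\|_{L_{p,w}(E)}$ on sets of finite measure, and this is exactly what one gets by applying the Lorentz-H\"older inequality $\|b\cdot\chi_E\|_{L^{q,q}}\le c\,\|b\|_{L^{p,\infty}}\|\chi_E\|_{L^{s,q}}$ with $1/s=1/q-1/p$ and $\|\chi_E\|_{L^{s,q}}\simeq|E|^{1/s}$; the right half is ordinary H\"older in both treatments. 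So you buy self-containedness at the cost of a longer computation. One small bookkeeping point worth flagging: in the paper the weak Morrey quasinorm $\|\cdot\|_{L^{p,\la}_w(\Om)}$ is defined with $\sup_{r<1}$ while the ordinary Morrey norm $\|\cdot\|_{L^{p,\la}(\Om)}$ uses $\sup_{R<\operatorname{diam}\Om}$, so the two suprema run over different ranges when $\operatorname{diam}\Om\ne 1$. For the right inequality this is harmless (the larger supremum sits on the right). For the left inequality, if $\operatorname{diam}\Om>1$ your pointwise bound $r^{-(n-q)/q}\|b\|_{L_q(\Om_r(x_0))}\le c\,r^{-(n-p)/p}\|b\|_{L_{p,w}(\Om_r(x_0))}$ holds for all $r$, but for $1\le r<\operatorname{diam}\Om$ the right-hand side is not directly one of the quantities appearing in $\|b\|_{L^{p,n-p}_w(\Om)}$; one should either note that $r^{-(n-p)/p}\le 1$ there and cover $\Om$ by finitely many balls of radius $1/2$ (using subadditivity of $\|\cdot\|_{L_{p,w}}^p$), or simply absorb the discrepancy into the $\Om$-dependent constant, which is what the statement allows. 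You do gesture at the $\Om$-dependence, so this is a presentation detail rather than a gap, but it would be worth making the covering step explicit.
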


\begin{proof}
  The result follows from the H\" older inequality for Lorentz norms, see
\cite[Section 4.1]{Garfakos}.
\end{proof}

The next result is the estimate of the quadratic form corresponding to the drift term satisfying \eqref{Drift_Specific} and \eqref{Assumption_b}.

\begin{proposition}\label{Q_Form} Assume $p>2$, $p'=\frac p{p-1}$ and $b_0$ is given by \eqref{Drift_Specific} with $b\in L_{p'}(\Om)$ satisfying  \eqref{Assumption_b}.
Then for any $\al\in \Bbb R$ and any $u\in
 \overset{\circ}{W}{^1_p}(\Om)  $
the
 bilinear form
$\mathcal B [u,\eta]$ defined in \eqref{Bilinear_Form} satisfies the identity
\begin{equation}
\mathcal B [u,u] \ = \   \pi \al \,
 \int\limits_{\Ga}|u(x)|^2~dl_x
\label{Quadratic_Form}
\end{equation}
where the integral in the right-hand side is understood in the sense of traces.
\end{proposition}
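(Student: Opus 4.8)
The plan is to verify \eqref{Quadratic_Form} first for $u\in C_0^\infty(\Om)$ by an elementary integration by parts, and then to obtain the general case by density of $C_0^\infty(\Om)$ in $\overset{\circ}{W}{^1_p}(\Om)$.

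\medskip\noindent\emph{The smooth case.} Let $u\in C_0^\infty(\Om)$; then $u^2\in C_0^\infty(\Om)$, $u\,b_0\cdot\nabla u=\tfrac12\,b_0\cdot\nabla(u^2)$ pointwise, and since $b_0\in L_{p'}(\Om)\subset L_1(\Om)$ while $\nabla(u^2)\in L_\infty(\Om)$ the integral defining $\mathcal B[u,u]$ converges absolutely. By the Helmholtz decomposition \eqref{Helmgoltz_decomposition}, \eqref{Typical_h},
\[
\mathcal B[u,u]=\tfrac12\int_\Om b\cdot\nabla(u^2)\,dx+\tfrac{\al}{2}\int_\Om\nabla h\cdot\nabla(u^2)\,dx .
\]
The first integral vanishes because $\div b=0$ in $\mathcal D'(\Om)$ and $u^2\in C_0^\infty(\Om)$. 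For the second one, \eqref{h_is harmonic} says precisely that $\int_\Om\nabla h\cdot\nabla\ph\,dx=2\pi\int_\Ga\ph\,dl_x$ for every $\ph\in C_0^\infty(\Om)$ (the integration by parts $-\int_\Om h\,\Delta\ph\,dx=\int_\Om\nabla h\cdot\nabla\ph\,dx$ being legitimate since $h,\nabla h\in L_{1,\mathrm{loc}}(\Bbb R^3)$); taking $\ph=u^2$ yields $\int_\Om\nabla h\cdot\nabla(u^2)\,dx=2\pi\int_\Ga u^2\,dl_x$. Hence $\mathcal B[u,u]=\pi\al\int_\Ga u^2\,dl_x$ for $u\in C_0^\infty(\Om)$.

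\medskip\noindent\emph{Passage to the limit.} Let $u_k\in C_0^\infty(\Om)$, $u_k\to u$ in $W^1_p(\Om)$. On the right, the trace operator $W^1_p(\Om)\to W^{1-2/p}_p(\Ga)$ is bounded, and since $p>2$ and $\Ga$ is one-dimensional, $W^{1-2/p}_p(\Ga)\hookrightarrow L_2(\Ga)$ (because $\tfrac1p-(1-\tfrac2p)=\tfrac3p-1<\tfrac12$); hence $\int_\Ga u_k^2\,dl_x\to\int_\Ga u^2\,dl_x$. On the left one needs the continuity of $u\mapsto\mathcal B[u,u]$ on $\overset{\circ}{W}{^1_p}(\Om)$; given a bilinear continuity bound, it follows from the identity $\mathcal B[u_k,u_k]-\mathcal B[u,u]=\mathcal B[u_k-u,u_k]+\mathcal B[u,u_k-u]$. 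For the contribution of $\al\nabla h$ the bound comes from the Hardy inequality relative to $\operatorname{dist}(\cdot,\Ga)=|x'|$: as $\Ga$ has codimension $2$ in $\Bbb R^3$ and $p'<2$, one has $\big\|\,|x'|^{-1}w\,\big\|_{L_{p'}(\Om)}\le C\|\nabla w\|_{L_{p'}(\Om)}\le C'\|\nabla w\|_{L_p(\Om)}$ for $w\in\overset{\circ}{W}{^1_p}(\Om)$, so $\big|\al\int_\Om w\,\nabla h\cdot\nabla v\,dx\big|\le C|\al|\,\|\nabla w\|_{L_p}\|\nabla v\|_{L_p}$. For the contribution of the divergence-free field $b$ one invokes the estimates of the bilinear form established in this section from the Fefferman \cite{Fefferman} and Chiarenza--Frasca \cite{Chia_Fra} inequalities, recalling that $b\in L^{2,1}_w(\Om)$ by \eqref{Assumptions_Morrey_space_b}. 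Letting $k\to\infty$ in $\mathcal B[u_k,u_k]=\pi\al\int_\Ga u_k^2\,dl_x$ gives \eqref{Quadratic_Form}.

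\medskip\noindent\emph{Main obstacle.} The smooth computation is routine; the substance is the limit step, where the only delicate point is the continuity (indeed, already the finiteness) of $u\mapsto\mathcal B[u,u]$ on $\overset{\circ}{W}{^1_p}(\Om)$. Near $\Ga$ this is the Hardy inequality and works precisely because $p>2$ — for $p'\ge2$ one would instead need $u|_\Ga=0$, which is exactly the role played by \eqref{Zero_on_Gamma} in the main theorems — while away from $\Ga$ it is the Morrey/Fefferman bound for $b$; the trace term on the right is harmless thanks to $W^{1-2/p}_p(\Ga)\hookrightarrow L_2(\Ga)$, again since $p>2$.
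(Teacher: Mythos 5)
Your proposal follows the same scheme as the paper's two-sentence proof: verify \eqref{Quadratic_Form} for $u\in C_0^\infty(\Om)$ by integration by parts using $\div b=0$ and $-\Delta h=2\pi\dl_\Ga$, then pass to a general $u\in\overset{\circ}{W}{^1_p}(\Om)$ by density. The extra detail you supply on the limit step is useful, because the paper only invokes continuity of the trace operator $W^1_p(\Om)\to L_p(\Ga)$ (which, with the embedding $L_p(\Ga)\hookrightarrow L_2(\Ga)$, handles the right-hand side) and leaves the continuity of $u\mapsto\mathcal B[u,u]$ on $\overset{\circ}{W}{^1_p}(\Om)$ unaddressed. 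You correctly identify this as the crux of the limit step and treat the $\al\nabla h$ contribution via the Hardy inequality relative to $|x'|$, valid precisely because $p'<2$ equals the codimension of $\Ga$.

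One caveat: for the $b$-contribution you appeal to the Fefferman/Chiarenza--Frasca estimates and ``recall that $b\in L^{2,1}_w(\Om)$'', but this Morrey bound is \emph{not} a hypothesis of this proposition, which assumes only $b\in L_{p'}(\Om)$ and $\div b=0$. With $b\in L_{p'}(\Om)$ alone, the bilinear bound $\bigl|\int_\Om w\,b\cdot\nabla v\,dx\bigr|\le C\,\|\nabla w\|_{L_p}\|\nabla v\|_{L_p}$ fails for $2<p<3$ (Sobolev plus H\"older would require $p\ge 3$), so the limit in the $b$-term is not automatic. Within the stated hypotheses, a cleaner route for that piece is to note that, since $b\in L_{p'}$ and $\div b=0$ in $\mathcal D'(\Om)$, one has $\int_\Om b\cdot\nabla\ph\,dx=0$ for every $\ph\in\overset{\circ}{W}{^1_p}(\Om)$, hence $\int_\Om u\,b\cdot\nabla u\,dx=\tfrac12\int_\Om b\cdot\nabla(u^2)\,dx=0$ whenever $u^2\in\overset{\circ}{W}{^1_p}(\Om)$ — which is guaranteed if $u\in\overset{\circ}{W}{^1_p}(\Om)\cap L_\infty(\Om)$. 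For unbounded $u\in W^1_p$ with $p$ close to $2$ the $b$-term of $\mathcal B[u,u]$ need not even converge absolutely, so the proposition is slightly loose as stated; this gap is present in the paper's own one-line proof as well, and it is harmless in the applications there, where $u$ is always bounded (Theorem~3.1) and $b$ does lie in $L^{2,1}_w(\Om)$.
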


\begin{proof}   For a smooth function  $u \in C_0^\infty(\Om)$  the relation \eqref{Quadratic_Form} follows by integration by parts. For an arbitrary function $u\in  \overset{\circ}{W}{^1_p}(\Om)  $ with   $p>2$ the corresponding relation follows from the continuity of the trace operator from $W^1_p(\Om)$ to $ L_p(\Ga)$.
\end{proof}

The next proposition proved by Chiarenza and Frasca in \cite{Chia_Fra} is the well-known extension of the result  of C.~Fefferman \cite{Fefferman} for $p=2$.
This theorem is one of basic tools in our proofs of both Theorems  \ref{Theorem_3} and \ref{Theorem_1}.

\medskip
\begin{proposition}
   \label{Morrey_usual}
   Assume $p\in (1, n)$, $r\in (1, \frac np]$ and  $V \in L^{r,n-pr} (\Om)$. Then
   $$
 \int\limits_\Om |V|\, |u|^p\, dx   \ \le \ c_{n,r,p}\, \| V\|_{L^{r,n-rp}(\Om)} \| \nabla u\|_{L_p(\Om)}^p , \qquad \forall\, u \in C_0^\infty(\Om).
   $$
  with the constant $c_{n,r,p}>0$ depending only on $n$, $r$ and $p$.

\end{proposition}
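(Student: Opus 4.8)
The plan is to reduce the inequality to a weighted estimate for the Riesz potential $I_1f(x):=\int_{\BR^n}|x-y|^{1-n}f(y)\,dy$. For $u\in C_0^\infty(\Om)$, extended by zero to $\BR^n$, the usual integral representation of $u$ through $\nabla u$ gives the pointwise bound $|u(x)|\le c_n\,I_1(|\nabla u|)(x)$; hence, writing $g:=|\nabla u|\in L_p(\BR^n)$ and $d\mu:=|V|\,dx$, it suffices to prove
\[
\int_{\Om}(I_1g)^p\,d\mu\ \le\ c_{n,r,p}\,\|V\|_{L^{r,n-rp}(\Om)}\,\|g\|_{L_p(\Om)}^{\,p}.
\]
A useful preliminary remark is that, by H\"older's inequality on a ball, the Morrey hypothesis on $V$ already implies the growth bound $\mu\bigl(B_\rho(x_0)\bigr)\le\|V\|_{L_r(B_\rho(x_0))}\,|B_\rho(x_0)|^{1/r'}\le c\,\|V\|_{L^{r,n-rp}(\Om)}\,\rho^{\,n-p}$ for every ball, which is exactly the scaling carried by the target estimate.

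The main classical tool is the Adams--Hedberg decomposition: for every $\delta>0$,
\[
I_1g(x)\ \le\ c_n\Bigl(\,\delta\,Mg(x)\ +\ \int_{|x-y|\ge\delta}|x-y|^{1-n}g(y)\,dy\,\Bigr),
\]
where $M$ is the Hardy--Littlewood maximal operator and the first term comes from summing over dyadic annuli. Since $p<n$, H\"older's inequality bounds the tail by $c\,\delta^{1-n/p}\|g\|_{L_p(\BR^n)}$, and optimizing in $\delta$ gives the Hedberg estimate $I_1g(x)\le c\,\bigl(Mg(x)\bigr)^{1-p/n}\|g\|_{L_p}^{p/n}$. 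For the endpoint $r=n/p$ the weight space is simply $L_{n/p}(\Om)$, and the proof then reduces to the Sobolev embedding $\|u\|_{L_{np/(n-p)}(\Om)}\le c\,\|\nabla u\|_{L_p(\Om)}$ together with H\"older's inequality (note $(n/p)'=n/(n-p)$):
\[
\int_{\Om}|V|\,|u|^p\,dx\ \le\ \|V\|_{L_{n/p}(\Om)}\,\|u\|_{L_{np/(n-p)}(\Om)}^{\,p}\ \le\ c\,\|V\|_{L_{n/p}(\Om)}\,\|\nabla u\|_{L_p(\Om)}^{\,p}.
\]

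The substantive range is $r<n/p$, in which the pointwise bound above no longer suffices and one must retain the full Morrey structure of $V$, not merely its growth on balls. The plan here is to carry out the Adams argument quantitatively inside the Morrey scale: decompose $\BR^n$ into a grid of cubes whose sidelength is tied to the level $t$ in the layer-cake identity $\int_{\Om}(I_1g)^p\,d\mu=p\int_0^\infty t^{p-1}\mu\bigl(\{I_1g>t\}\bigr)\,dt$, estimate $\mu$ of each level set cube-by-cube by $\|V\|_{L^{r,n-rp}(\Om)}$ times the appropriate power of the sidelength, and recombine by H\"older's inequality for series; this is essentially the route that also establishes the boundedness of $M$ on the Morrey space $L^{p,n-rp}$ (the key lemma of \cite{Chia_Fra}) and the corresponding Adams embedding for $I_1$. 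I expect the main obstacle to be exactly this recombination step: passing from the local maximal-function estimates to a single global inequality with a constant depending only on $n,r,p$, i.e. scale invariant and independent of $\Om$. Since this is precisely the theorem of Chiarenza and Frasca (and of C.~Fefferman for $p=2$), it is natural to quote \cite{Chia_Fra}, \cite{Fefferman} rather than to reproduce the full argument.
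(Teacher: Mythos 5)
The paper gives no proof of Proposition \ref{Morrey_usual}; it simply cites Chiarenza--Frasca \cite{Chia_Fra} (and Fefferman \cite{Fefferman} for $p=2$), which is exactly the conclusion you reach at the end. Your preliminary sketch — reduction via $|u|\le c_n I_1(|\nabla u|)$, the growth bound $\mu(B_\rho)\le c\,\|V\|_{L^{r,n-rp}}\rho^{n-p}$, the Hedberg interpolation handling the endpoint $r=n/p$, and the layer-cake/maximal-function argument in the Morrey scale for $r<n/p$ — is a faithful outline of the Chiarenza--Frasca argument itself, so the proposal is correct and consistent with the paper's (purely referential) treatment.
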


Our next result is the estimate of the bilinear form corresponding to the drift term. This result is a direct consequence of  Proposition \ref{Morrey_usual}.

\medskip
\begin{proposition}
   \label{Morrey_Compactness}
   Assume  $r\in \left( \frac {2n}{n+2}, 2\right) $ and  $b \in L^{r, n-r} (\Om)$. Then  there exists  $c>0$ depending only on $n$ and  $r$  such that    for any $u \in W^1_2(\Om)$ and    $\zeta \in C_0^\infty(\Om)$  the following estimate   holds:
   \begin{equation}\label{Morrey_higher_integrability}
  \int\limits_\Om \zeta^3 \, |b|\, |u |^2\, dx \ \le \ c\, \| b\|_{L^{r, n-r }(\Om)}  \,   \|\nabla (\zeta^2 u)\|_{L_2(\Om)} \,   \|\nabla (\zeta u)\|_{L_{\frac {2n}{n+2}}(\Om)}.
   \end{equation}
   Moreover, for any   $\theta\in \left(\frac nr -\frac n2,1\right)$   there exists  $c>0$ depending only on $n$, $r$ and $\theta$  such that    for any $u \in W^1_2(\Om)$ and  any $\zeta \in C_0^\infty(\Om)$  satisfying $0\le \zeta \le 1$ we have
  \begin{equation}\label{Morrey_Compactness_estimate}
 \int\limits_\Om \zeta^{1+\theta} \, |b|\, |u |^2\, dx \ \le \ c \, \| b\|_{L^{r, n-r }(\Om)} \|   u \|_{L_2(\Om)}^{1-\theta}   \|\nabla (\zeta u)\|_{L_2(\Om)}^{1+\theta}\, |\Om|^{\theta/n}.
  \end{equation}
  If we additionally assume $u\in \overset{\circ}{W}{^1_2}(\Om)$ then the estimates \eqref{Morrey_higher_integrability} and \eqref{Morrey_Compactness_estimate} remains true for an arbitrary $\zeta \in C_0^\infty(\Bbb R^n)$.
\end{proposition}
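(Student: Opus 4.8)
The plan is to deduce both inequalities from the Chiarenza--Frasca estimate (Proposition \ref{Morrey_usual}), applied not to $|b|$ itself but to a suitable power $|b|^{\ell}$ whose exponent is dictated by one Hölder splitting that reshapes the cut-off factor; I carry this out for $n\ge3$, where the exponents below are admissible, and I may assume $\zeta\ge0$. The one preliminary fact used repeatedly is elementary: if $\ell>0$, $p\ge\ell$ and $s\in(1,n/p]$ satisfies $\ell s\le r$, then
$$\bigl\|\,|b|^{\ell}\bigr\|_{L^{s,\,n-ps}(\Om)}\ \le\ c\,|\Om|^{(p-\ell)/n}\,\|b\|_{L^{r,n-r}(\Om)}^{\ell},$$
obtained by Hölder's inequality inside each ball $\Om_R(x_0)$ together with $\|b\|_{L_r(\Om_R(x_0))}\le R^{\,n/r-1}\|b\|_{L^{r,n-r}(\Om)}$; the surplus power of the radius works out to $R^{\,p-\ell}$, which both explains the volume factor when $p>\ell$ and shows it is absent when $p=\ell$.

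For \eqref{Morrey_higher_integrability} I would write $\zeta^{3}|u|^{2}=(\zeta^{2}u)(\zeta u)$ and split by Hölder's inequality with the Sobolev-conjugate pair $\bigl(\tfrac{2n}{n+2},\tfrac{2n}{n-2}\bigr)$,
$$\int_{\Om}\zeta^{3}|b|\,|u|^{2}\,dx\ \le\ \bigl\|\,|b|\,\zeta u\,\bigr\|_{L_{2n/(n+2)}(\Om)}\;\bigl\|\zeta^{2}u\bigr\|_{L_{2n/(n-2)}(\Om)}.$$
The second factor is bounded by $c_{n}\|\nabla(\zeta^{2}u)\|_{L_{2}(\Om)}$ by the Sobolev inequality (legitimate as $\zeta^{2}u\in\WWW{^1_2}(\Om)$). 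Raising the first factor to the power $\tfrac{2n}{n+2}$ and applying Proposition \ref{Morrey_usual} with exponent $p=\tfrac{2n}{n+2}$ and potential $V=|b|^{2n/(n+2)}$, one has $\ell=p=\tfrac{2n}{n+2}$, so no volume factor appears; an admissible auxiliary exponent $s\in(1,\tfrac{n+2}{2}]$ with $\tfrac{2n}{n+2}\,s\le r$ exists precisely because $r>\tfrac{2n}{n+2}$, and the preliminary fact gives $\bigl\||b|\,\zeta u\bigr\|_{L_{2n/(n+2)}}\le c\,\|b\|_{L^{r,n-r}}\,\|\nabla(\zeta u)\|_{L_{2n/(n+2)}}$. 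Multiplying the two bounds yields \eqref{Morrey_higher_integrability}.

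For \eqref{Morrey_Compactness_estimate} I would instead peel off $\|u\|_{L_{2}}$ at the outset: from $\zeta^{1+\theta}|u|^{2}=|\zeta u|^{1+\theta}|u|^{1-\theta}$ and Hölder's inequality with exponents $\bigl(\tfrac{2}{1+\theta},\tfrac{2}{1-\theta}\bigr)$,
$$\int_{\Om}\zeta^{1+\theta}|b|\,|u|^{2}\,dx\ \le\ \Bigl(\int_{\Om}|b|^{\frac{2}{1+\theta}}|\zeta u|^{2}\,dx\Bigr)^{\frac{1+\theta}{2}}\,\|u\|_{L_{2}(\Om)}^{1-\theta},$$
and Proposition \ref{Morrey_usual} with $p=2$ and $V=|b|^{2/(1+\theta)}$ bounds the first integral by $c\,\bigl\||b|^{2/(1+\theta)}\bigr\|_{L^{s,n-2s}}\|\nabla(\zeta u)\|_{L_{2}}^{2}$. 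Here $\ell=\tfrac{2}{1+\theta}\le2=p$ (as $\theta>0$), so the preliminary fact replaces the Morrey norm of $|b|^{2/(1+\theta)}$ by $c\,|\Om|^{\,2\theta/((1+\theta)n)}\|b\|_{L^{r,n-r}}^{2/(1+\theta)}$, and raising to the power $\tfrac{1+\theta}{2}$ gives exactly $c\,|\Om|^{\theta/n}\|b\|_{L^{r,n-r}}\|\nabla(\zeta u)\|_{L_{2}}^{1+\theta}\|u\|_{L_{2}}^{1-\theta}$. The condition $\theta>\tfrac{n}{r}-\tfrac{n}{2}$ is the one that guarantees an admissible $s$ can be chosen so that both $s\le n/2$ and $\tfrac{2}{1+\theta}s\le r$ hold (note this interval of $\theta$ is non-empty exactly when $r>\tfrac{2n}{n+2}$). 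Finally, the last assertion is immediate: when $u\in\WWW{^1_2}(\Om)$, then $\zeta u,\zeta^{2}u\in\WWW{^1_2}(\Om)$ for every $\zeta\in C_{0}^{\infty}(\Bbb R^{n})$ (approximate $u$ in $W^{1}_{2}$ by $C_{0}^{\infty}(\Om)$ functions and multiply by $\zeta$), and the two arguments used nothing beyond this membership and $b\in L^{r,n-r}(\Om)$.

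The point that dictates the whole structure — and the main obstacle — is the first step of \eqref{Morrey_higher_integrability}: the obvious route, Cauchy--Schwarz $\int\zeta^{3}|b||u|^{2}\le(\int|b||\zeta^{2}u|^{2})^{1/2}(\int|b||\zeta u|^{2})^{1/2}$ followed by two applications of Proposition \ref{Morrey_usual} with $p=2$, produces only $\|\nabla(\zeta u)\|_{L_{2}}$ on the right, a strictly weaker bound; one is forced to split against the Sobolev-conjugate exponent $\tfrac{2n}{n-2}$ and then invoke Chiarenza--Frasca with $p=\tfrac{2n}{n+2}$ (rather than $p=2$) to get the sharp factor $\|\nabla(\zeta u)\|_{L_{2n/(n+2)}}$. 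The remaining work is exponent bookkeeping: one must check that in each case the auxiliary integrability exponent $s$ fits inside the narrow window $(1,n/p]\cap(1,r/\ell]$ — exactly where the hypotheses $r>\tfrac{2n}{n+2}$ and $\theta>\tfrac{n}{r}-\tfrac{n}{2}$ are spent — and verify (a routine density argument) that $\zeta u$ and $\zeta^{2}u$ lie in the homogeneous Sobolev classes on which Proposition \ref{Morrey_usual} and the Sobolev inequality are stated.
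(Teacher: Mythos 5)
The paper itself does not prove Proposition~\ref{Morrey_Compactness} --- it is cited from \cite[Proposition~3.4]{Chernobai_Shilkin_1} --- so there is no in-text argument to compare against, and I evaluate your proof on its own merits. The core mechanism you use (a H\"older splitting that reallocates powers of $\zeta$, followed by Chiarenza--Frasca applied to a fractional power $|b|^\ell$ with the Morrey norm of $|b|^\ell$ controlled by the Morrey norm of $b$ via an in-ball H\"older inequality) is the natural way to derive these estimates from Proposition~\ref{Morrey_usual}, and the exponent bookkeeping is essentially correct. In particular, your observation that a plain Cauchy--Schwarz splitting cannot produce the $L_{2n/(n+2)}$-gradient factor in \eqref{Morrey_higher_integrability}, and that one must use the conjugate Sobolev pair $(\tfrac{2n}{n+2},\tfrac{2n}{n-2})$ and then Chiarenza--Frasca at the subcritical exponent $p=\tfrac{2n}{n+2}$, is precisely the right point.

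Three smaller remarks. First, as you already flag, your argument needs $n\ge 3$: for $n=2$ the Sobolev exponent $\tfrac{2n}{n-2}$ is infinite and $p=2$ falls outside the window $p\in(1,n)$ of Proposition~\ref{Morrey_usual}; the proposition is stated in a section allowing $n\ge2$, but the paper only ever invokes it with $n=3$, so this restriction is harmless here. Second, your preliminary lemma produces the surplus power of the radius as $R^{\,p-\ell}$ with $R$ running up to $\operatorname{diam}\Om$, so what you literally get in \eqref{Morrey_Compactness_estimate} is a factor $(\operatorname{diam}\Om)^{\theta}$ rather than $|\Om|^{\theta/n}$; these agree up to a constant only for domains of bounded eccentricity, so strictly speaking your constant carries a mild geometric dependence not present in the statement. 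For the paper's purposes ($\Om$ fixed) this is immaterial, but it should be acknowledged. Third, your closing remark that $\theta>\tfrac{n}{r}-\tfrac{n}{2}$ is ``exactly where'' the hypothesis is spent overstates the case: your argument needs only the compatibility condition $\tfrac{2}{1+\theta}\,s\le r$ with some $s\in(1,n/2]$, which amounts to $\theta>\tfrac{2}{r}-1$; for $n\ge3$ one has $\tfrac{n}{r}-\tfrac{n}{2}>\tfrac{2}{r}-1$, so the stated hypothesis comfortably suffices but is not what your inequalities actually require. None of these affect the validity of the proof for the range the paper uses.
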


Proposition \ref{Morrey_Compactness} is proved in \cite[Proposition 3.4]{Chernobai_Shilkin_1}.

\newpage
\section{Boundedness of weak solutions}\label{Proof_T2}
\setcounter{equation}{0}

\bigskip
In this section we establish global boundedness of $p$--weak solutions to the problem \eqref{Equation}. Note that this result holds for a supercritical drift $b$, so we do not need the critical condition \eqref{Assumptions_Morrey_space_b} in this section.  Note also, that our result is global, i.e. it is heavily based on the homogeneous (or, more generally, regular) Dirichlet boundary conditions in \eqref{Equation} and this result is not  valid in the local setting, see, for example, counterexamples and discussion in \cite{Filonov_Shilkin}. For the related results in local setting we see the recent paper \cite{AD} and reference there.

\medskip
\begin{theorem}\label{Theorem_2}
 Assume $p>2$, $p'=\frac p{p-1}$ and $b_0$ is given by \eqref{Drift_Specific} with $b\in L_{p'}(\Om)$ satisfying  \eqref{Assumption_b}. Assume $q>3$ and $f\in L_q(\Om)$.
Then for any $\al\in \Bbb R$ any $p$-weak
solution $u\in
 \overset{\circ}{W}{^1_p}(\Om)  $ to
  the problem \eqref{Equation} satisfying  \eqref{Zero_on_Gamma}  is essentially bounded and  satisfies  the estimate
\begin{equation}
\| u\|_{L_\infty(\Om)} \ \le \ c~\| f\|_{L_q(\Om)},
\label{L_infty_Estimate}
\end{equation}
 with a
constant $c>0$ depending only on    $\Om$ and $q$.
\end{theorem}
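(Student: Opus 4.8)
The plan is to derive $L_\infty$-bounds by a Moser-type iteration adapted to the singular drift \eqref{Drift_Specific}. The crucial observation is that, thanks to the divergence-free condition \eqref{Assumption_b} on $b$ and the fact that the potential part $\al\,\nabla h$ is harmonic away from $\Ga$ with $\al\le 0$ (hence contributes a term of favourable sign through Proposition \ref{Q_Form}), the bilinear form $\mathcal B$ evaluated on truncations of $u$ does not destroy the energy estimate, provided the truncations vanish on $\Ga$. So the first step is: for $k\ge 0$ set $u_k:=(u-k)_+$ and $u^k:=-(u+k)_+$ so that $\supp u_k|_\Ga$ and $\supp u^k|_\Ga$ are contained in the set where $|u|>k$; since $u|_\Ga=0$ by \eqref{Zero_on_Gamma}, the trace of each truncation vanishes identically on $\Ga$. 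We then want to use $\eta=u_k|u_k|^{2\beta}$ (for suitable $\beta\ge0$), or rather a bounded approximation thereof, as test function in \eqref{Identity1}; the point is that the boundary term $2\pi\al\int_\Ga u\,\eta\,dl_x$ is zero because $u|_\Ga=0$, and the remaining drift term $\int_\Om \nabla u\cdot b\,\eta\,dx$ can be rewritten, using $\div b=0$ and $b\cdot\nabla u\,\eta = b\cdot\nabla\Psi(u)$ for the appropriate primitive $\Psi$, as $\int_\Om b\cdot\nabla\Psi(u)\,dx=0$ (after justifying the integration by parts for $p$-weak solutions by the density remark following Definition \ref{p-weak} and a cutoff/approximation argument). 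Thus the drift term drops out entirely and we are left with a pure energy inequality $\int_\Om |\nabla u_k|^2 |u_k|^{2\beta}\,dx \le C\int_\Om |f|\,|\nabla(u_k|u_k|^{2\beta})|\,dx$.

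The second step is the standard Moser iteration. Using $f\in L_q(\Om)$ with $q>3=n$, Young's inequality absorbs half of the gradient term, and Sobolev's inequality in $\BR^3$ (applied to $|u_k|^{1+\beta}\in \WWW{^1_2}(\Om)$, which is legitimate since $u_k\in\WWW{^1_p}(\Om)$ with $p>2$, hence the truncation is in $\WWW{^1_2}$) upgrades the left side to an $L_6$-norm of $|u_k|^{1+\beta}$. One obtains a recursive inequality of the form
\begin{equation}
\|u\|_{L_{3(1+\beta_{j+1})}(A_k)}\ \le\ (C\,\Phi(k))^{1/(1+\beta_j)}\,\|u\|_{L_{2(1+\beta_j)}(A_k)}
\label{MoserRec}
\end{equation}
with $\beta_j\to\infty$ geometrically and $A_k=\{|u|>k\}$, where $\Phi(k)$ carries the $\|f\|_{L_q}$ dependence and a factor that is summable in $j$. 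Iterating \eqref{MoserRec} and passing to the limit yields $\|u\|_{L_\infty(A_k)}\le C(\|f\|_{L_q})\,\|u\|_{L_2(A_k)}$ for a fixed starting exponent; combined with the base energy estimate at level $k=0$ (again the drift term vanishes, so $\|\nabla u\|_{L_2(\Om)}\le C\|f\|_{L_q(\Om)}$ and then $\|u\|_{L_2(\Om)}\le C\|f\|_{L_q(\Om)}$ by Poincar\'e), this gives \eqref{L_infty_Estimate} with a constant depending only on $\Om$ and $q$.

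I expect the main obstacle to be the rigorous justification that the drift term vanishes for a genuine $p$-weak solution with $p$ only slightly above $2$, rather than for a smooth function. The truncations $u_k|u_k|^{2\beta}$ are not a priori admissible test functions; one must approximate them by functions in $\WWW{^1_2}(\Om)\cap L_\infty(\Om)$, control the resulting error terms using Proposition \ref{Morrey_Compactness} (or the supercriticality of $b\in L_{p'}$ together with the higher integrability $u\in W^1_p$, $p>2$), and check that the boundary trace on $\Ga$ of each approximant still vanishes — this is exactly where the hypothesis \eqref{Zero_on_Gamma} is indispensable, and where the argument differs from the local theory. A secondary technical point is ensuring that the exponent $p>2$ is not needed quantitatively anywhere: it enters only to make the trace on $\Ga$ well-defined and to legitimize $u_k\in\WWW{^1_2}$, which is why the final constant in \eqref{L_infty_Estimate} depends only on $\Om$ and $q$. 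I would carry these steps out in the order: (i) vanishing of the drift term via integration by parts and $\div b=0$; (ii) base energy estimate and $L_2$-bound; (iii) Moser iteration using Sobolev in $\BR^3$ and $q>3$; (iv) assemble \eqref{L_infty_Estimate}.
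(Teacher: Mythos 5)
Your key insight coincides with the paper's: for levels $k\ge 0$ the drift term $\mathcal B[u,\eta]$ vanishes identically on the relevant test functions, because the pure divergence-free part $b$ integrates to zero against $\nabla\Psi(u)$ (using $\div b=0$ and $u|_{\cd\Om}=0$), while the singular part $\al\,\frac{x'}{|x'|^2}$ produces only the line integral $\pi\al\int_\Ga(\cdot)\,dl_x$, which is killed by \eqref{Zero_on_Gamma}. Where you diverge is in the iteration machinery. The paper works with the \emph{double truncation} $\bar u=T_m(u)$ and tests with $\eta=(\bar u-k)_+$, splits $\mathcal B[u,\eta]=\mathcal B[\eta,\eta]+(m-k)\int_\Om b_0\cdot\nabla(u-m)_+\,dx$, shows both pieces vanish, and thereby obtains the clean drift-free Caccioppoli-type inequality
\[
\int_\Om|\nabla(\bar u-k)_+|^2\,dx\ \le\ \|f\|_{L_q(\Om)}^2\,|A_k|^{1-\frac 2q},\qquad\forall\,k\ge 0,
\]
to which it directly applies the Ladyzhenskaya--Uraltseva De Giorgi lemma \cite[Ch.\ II, Lemma 5.3]{LU}, then passes $m\to\infty$. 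You instead propose a Moser iteration with test functions $u_k|u_k|^{2\beta}$. That route \emph{can} be made to work (and the drift term vanishes for the same structural reasons, namely $u_k^{2+2\beta}|_\Ga=0$ and $\div b=0$), but it is technically heavier in exactly the ways you flag: for $u\in\WWW{^1_p}(\Om)$ with $p$ only slightly above $2$, the powers $u_k^{1+2\beta}$ are not a priori integrable, Proposition \ref{Q_Form} requires $W^1_p$-regularity of the test function to make sense of the trace on $\Ga$, and the bilinear form $\mathcal B[u,\eta]$ itself needs $\eta\in L_{2p/(p-2)}$, so one must interleave an extra truncation-and-limit step at every stage of the iteration. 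The paper's double truncation dispenses with all of this at once: $(T_m(u)-k)_+$ is bounded and in $W^1_p$ from the start, so every integration by parts, trace, and bilinear-form evaluation is unconditionally justified, and the classical De Giorgi lemma then gives the $L_\infty$ bound in one step with the same constant dependence on $\Om$ and $q$ that you anticipate.
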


\begin{proof}
For $m>0$   we define a truncation $T_m:\Bbb R\to \Bbb R$ by $T_m(s):= m$ for $s\ge m$, $T_m(s)=s$ for $s< m$.
For any $s\in \Bbb R$ we also denote $(s)_+:=\max\{s, 0\}$.
Now we fix some $m>0$ and
and denote $ \bar u := T_m(u)$.
  Then for any $k\ge 0 $ we have
  $$
  (\bar u -k)_+\in L_\infty(\Om)\cap \overset{\circ}{W}{^1_p}(\Om), \qquad \nabla (\bar u-k)_+ = \chi_{\Om [k<u< m] } \nabla u
  $$
where $\Om [k<u<m]=\{\, x\in\Om: \, k<u(x)<m \, \}$.  Define  $\eta := (\bar u -k)_+$  and note that from \eqref{Zero_on_Gamma} for any $m\ge 0$ and any $k\ge 0$ we obtain
\begin{equation}\label{eta_is_zero}
(u-m)_+|_\Ga =0, \qquad \eta|_{\Ga} \, = \, 0
\end{equation}
in the sense of traces.
Approximating  $\eta $ by smooth functions we can  take $\eta$ as a test function  in \eqref{Identity}. For $k\ge m$ we have $\eta\equiv 0$ and hence $\mathcal B[u, \eta]=0$. For $k< m$ we obtain
  $$
  \gathered
  \mathcal B[u, \eta] \ = \
   \mathcal B[\eta , \eta] \ + \ (m-k)\, \int\limits_{\Om   } b_0\cdot \nabla (u-m)_+ \, dx .
  \endgathered
  $$
  From Proposition \ref{Q_Form} taking into account \eqref{eta_is_zero} we obtain $\mathcal B[\eta, \eta] =0$. On the other hand, from \eqref{Assumption_b} and \eqref{h_is harmonic} taking into account \eqref{eta_is_zero} we obtain
  $$
   \int\limits_{\Om   } b_0\cdot \nabla (u-m)_+ \, dx \ = \ 2\pi \al\, \int\limits_\Ga (u-m)_+(x)\, dl_x  \ = \ 0
   $$
  and hence $\mathcal B[u,\eta]=0$.
So,  for $A_k:=\{ \, x\in \Om: \, \bar u(x)>k \, \}$  from   \eqref{Identity} we obtain
 $$
 \int\limits_\Om |\nabla (\bar u-k)_+|^2\, dx \ \le \ \| f\|_{L_q(\Om)}^2 |A_k|^{1-\frac 2q}, \qquad \forall\, k\ge 0,
 $$
 which implies (see   \cite[Chapter II, Lemma 5.3]{LU})
 $$
 \esssup\limits_\Om \bar u \ \le \ c \, |\Om|^{ \dl } \, \| f\|_{L_q(\Om)}, \qquad \dl:= \tfrac 13-\tfrac 1q,
 $$
 with some constant $c>0$ depending only on $q$.
 As this estimate is uniform with respect to $m>0$ we conclude  $u$ is essentially bounded from above and
 $$
 \esssup\limits_\Om  u \ \le \ c \, |\Om|^{ \dl } \, \| f\|_{L_q(\Om)}.
 $$
 Applying the same procedure to $\bar u:= T_m(-u)$  instead of $\bar u:= T_m(u)$  we obtain $u\in L_\infty(\Om)$ as well as \eqref{L_infty_Estimate}.
 \end{proof}

\newpage
\section{De Giorgi classes}\label{DG_section}
\setcounter{equation}{0}

\bigskip
\medskip

In this section we introduce the modified De Giorgi classes which are convenient for the study of solutions to the elliptic equations with coefficients from Morrey spaces.  These classes were used before  in \cite{Chernobai_Shilkin_1}. In this section we use the following notation: for $u\in L_\infty(\Om)$ and $B_\rho(x_0)\subset \Om$ we denote
  $$
  m(x_0, \rho) := \inf\limits_{B_\rho(x_0)} u, \qquad M(x_0, \rho) := \sup\limits_{B_\rho(x_0)} u,  \qquad \om(x_0, \rho):= M(x_0,\rho) -m(x_0,\rho) .
  $$

\begin{definition}\label{Def_DG} Assume $\Om\subset \Bbb R^n$ is a bounded Lipschitz domain, $n\ge 2$.
 For $u\in W^1_2(\Omega)$ we define $A_k:= \{\, x\in \Om: \, u(x)>k\, \}$. We say $u\in DG(\Om, k_0)$ if there exist  constants $\ga>0$, $F>0$, $\be\ge 0$, $q>n$ such that for any $B_R(x_0)\subset \Om$, any $0<\rho <R $ and any $k\ge k_0$ the following inequality holds
\begin{equation}
\gathered
  \int\limits_{A_k\cap B_\rho(x_0)} |\nabla u|^2\, dx \ \le \ \frac{\ga^2}{(R-\rho)^2}\left( 1+  \frac{R^\be}{(R-\rho)^\be}\right) \, \int\limits_{A_k\cap B_R(x_0)}|u-k|^2\,dx \ + \\ + \  F^2 \, |A_k\cap B_R(x_0)|^{1-\frac 2q} \label{DG}.
\endgathered
\end{equation}

\end{definition}

 To avoid overloaded notation,  when  we need to specify constants in Definition \ref{Def_DG} we allow some terminological license and say that  the class $D(\Om,k_0)$ corresponds to the constants $\ga$, $F$, $\be$, $q$  instead of including these constants  in the notation of the functional class.

The main result of this section is the following weak version of the maximum principle:

\begin{proposition}
  \label{Density_lemma} Let $\Om\subset \Bbb R^n$ be a bounded Lipschitz domain, $n\ge 2$.
 Denote by  $DG(\Om,    k_0)$ the De Giorgi class with parameters $\ga$, $\be$, $q$, $F$ and assume
  $u\in  DG(\Om,    k_0)$.   Then $u$ is locally essentially bounded from above in $\Om$  and for any $\dl\in (0,1)$ there exists $\theta \in (0,1)$ depending only on $\dl$, $\ga$, $\be$, $q$ such that for any $B_{4R}(x_0)\subset \Om$ if
\begin{equation}\label{Density_condition}
  \big| \{ \, x\in B_{2R}(x_0): ~u(x) \le k_0  \, \} \big|\ \ge
\ \dl \, |B_{2R}|
\end{equation}
 then
  \begin{equation}\label{maximum_decay_estimate}
  \sup\limits_{B_R(x_0)} u  \, \le \, (1-\theta) \, \sup\limits_{B_{4R}(x_0)} u  \, + \, \theta \, k_0 \,  + c_1\,  F\, R^{1-\frac nq},
  \end{equation}
where $c_1>0$ depends only on $n$, $\ga$, $\be$, $q$.
\end{proposition}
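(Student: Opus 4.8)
The plan is to follow the classical De~Giorgi scheme for the class $DG(\Om,k_0)$: a local boundedness step, then a measure–decay step built on the De~Giorgi isoperimetric inequality, with the density condition \eqref{Density_condition} providing the ``ignition'' for the iteration. The first ingredient I would set up is the standard iteration lemma for De~Giorgi classes: there exist $\ep_0\in(0,1)$ and $c_0>0$, depending only on $n,\ga,\be,q$, such that whenever $u\in DG(\Om,k_0)$ with parameters $\ga,\be,q,F$, $B_{2R}(x_0)\subset\Om$, $k\ge k_0$ and $|A_k\cap B_{2R}(x_0)|\le\ep_0|B_{2R}|$, one has $\sup_{B_R(x_0)}u\le k+c_0\,F\,R^{1-n/q}$ (in the essential–supremum sense). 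This follows from iterating \eqref{DG} over the balls $B_{(1+2^{-i})R}(x_0)$ at the levels $k+c_0FR^{1-n/q}(1-2^{-i})$, closing a super-linear recursion for $|A_{k_i}\cap B_{\rho_i}|$ by the Sobolev inequality; the factor $R^\be/(R-\rho)^\be$ in \eqref{DG} is harmless since along this iteration it is just a power of $2$. Since $u\in W^1_2(\Om)\subset L_2(\Om)$ we have $|A_k\cap B_{2R}(x_0)|\to0$ as $k\to\infty$, so this lemma already yields the local essential boundedness from above claimed in the Proposition; in proving \eqref{maximum_decay_estimate} I may therefore assume $M:=\sup_{B_{4R}(x_0)}u<\infty$, since otherwise the estimate is trivial.

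Next I would reduce and iterate. Replacing $u$ by $u-k_0$ (which leaves the $DG$-parameters and \eqref{Density_condition} unchanged) I may take $k_0=0$, so the goal becomes $\sup_{B_R(x_0)}u\le(1-\theta)M+c_1FR^{1-n/q}$, and I may assume $M>0$. If $M\le C_*FR^{1-n/q}$ for a threshold $C_*$ to be fixed later, this is trivial provided $c_1\ge\theta C_*$, so assume $M>C_*FR^{1-n/q}$. Introduce the levels $k_j:=M(1-2^{-j-1})\uparrow M$ and set $a_j:=|A_{k_j}\cap B_{2R}(x_0)|$, a non-increasing sequence bounded by $|B_{2R}|$. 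Applying \eqref{DG} on $B_{2R}(x_0)\subset B_{4R}(x_0)$ at level $k_j$ and using $\int_{A_{k_j}\cap B_{4R}}|u-k_j|^2\le(M-k_j)^2|B_{4R}|$ together with $|A_{k_j}\cap B_{4R}|\le|B_{4R}|$ gives
\[
\int\limits_{A_{k_j}\cap B_{2R}(x_0)}|\nabla u|^2\,dx\ \le\ c\,2^{-2j}M^2R^{n-2}\ +\ c\,F^2R^{n-2n/q},\qquad c=c(n,\ga,\be,q).
\]
Because $F^2R^{n-2n/q}=(FR^{1-n/q})^2R^{n-2}\le C_*^{-2}M^2R^{n-2}$, the first term on the right dominates for every $j\le J_0:=\lfloor\log_2 C_*\rfloor$.

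For those levels I would then invoke the De~Giorgi isoperimetric inequality on $B_{2R}(x_0)$ — i.e.\ the Poincar\'e inequality for a function vanishing on a set of positive measure — which, combined with $|B_{2R}\setminus A_{k_j}|\ge|B_{2R}\setminus A_0|\ge\dl|B_{2R}|$ from \eqref{Density_condition}, gives $(k_{j+1}-k_j)\,a_{j+1}\le c\,\dl^{-1}R\int_{(A_{k_j}\setminus A_{k_{j+1}})\cap B_{2R}}|\nabla u|\,dx$. Feeding in $k_{j+1}-k_j=2^{-j-2}M$, the Cauchy--Schwarz inequality and the gradient bound above, all powers of $2$ cancel and I obtain, for $j\le J_0$, the clean recursion $a_{j+1}^2\le c\,\dl^{-2}R^n(a_j-a_{j+1})$. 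Summing over $j=0,\dots,J_0$ and using $a_0\le|B_{2R}|$ gives $a_{J_0+1}\le c\,\dl^{-1}(J_0+1)^{-1/2}|B_{2R}|$. Now I fix $C_*$ — hence $J_0=\lfloor\log_2 C_*\rfloor$, which depends only on $\dl,n,\ga,\be,q$ — so large that $c\,\dl^{-1}(J_0+1)^{-1/2}\le\ep_0$; this forces $|A_{k_{J_0+1}}\cap B_{2R}(x_0)|\le\ep_0|B_{2R}|$.

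To finish, I apply the first lemma with $k=k_{J_0+1}\ge 0=k_0$, obtaining $\sup_{B_R(x_0)}u\le k_{J_0+1}+c_0FR^{1-n/q}=(1-\theta)M+c_0FR^{1-n/q}$ with $\theta:=2^{-J_0-2}$; taking $c_1:=\max\{c_0,1\}$ this is the desired estimate in the main case. In the remaining case $M\le C_*FR^{1-n/q}=2^{J_0}FR^{1-n/q}$ one has $\theta M\le 2^{-2}FR^{1-n/q}\le c_1FR^{1-n/q}$, so the estimate holds there as well; undoing $u\mapsto u-k_0$ then yields \eqref{maximum_decay_estimate} with $\theta$ depending only on $\dl,n,\ga,\be,q$ and $c_1$ only on $n,\ga,\be,q$. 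The delicate point of the whole argument — and the one I would expect to be the main obstacle to get right — is exactly this interplay between the inhomogeneity $F$ and the measure-decay iteration: the clean recursion for $a_j$ is available only for finitely many levels ($j\lesssim\log_2 C_*$), so one must decide in advance how many levels to run, and tune the dichotomy threshold $C_*$ against the smallness constant $\ep_0$; this closes with $c_1$ independent of $\dl$ precisely because $C_*$ and $\theta=2^{-J_0-2}$ are reciprocal, keeping the product $\theta C_*$ bounded.
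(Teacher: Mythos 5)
Your overall plan is the same as the paper's: the paper proves this Proposition by combining the three standard De~Giorgi lemmas (its Lemma 4.1 local boundedness, Lemma 4.2 ``thin superlevel set implies the sup halves,'' and Lemma 4.3 the measure-decay step based on De~Giorgi's isoperimetric inequality). Your measure-decay iteration, the summation of $a_{j+1}^2 \le c\dl^{-2}R^n(a_j-a_{j+1})$, the dichotomy between $M > C_*FR^{1-n/q}$ and $M\le C_*FR^{1-n/q}$, and the remark that $\theta C_*$ stays bounded so that $c_1$ is $\dl$-independent, are all correct and essentially reproduce Lemma 4.3 and the final combination.

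However, your ``first lemma'' is misstated and this creates a genuine gap. You claim: there exist $\ep_0, c_0$ depending only on $n,\ga,\be,q$ such that $|A_k\cap B_{2R}|\le\ep_0|B_{2R}|$ implies $\sup_{B_R}u \le k + c_0 F R^{1-n/q}$. This is false. The De~Giorgi iteration you invoke runs on $Y_i := \int_{A_{k_i}\cap B_{\rho_i}}(u-k_i)_+^2$, and the right-hand side of \eqref{DG} at level $k_{i+1}$ produces the term $\int_{A_{k_{i+1}}}(u-k_{i+1})_+^2$, which is \emph{not} controlled by the measure $|A_{k_{i+1}}|$ alone: one needs a pointwise bound on $(u-k)_+$ in $B_{2R}$. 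Consequently the recursion for $|A_{k_i}\cap B_{\rho_i}|$ that you mention does not close, and the smallness condition that makes the iteration converge is a smallness of $\int_{B_{2R}}(u-k)_+^2$ — not of $|A_k\cap B_{2R}|$. (Concretely, the closing condition is of the form $(\sup_{B_{2R}}(u-k)_+)^2\,|A_k\cap B_{2R}| \lesssim d^2 R^n$ with $d\sim F R^{1-n/q}$, which involves $\sup_{B_{2R}}(u-k)_+$.) For the same reason, deducing local essential boundedness from this lemma is circular: the lemma itself needs $\sup u$ finite in order to close. The correct statement — which is exactly the paper's Lemma 4.1 — is
\[
\sup_{B_R}(u-k)_+ \ \le\ c_*\Bigl[\Bigl(\fint_{B_{2R}}(u-k)_+^2\,dx\Bigr)^{1/2} + F R^{1-n/q}\Bigr],
\]
valid for $k\ge k_0$, and local boundedness follows from this with $k=k_0$ since $(u-k_0)_+\in L_2$.

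The gap is fixable without changing your structure, but the final arithmetic must be adjusted. After the measure-decay iteration gives $a_{J_0+1}\le\ep'\,|B_{2R}|$ with $\ep'\le c\,\dl^{-1}(J_0+1)^{-1/2}$, apply the correct lemma at $k'=k_{J_0+1}$ and use $(u-k')_+\le M-k' = 2^{-J_0-2}M$ on $A_{k'}$ to get
\[
\sup_{B_R}u \ \le\ k' + c_*\sqrt{\ep'}\,(M-k') + c_* F R^{1-n/q}
\ =\ M - \bigl(1-c_*\sqrt{\ep'}\bigr)2^{-J_0-2}M + c_* F R^{1-n/q}.
\]
You now need $c_*\sqrt{\ep'}<1$, so $J_0$ must be chosen large enough to beat both $\dl$ and $c_*$ (the constant $c_*$ already depends only on $n,\ga,\be$); then $\theta=(1-c_*\sqrt{\ep'})\,2^{-J_0-2}$ and the rest of your argument (in particular the trivial case $M\le 2^{J_0}FR^{1-n/q}$ and the $\dl$-independence of $c_1$) goes through unchanged. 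In short: replace your ``first lemma'' by the paper's Lemma 4.1 (or equivalently its dichotomy form Lemma 4.2), keep the rest.
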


Proposition \ref{Density_lemma} is a simple combination of statements of Lemmas \ref{Boundedness_Holder} ---  \ref{How_to_find} below. These  Lemmas are standard and  their proofs can be found, for example,  in \cite{Chernobai_Shilkin_1}, see also \cite{LU}.

\begin{lemma}\label{Boundedness_Holder}
Assume  $u\in DG(\Om, k_0)$. Then $u$ is locally essentially bounded from above in $\Om$ and for any $B_{2R}(x_0)\subset \Om$
\begin{equation}\label{Boundedness_Estimate_Holder}
\sup\limits_{B_R(x_0) } ~(u-k_0)_+ \ \le \ c_*  \,  \left[\Big(\ \ \
-\!\!\!\!\!\!\!\!\!\!\!\!\int\limits_{B_{2R}(x_0) } |(u-k_0)_+|^2~dx \Big)^{1/2} + F\, R^{1-\frac nq}\right]
\end{equation}
where $(u-k_0)_+:=\max\{ u-k_0,0\}$ and  $c_*>0$ depends only on $n$, $\ga$ and $\be$.
\end{lemma}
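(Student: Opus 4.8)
**Proof proposal for Lemma \ref{Boundedness_Holder} (local boundedness from above and the sup-mean estimate for the De Giorgi class)**

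The plan is to run the classical De Giorgi iteration, adapted so that the extra factor $\bigl(1+R^\be/(R-\rho)^\be\bigr)$ appearing in \eqref{DG} does not destroy the iteration. First I would reduce to $k_0=0$ by replacing $u$ with $u-k_0$, noting that $(u-k_0)_+ \in W^1_2$ and that the De Giorgi inequality is translation-invariant in the level $k$. I would also normalize to the ball $B_{2R}(x_0)$; by scaling $x \mapsto x_0 + 2R y$ one can try to reduce to $R$ of order one, keeping careful track of how $F$ enters (it becomes $F R^{1-n/q}$ — exactly the shift appearing on the right of \eqref{Boundedness_Estimate_Holder}), but it is cleaner to keep $R$ and work directly.

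The core is a nested-ball iteration. Fix the radii $\rho_j := R(1+2^{-j})$, so $\rho_0 = 2R$, $\rho_j \downarrow R$, and $\rho_j - \rho_{j+1} = R\,2^{-(j+1)}$. Fix the increasing levels $k_j := d(1-2^{-j})$, where $d>0$ is a constant to be chosen at the end; note $k_j \uparrow d$. Set
\[
y_j \ := \ \frac{1}{|B_{2R}|}\int\limits_{A_{k_j}\cap B_{\rho_j}(x_0)} |u-k_j|^2\, dx .
\]
The goal is to show that if $y_0$ (which is comparable to the mean-square of $(u-k_0)_+$ over $B_{2R}$, up to the choice of $d$) is small enough, then $y_j \to 0$, which forces $u \le d$ on $B_R(x_0)$. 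To get the recursion I would: (i) apply \eqref{DG} on the pair $(\rho_{j+1},\rho_j)$ with level $k_{j+1}$ to bound $\int_{A_{k_{j+1}}\cap B_{\rho_{j+1}}}|\nabla u|^2$ by $C\,4^{j}\,R^{-2}\,y_j\,|B_{2R}| + F^2 |A_{k_{j+1}}\cap B_{\rho_j}|^{1-2/q}$, where the factor $R^\be/(R-\rho)^\be \le 2^{\be(j+1)}$ is absorbed into a geometric factor like $C^j$; (ii) introduce a cutoff $\zeta_j$ equal to $1$ on $B_{\rho_{j+1}}$, vanishing outside $B_{\rho_j}$, with $|\nabla\zeta_j|\le C 2^j/R$, and apply Sobolev's inequality to $(u-k_{j+1})_+ \zeta_j$ to upgrade the $L^2$-mean to an $L^{2^*}$-mean, $2^* = 2n/(n-2)$; (iii) use Chebyshev, $k_{j+1}-k_j = d\,2^{-(j+1)}$, to bound the measure $|A_{k_{j+1}}\cap B_{\rho_{j+1}}|$ by $C\,4^j d^{-2} y_j |B_{2R}|$, and feed this back via Hölder ($L^2 \le L^{2^*}$ times a power of the measure). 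Combining these yields an inequality of the form
\[
y_{j+1} \ \le \ C\, b^{\,j}\,\Bigl( R^{-2} + d^{-2}\bigl(F R^{-n/q} + \dots\bigr)^2 \Bigr)\, y_j^{1+\varepsilon}
\]
for some $b>1$, $\varepsilon = 2/n>0$, after also using the freedom in $d$ to make the $F$-term harmless — here is where one chooses $d \ge c_*\bigl(y_0^{1/2} + F R^{1-n/q}\bigr)$ so that both the leading coefficient and the $F$ contribution are controlled.

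With the recursion $y_{j+1}\le C b^j y_j^{1+\varepsilon}$ in hand, the standard fast-geometric-convergence lemma (e.g. \cite[Chapter II, Lemma 4.7]{LU}, the same device used in the proof of Theorem \ref{Theorem_2}) gives $y_j \to 0$ provided $y_0 \le C^{-1/\varepsilon} b^{-1/\varepsilon^2}$, i.e. provided the initial mean-square is below a threshold depending only on $n$, $\ga$, $\be$. The choice of $d$ above makes this automatic after replacing $d$ by a fixed multiple of $\bigl(\fint_{B_{2R}}|(u-k_0)_+|^2\bigr)^{1/2} + F R^{1-n/q}$; then $y_j\to 0$ says $|A_d \cap B_R(x_0)| = 0$, i.e. $\sup_{B_R(x_0)}(u-k_0)_+ \le d$, which is exactly \eqref{Boundedness_Estimate_Holder} with $c_* $ depending only on $n$, $\ga$, $\be$. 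Local essential boundedness from above in all of $\Om$ follows by covering any compact subset with such balls.

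The main obstacle is bookkeeping: one must verify that every extra factor generated by the iteration — the $R^\be/(R-\rho)^\be$ term from \eqref{DG}, the $2^j/R$ from the cutoffs, the $4^j/d^2$ from Chebyshev — produces only a pure geometric factor $b^j$ with a base $b>1$ depending solely on $n,\ga,\be,q$ (not on $R$, $d$, or $F$), and that the exponent of $y_j$ stays strictly above $1$; the $\be$-dependence is the only genuinely new point relative to the textbook argument, and it is handled because $2^{\be(j+1)}$ is again of the form $b^j$. Once that is checked, invoking the convergence lemma is routine, and the dependence of $c_*$ on only $n,\ga,\be$ (and not $q$, since $q$ enters only through the already-absorbed $F$-term and the fixed geometric bases) comes out of tracking constants. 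This is why I would present the $\be$-absorption step carefully and then cite \cite{LU} for the remainder.
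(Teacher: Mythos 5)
Your proposal is correct and coincides with the standard De Giorgi iteration to which the paper defers: the paper does not prove this lemma itself but states that it (together with Lemmas \ref{Thin_set} and \ref{How_to_find}) is standard and refers to \cite{Chernobai_Shilkin_1} and \cite{LU}, where exactly this nested-ball/nested-level iteration with the $\be$-factor absorbed into the geometric base $2^\be$ is carried out. The one point you assert without detail — that $c_*$ is independent of $q$ even though the exponent $1-2/q$ sits in the De Giorgi inequality — is true but deserves a line: once $d\ge F R^{1-n/q}$ one has $F^2|A|^{1-2/q}\le d^2R^{2n/q-2}|A|^{1-2/q}$, and since $q>n$ implies $2/n-2/q>0$ one can write $|A|^{1-2/q}=|A|^{1-2/n}\,|A|^{2/n-2/q}\le C_n^{2/n-2/q}R^{2-2n/q}|A|^{1-2/n}$, so all powers of $R$ and all $q$-dependence cancel and the $F$-term enters the recursion with the same exponent $1-2/n$ as the Sobolev term, with a prefactor bounded uniformly for $q>n$.
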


\begin{lemma}\label{Thin_set}
Assume $u\in DG(\Om, k_0)$. Then there exists
$ \delta_0\in (0,1)$ depending on $  n$, $\ga$, $\be$ in Definition \ref{Def_DG} of the De Giorgi class such that for any
$  B_{2R}(x_0) \subset\Om$  if
$$
|B_{2R}(x_0)\cap A_{k_0}| \, \le \,  \delta_0 \, |B_{2R}|$$ then either
\begin{equation}\label{Sup_decay}
  \sup\limits_{B_R(x_0)} (u-k_0)_+ \ \le \ \frac 12~
\sup\limits_{B_{4R}(x_0)} (u-k_0)_+
\end{equation}
or
\begin{equation}\label{Sup_decay_1}
 \sup\limits_{B_{4R}(x_0)} (u-k_0)_+  \ \le \ 4c_* \, F \, (2R)^{1-\frac nq}
\end{equation}
where $c_*>0$ is a constant from \eqref{Boundedness_Estimate_Holder}.
\end{lemma}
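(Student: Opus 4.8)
The plan is to obtain Lemma~\ref{Thin_set} as a short consequence of the local boundedness estimate \eqref{Boundedness_Estimate_Holder} of Lemma~\ref{Boundedness_Holder}, with \emph{no} iteration over truncation levels: all the De Giorgi iteration is already hidden inside \eqref{Boundedness_Estimate_Holder}, and the dichotomy together with the gain factor $\tfrac12$ comes from a single application of that estimate between the radii $R$ and $2R$, combined with the smallness of $|B_{2R}(x_0)\cap A_{k_0}|$. Throughout I would work on a ball with $B_{4R}(x_0)\subset\Om$ (the natural requirement for the suprema over $B_{4R}(x_0)$ to be defined and, by Lemma~\ref{Boundedness_Holder}, finite; this also matches the hypothesis of Proposition~\ref{Density_lemma}), and abbreviate $B_\rho=B_\rho(x_0)$.

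First I would set $M:=\sup_{B_{4R}}(u-k_0)_+$. If $M=0$, then $(u-k_0)_+\equiv 0$ on $B_{4R}$ and both alternatives \eqref{Sup_decay}, \eqref{Sup_decay_1} hold trivially, so assume $M>0$. On $B_{2R}$ one has $0\le (u-k_0)_+\le M$ (since $B_{2R}\subset B_{4R}$), and $(u-k_0)_+$ is supported in $A_{k_0}\cap B_{2R}$, whose measure is at most $\dl_0|B_{2R}|$ by hypothesis; hence
\begin{equation*}
\frac1{|B_{2R}|}\int\limits_{B_{2R}}|(u-k_0)_+|^2\,dx\ \le\ \dl_0\,M^2 .
\end{equation*}
Since $B_{2R}\subset\Om$, applying \eqref{Boundedness_Estimate_Holder} on $B_R\subset B_{2R}$ gives
\begin{equation*}
\sup\limits_{B_R}(u-k_0)_+\ \le\ c_*\sqrt{\dl_0}\;M\ +\ c_*\,F\,R^{1-\frac nq}.
\end{equation*}

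Now I would fix $\dl_0=\dl_0(n,\ga,\be)\in(0,1)$ so small that $c_*\sqrt{\dl_0}\le\tfrac14$; this is legitimate because $c_*$ in \eqref{Boundedness_Estimate_Holder} depends only on $n,\ga,\be$, so the resulting threshold $\dl_0$ depends only on $n,\ga,\be$, as claimed. Two cases remain. If $M\le 4c_*F(2R)^{1-\frac nq}$, then $\sup_{B_{4R}}(u-k_0)_+=M\le 4c_*F(2R)^{1-\frac nq}$, which is \eqref{Sup_decay_1}. Otherwise $c_*F(2R)^{1-\frac nq}<\tfrac M4$; since $q>n$ we have $1-\tfrac nq>0$ and hence $R^{1-\frac nq}\le(2R)^{1-\frac nq}$, so the last display yields $\sup_{B_R}(u-k_0)_+\le\tfrac M4+\tfrac M4=\tfrac M2=\tfrac12\sup_{B_{4R}}(u-k_0)_+$, which is \eqref{Sup_decay}.

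There is no genuinely delicate point here; the argument is \eqref{Boundedness_Estimate_Holder} plus bookkeeping. The two things worth double-checking are: (i) the output supremum is taken over a strictly smaller ball ($B_R$) than the one controlling the oscillation ($B_{4R}$), so the bound $\sup_{B_R}(u-k_0)_+\le\tfrac14 M+\cdots$ is not circular; and (ii) the threshold $\dl_0$ is extracted purely from the constant $c_*$ of Lemma~\ref{Boundedness_Holder}, so it depends only on $n,\ga,\be$ and not on $q$ or $F$, which is exactly the dependence needed when these lemmas are combined into Proposition~\ref{Density_lemma}.
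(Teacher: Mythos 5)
Your proposal is correct. The paper itself omits a proof of Lemma~\ref{Thin_set}, citing \cite{Chernobai_Shilkin_1} and \cite{LU}; your argument — feed the thin-set bound $\fint_{B_{2R}}|(u-k_0)_+|^2\le \dl_0 M^2$ into the local sup-estimate \eqref{Boundedness_Estimate_Holder} on the pair $B_R\subset B_{2R}$, choose $\dl_0$ so that $c_*\sqrt{\dl_0}\le\tfrac14$, and split according to whether $M$ exceeds $4c_*F(2R)^{1-n/q}$ — is exactly the standard route used in those references, and the dependence $\dl_0=\dl_0(n,\ga,\be)$ is extracted correctly from the constant $c_*$.
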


\begin{lemma} \label{How_to_find}
Assume
$u\in DG(\Om, k_0)$. Then for any  $\dl\in (0,1)$  there exists  $s\in \Bbb N$ depending only on $\dl$, $n$, $\ga$, $\be$   such that  if for some $ B_{4R}(x_0)\subset\Om$  we have
$$
|B_{2R}(x_0)\setminus A_{ k_0}| \
\ge \ \dl~|B_{2R}|
$$
  then either
\begin{equation}\label{How_to_find_est}
|B_{2R}(x_0) \cap
A_{\bar k} | \ \le  \ \dl_0~ |B_{2R}|,
\end{equation}
or
\begin{equation}\label{How_to_find_est_1}
\sup\limits_{B_{4R}(x_0)} (u -k_0)_+ \, \le \,  2^s\, F\, R^{1-\frac nq}.
\end{equation}
Here $\dl_0\in (0,1)$ is the constant from Lemma \ref{Thin_set} and we denote
$$
\bar k \, = \,  M(x_0, 4R)  - \frac 1{2^s}\Big(M(x_0, 4R)  - k_0\Big), \qquad M(x_0,4R) \,    := \,
\sup\limits_{B_{4R}(x_0)} u.
$$
\end{lemma}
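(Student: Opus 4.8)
The plan is to run the classical De~Giorgi measure--shrinking step between the level $k_0$, where by hypothesis $u$ stays below on a definite fraction $\delta$ of $B_{2R}(x_0)$, and the top level $M:=M(x_0,4R)$, the only new ingredient being the inhomogeneous term $F$ in \eqref{DG}. By Lemma~\ref{Boundedness_Holder} the number $M$ is finite; if $M\le k_0$ both alternatives hold trivially, so I may assume $\tau:=M-k_0>0$. I fix the integer $s$ now (its value is pinned down only at the very end, and the argument will exhibit $s$ as depending on $\delta,n,\ga,\be$ alone). If $\tau\le 2^sFR^{1-n/q}$, then, since $\sup_{B_{4R}(x_0)}(u-k_0)_+=\tau$, the second alternative \eqref{How_to_find_est_1} already holds; so from here on I assume the separation
$$
\tau \ > \ 2^s\, F\, R^{1-\frac nq}. \qquad (\star)
$$
Introduce the dyadic levels $\ell_j:=M-2^{-j}\tau$, $j\ge 0$, so that $\ell_0=k_0$, $\ell_s=\bar k$, the sequence $\ell_j$ increases to $M$, and --- crucially for the cancellation below --- $\ell_{j+1}-\ell_j=\tfrac12(M-\ell_j)=2^{-j-1}\tau$. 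Write $a_j:=|A_{\ell_j}\cap B_{2R}(x_0)|$ (non-increasing) and $b_j:=|(A_{\ell_j}\setminus A_{\ell_{j+1}})\cap B_{2R}(x_0)|$; the sets in the definition of $b_j$ are pairwise disjoint, so $\sum_{j\ge0}b_j\le|B_{2R}|$. The goal is to reach \eqref{How_to_find_est}, i.e. $a_s\le\dl_0|B_{2R}|$, where $\dl_0=\dl_0(n,\ga,\be)$ is the constant of Lemma~\ref{Thin_set}.

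For the energy I apply the De~Giorgi class inequality \eqref{DG} on the pair $B_{2R}(x_0)\subset B_{3R}(x_0)\subset\Om$ with level $k=\ell_j\ge k_0$: on $A_{\ell_j}\cap B_{3R}(x_0)$ one has $0<u-\ell_j\le M-\ell_j$, and $|A_{\ell_j}\cap B_{3R}(x_0)|\le|B_{3R}|$, hence
$$
\int_{A_{\ell_j}\cap B_{2R}(x_0)}|\nabla u|^2\,dx \ \le \ \frac{\ga^2(1+3^\be)}{R^2}(M-\ell_j)^2|B_{3R}| \ + \ F^2|B_{3R}|^{1-\frac2q}.
$$
For $0\le j\le s$, condition $(\star)$ gives $M-\ell_j=2^{-j}\tau\ge 2^{-s}\tau> FR^{1-n/q}$, so that $(M-\ell_j)^2R^{n-2}>F^2R^{n-2n/q}\ge c(n)F^2|B_{3R}|^{1-2/q}$; this lets me absorb the inhomogeneous contribution into the principal term and obtain, for all $0\le j\le s$,
$$
\int_{A_{\ell_j}\cap B_{2R}(x_0)}|\nabla u|^2\,dx \ \le \ C_3\,(M-\ell_j)^2\,R^{n-2}, \qquad C_3=C_3(n,\ga,\be).
$$
(Using $(\star)$ here, rather than simply carrying $F$ along, is exactly what keeps $C_3$, and hence $s$, independent of $q$, $F$ and $R$.)

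Next the isoperimetric step. From $|B_{2R}(x_0)\setminus A_{k_0}|\ge\dl|B_{2R}|$ and $A_{\ell_j}\subseteq A_{k_0}$ I get $|B_{2R}(x_0)\setminus A_{\ell_j}|\ge\dl|B_{2R}|$ for every $j$. The De~Giorgi isoperimetric inequality on the ball $B_{2R}(x_0)$ with levels $\ell_j<\ell_{j+1}$, followed by Cauchy--Schwarz and the energy bound, yields for $0\le j\le s-1$
$$
(\ell_{j+1}-\ell_j)\,a_{j+1}^{\frac{n-1}{n}} \ \le \ \frac{c_n(2R)^n}{\dl|B_{2R}|}\Big(\int_{A_{\ell_j}\cap B_{2R}(x_0)}|\nabla u|^2\,dx\Big)^{\!\frac12} b_j^{\frac12} \ \le \ \frac{C_4\sqrt{C_3}}{\dl}\,(M-\ell_j)\,R^{\frac{n-2}{2}}\,b_j^{\frac12},
$$
with $C_4=C_4(n)$. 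Since $\ell_{j+1}-\ell_j=\tfrac12(M-\ell_j)$, the factor $M-\ell_j>0$ cancels; squaring and summing over $j=0,\dots,s-1$, using $a_{j+1}\ge a_s$ and $\sum_jb_j\le|B_{2R}|$, gives
$$
s\,a_s^{\frac{2(n-1)}{n}} \ \le \ \frac{C_5^2}{\dl^2}\,R^{n-2}|B_{2R}| \qquad\Longrightarrow\qquad a_s \ \le \ C_6\,\dl^{-\frac{n}{n-1}}\,s^{-\frac{n}{2(n-1)}}\,R^n,
$$
with $C_5,C_6$ depending only on $n,\ga,\be$. Finally I choose $s=s(\dl,n,\ga,\be)$ as the least integer making the right-hand side $\le\dl_0|B_1|2^nR^n=\dl_0|B_{2R}|$; this $s$ then serves in both branches of the dichotomy, so \eqref{How_to_find_est} or \eqref{How_to_find_est_1} holds, which finishes the proof.

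The one non-automatic point --- the step I expect to need the most care --- is the absorption in the energy bound: one must use the separation $(\star)$ to dominate $F^2|B_{3R}|^{1-2/q}$ by the leading term $(M-\ell_j)^2R^{n-2}$ uniformly over $j\le s$, since this is precisely what lets $s$ be chosen depending only on $\dl,n,\ga,\be$ and not on $q$, $F$ or $R$. Beyond that I would only double-check two routine matters: that the De~Giorgi isoperimetric inequality, usually stated for $W^{1,1}$, applies to $u|_{B_{2R}(x_0)}\in W^1_2$, and the exact normalization of its constant $c_n$, which affects nothing but the final numerical value of $s$.
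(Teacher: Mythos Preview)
Your argument is correct and is exactly the classical De~Giorgi measure-shrinking lemma adapted to the inhomogeneous class \eqref{DG}; the paper does not give its own proof but defers to \cite{Chernobai_Shilkin_1} and \cite{LU}, where the same strategy (dyadic levels between $k_0$ and $M(x_0,4R)$, the De~Giorgi isoperimetric inequality, and summation of the telescoping measure differences) is used. The one point specific to the present setting --- using the separation $(\star)$ to absorb $F^2|B_{3R}|^{1-2/q}$ into the leading term so that $s$ stays independent of $q$, $F$, $R$ --- is handled correctly; the bound $(|B_1|3^n)^{1-2/q}\le\max\{1,|B_1|3^n\}$ indeed makes the absorption constant depend only on $n$.
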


Proposition \ref{Density_lemma} provides the control of the oscillation of a function belonging to the modified De Giorgi class if the assumption \eqref{Density_condition} is satisfied. The validity of \eqref{Density_condition} for the points $x_0$ on the singular  curve $\Ga$ of the drift \eqref{Drift_Specific} follows from the following weak form of the Harnak inequality which we borrow from \cite{Tsai_Book}, see also \cite{Chernobai_Shilkin}. From now on we restrict ourselves to the case $n=3$ so that  the trace $u|_{\Ga}$ for a $p$-weak solution $u\in W^1_p(\Om)$ is well-defined.

\begin{proposition}\label{Weak_Harnack} Assume $B_R:=\{x\in \Bbb R^3: \, |x|<R\}$.
 Assume  $p>2$,   $b\in L_{p'}(B_2)$, $p'=\frac{p}{p-1}$ satisfies $\div b=0$ in $\mathcal D'(B_2)$,   and $g\in L_p(B_2)$. Assume
  $v\in  W^1_p(B_2)$ satisfies  the equation (in the sense of distributions)
  \begin{equation}
  -\Delta v +  b_0 \cdot \nabla v \, = \, -\div g \quad \mbox{in} \quad B_2, \qquad b_0 :=  b -\al \, \frac{x'}{|x'|^2} .
  \label{Equation_v}
  \end{equation}
 Assume $\al\not =0$ and
\begin{equation}\label{Assumptions_v}
  0\le v\le 2 \quad \mbox{in} \quad B_2, \qquad v|_{\Ga} \ge 1, \qquad \Ga:= \{ \, x\in \Bbb R^3:~ x_1=x_2=0~\}.
\end{equation}
  Then there exists constants $\dl_1 \in (0,1)$ and $\la_1\in (0,1)$ depending only on $\| b\|_{L_{p'}(B_2)}$ in the explicit way specified below such that if
\begin{equation}\label{Assumption_g}
  \| g\|_{L_2(B_2)} \, \le \, c_{\star} \, |\al|, \qquad \mbox{where} \qquad c_{\star} :=  \frac{\pi}{4|B_2|^{1/2}} ,
\end{equation}
  then
 $$
  \big| \{ \, x\in B_2: ~v(x) \ge \la_1  \, \} \big|\ \ge
\ \dl_1.
  $$
\end{proposition}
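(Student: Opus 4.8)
The plan is to argue by contradiction and compactness. Suppose the conclusion fails. Then there exists a sequence of data $b^{(m)}$, $\al^{(m)}$, $g^{(m)}$ and solutions $v^{(m)}\in W^1_p(B_2)$ satisfying \eqref{Equation_v}, \eqref{Assumptions_v}, \eqref{Assumption_g} with a fixed bound $\|b^{(m)}\|_{L_{p'}(B_2)}\le K$, such that $|\{x\in B_2:~v^{(m)}(x)\ge \la\}|\to 0$ as we let $\la\to 0$ along the sequence; more precisely, for every $\la,\dl>0$ there is a member of the family with $|\{v\ge\la\}|<\dl$. Since $0\le v^{(m)}\le 2$ and (using $\zeta^2 v^{(m)}$ as a test function together with Proposition \ref{Q_Form}, the sign $\al<0$, and the Morrey–Chiarenza–Frasca bound of Proposition \ref{Morrey_Compactness} applied to $b^{(m)}$ — here one only needs $b^{(m)}\in L_{p'}$ with $p'>3/2$, which gives an $L^{r,3-r}$ bound) the local energy $\|\nabla v^{(m)}\|_{L_2(B_{3/2})}$ is uniformly bounded, we may extract a subsequence with $v^{(m)}\rightharpoonup v$ in $W^1_2(B_{3/2})$, $v^{(m)}\to v$ in $L_2$ and a.e. Then $v\ge 0$, $v\le 2$, and $|\{v>0\}\cap B_{3/2}|=0$, i.e. $v\equiv 0$ a.e. in $B_{3/2}$.

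The key point — and the main obstacle — is to extract a contradiction from the boundary condition $v^{(m)}|_\Ga\ge 1$, which concerns the trace on a set of measure zero and is \emph{not} stable under $L_2$ convergence. This is exactly where the structure of the singular drift enters. The idea, following \cite{CSTY} (cf. \cite{Tsai_Book}), is that the identity \eqref{Identity1} for $v^{(m)}$ contains the term $2\pi\al^{(m)}\int_\Ga v^{(m)}\eta\,dl_x$; choosing a fixed cutoff $\eta\in C_0^\infty(B_{3/2})$ with $\eta\equiv 1$ near $\Ga\cap B_1$ and $0\le\eta\le 1$, and using $v^{(m)}|_\Ga\ge 1$, we get a one-sided lower bound on $\bigl|\al^{(m)}\bigr|\,\mathrm{length}(\Ga\cap\{\eta=1\})$ in terms of $\int_{B_{3/2}}\nabla v^{(m)}\cdot(\nabla\eta + b^{(m)}\eta)\,dx$ and $\int g^{(m)}\cdot\nabla\eta\,dx$. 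The right-hand side is controlled: $\|\nabla v^{(m)}\|_{L_2}$ is bounded, $\|b^{(m)}\eta\|$ is handled via the Morrey estimate and the smallness/boundedness of $\eta v^{(m)}$, and crucially $\|g^{(m)}\|_{L_2(B_2)}\le c_\star|\al^{(m)}|$ with $c_\star$ chosen precisely so that this term is strictly dominated by the main term $\pi|\al^{(m)}|$ coming from the integral over $\Ga$. Passing to the limit: the terms involving $\nabla v^{(m)}$ tend to $\int\nabla v\cdot(\nabla\eta+b\eta)\,dx$ where $b$ is the weak limit of $b^{(m)}$ (bounded in $L_{p'}$, hence weakly convergent after a further subsequence, and $\div b=0$ is preserved), but $v\equiv 0$ so this limit is $0$; the $g^{(m)}$-term, after dividing through by $|\al^{(m)}|$, is at most $c_\star|B_2|^{1/2}=\pi/4$; while the $\Ga$-term stays bounded below by a fixed positive multiple of $\pi$. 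Dividing the whole identity by $|\al^{(m)}|$ and letting $m\to\infty$ yields $\pi\cdot(\text{const})\le 0 + \pi/4 + o(1)$, a contradiction once the geometric constant is fixed large enough (which it is, since $\Ga\cap B_1$ has length $2$).

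Two technical points need care and I would address them as separate lemmas or remarks. First, the compactness argument should be run on a ball slightly smaller than $B_2$ (say $B_{3/2}$) so that the cutoffs are legitimate and the trace inequality $W^1_p(B_{3/2})\hookrightarrow L_p(\Ga\cap B_1)$ and the energy estimate both apply with constants independent of $m$; one then concludes on $B_2$ by noting the asserted set $\{v\ge\la_1\}$ can be taken inside $B_1\subset B_2$. Second, the explicit dependence of $\dl_1$ and $\la_1$ on $\|b\|_{L_{p'}(B_2)}$: this is recovered by making the contradiction argument quantitative — i.e. by tracking that the only way the inequality $\pi\,(\text{const})\le \pi/4 + (\text{terms controlled by }K\text{ and }\|\nabla v\|_{L_2})$ can be salvaged is if $|\{v\ge\la\}|$ is bounded below, with the bound a decreasing function of $K$. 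Alternatively one states the contradiction argument for fixed $K$ and notes that all constants produced depend only on $K$, which is the form \eqref{Assumption_g} and the statement ask for. The hard part, to reiterate, is the passage to the limit in the $\Ga$-trace term and the verification that the constant $c_\star=\pi/(4|B_2|^{1/2})$ is precisely what makes the $g$-contribution lose against the $\al$-contribution.
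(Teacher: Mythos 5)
Your high-level intuition is right — the heart of the matter is to test the equation with a fixed cutoff $\eta$ supported away from $\partial B_2$, exploit the term $2\pi\alpha\int_\Gamma v\eta\,dl$ coming from the singular part of the drift, and play it against the $g$-term whose smallness is tuned by $c_\star$. But the compactness scaffolding you have wrapped around this idea is both unnecessary and leaky, and the paper's argument is substantially simpler. The paper simply assumes, toward a contradiction, that $|\{x\in B_2:\,v(x)>\lambda_1\}|\le\delta_1$, and then in the identity
$$
2\pi\alpha\int_{\Gamma\cap B_2}v\eta\,dl_x \;=\; \int_{B_2}v\bigl(\Delta\eta+b\cdot\nabla\eta\bigr)\,dx \;+\; \int_{B_2}g\cdot\nabla\eta\,dx
$$
the \emph{entire} right-hand side is controlled by $\|v\|_{L_\infty}\le 2$, $\|\eta\|_{C^2}$, $\|b\|_{L_{p'}}$, and the smallness data $\delta_1,\lambda_1$, after splitting $B_2$ into $\{v>\lambda_1\}$ (small measure) and $\{v\le\lambda_1\}$ (small integrand). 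No gradient of $v$ appears anywhere. The crucial step that makes this work is the integration by parts on the $\nabla v\cdot(\nabla\eta + b\eta)$ term, using $\div b=0$, which transfers all derivatives off $v$. Once that is done, the contradiction $\pi|\alpha|\le 2c_{\star\star}(1+\|b\|_{L_{p'}})\delta_1^{1/p}+c_{\star\star}(1+\|b\|_{L_1})\lambda_1$ is immediate upon choosing $\delta_1,\lambda_1$ small. There is nothing to pass to the limit.

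Your version, by contrast, has concrete gaps. First, you keep $\nabla v^{(m)}$ in the argument and therefore need a uniform local energy bound $\|\nabla v^{(m)}\|_{L_2(B_{3/2})}\le C$. You invoke the sign of $\alpha$ to get this via Proposition~\ref{Q_Form}, but the proposition only assumes $\alpha\ne 0$, not $\alpha<0$, so the Caccioppoli estimate is not available under the stated hypotheses (nor is it needed, as the paper shows). Second, even granting the energy bound, the claimed limit $\int\nabla v^{(m)}\cdot b^{(m)}\eta\,dx \to \int\nabla v\cdot b\eta\,dx$ does not follow: $\nabla v^{(m)}\rightharpoonup\nabla v$ only in $L_2$, while $b\eta\in L_{p'}$ with $p'<2$, and if you also let $b^{(m)}$ vary you have a product of two weakly convergent sequences. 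The integration-by-parts trick is exactly what sidesteps this obstruction, and it is what your argument is missing. Finally, the negation of the conclusion is a two-parameter statement (``for every $\lambda_1,\delta_1$ there is a counterexample''), not a single sequence, and your last paragraph in effect concedes that one must run the argument for fixed data with explicit constants — which is precisely the paper's direct (non-compactness) argument. I would recommend discarding the compactness framing entirely and writing the direct estimate.
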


\begin{proof}  Assume there exist $g\in L_p(B_2)$ and $v\in W^1_p(B_2)$
satisfying \eqref{Equation_v}, \eqref{Assumptions_v},
\eqref{Assumption_g} such that
\begin{equation}
   |\{ \, x\in B_2 : \, v(x)> \la_1 \,
\}| \ \le \ \dl_1.
 \label{le_lambda}
\end{equation}
Multiplying \eqref{Equation_v} by an arbitrary $\eta\in
C_0^\infty(B_2)$ and integrating by parts    we obtain
\begin{equation}
 2\pi \al \int\limits_{\Ga\cap B_2} v(x)\eta(x)\, dl_x \ = \  \int\limits_{B_2 } v\Big(\Delta \eta +
b \cdot \nabla \eta  \Big)~dx \ + \ \int\limits_{B_2 } g\cdot
\nabla \eta~dx. \label{v(0)_estimate}
\end{equation}
Note that $v|_\Ga \ge 1$. Choose  $\eta \in C_0^\infty(B_2)$  so that
$$
\eta = 1 \quad \mbox{on} \quad B ,  \qquad \|\nabla\eta\|_{L_2(B_2)} \, \le \,  4\,  |B_2|^{1/2}, \qquad \|
\eta\|_{C^2(\bar B_2)}\, \le \, c_{\star\star},
$$
where $c_{\star\star}>0$ is
some sufficiently large absolute constant. Then from
\eqref{Assumption_g} and the H\" older inequality we obtain
$$
\Big| ~\int\limits_{B_2 } g\cdot \nabla \eta ~dx~\Big|  \ \le \ \pi
|\al|.
$$
Hence from \eqref{v(0)_estimate} we obtain
\begin{equation}
\pi |\al|   \ \le \  \Big| \, \int\limits_{B_2 } v\Big(\Delta \eta  +
b \cdot \nabla \eta   \Big)~dx ~\Big|. \label{together}
\end{equation}
Denote
$$
B_2 [v> \la_1] := \{~x\in B_2 : \, v(x)> \la_1~\}, \qquad B_2 [v\le \la_1] :=
\{~x\in B_2 : \, v(x)\le \la_1~\}.
$$
From the H\" older inequality we obtain
$$
 \int\limits_{B_2 [v> \la_1]} v\Big(\Delta \eta  +
b^{(\al)}\cdot \nabla \eta   \Big)~dx \  \le   \   \|
v\|_{L_\infty(B_2 )} \, \| \eta \|_{C^2(\bar B_2 )} \,
\Big(1+\|b \|_{L_{p'}(B_2 )} \Big) |B_2 [v>\la_1]|^{\frac
1{p}}.
$$
Taking into account \eqref{Assumptions_v}, \eqref{le_lambda}  and
$\|\eta\|_{C^2(B_2)}\le c_{\star\star}$  we conclude
$$
\Big| \int\limits_{B_2 [v> \la_1]} v\Big(\Delta \eta  + b  \cdot
\nabla \eta  \Big)~dx \, \Big|  \  \le   \  2c_{\star\star} \,
\Big(1+\|b \|_{L_{p'}(B_2 )} \Big) \dl_1^{\frac 1p}.
$$
  On the other hand
$$
\gathered  \Big|\int\limits_{B_2 [v\le \la_1]} v\Big(\Delta \eta  +
b \cdot \nabla \eta  \Big)~dx \Big| \  \le \ c_{\star\star}
\, \Big(1+\|b \|_{L_{1}(B_2 )} \Big) \, \la_1.
\endgathered
$$
Finally, we obtain
$$
|\al|   \  \le   \  2c_{\star\star} \,
\Big(1+\|b \|_{L_{p'}(B_2 )} \Big)  \dl_1^{\frac 1p} \ + \
c_{\star\star} \, \Big(1+\|b  \|_{L_{1}(B_2 )} \Big) \, \la_1.
$$
This inequality leads to  the contradiction if we fix values of
$\la_1$, $\dl_1 \in (0,1) $ so that
$$
 2c_{\star\star} \,
\Big(1+\|b \|_{L_{p'}(B_2 )} \Big) \dl_1^{\frac 1p} \ + \
 c_{\star\star} \, \Big(1+\|b  \|_{L_1(B_2 )}  \Big) \, \la_1  \ < \ \pi
|\al|.
$$
\end{proof}

\begin{proposition}\label{Weak_Harnack_1}  Assume $\Om\subset \Bbb R^3$ is a bounded domain which contains the origin. Denote by $\dl_1\in (0,1)$ and $\la_1\in (0,1)$ the constants from Proposition \ref{Weak_Harnack}.
  Assume $b\in L_{w}^{2,1}(\Om)$    satisfies $\div b =0$ in $\mathcal D'(\Om)$. Assume         $f\in L_p(\Om)$ with  $p>2$ and  let $u$ be a $p$-weak solution to the problem \eqref{Equation} such that
  $$
  u|_{\Ga } \, = \, 0.
  $$
  Assume  $\al\not =0$, $x_0\in \Ga$ and $B_{4R}(x_0)\subset \Om$.  Denote
    \begin{equation}\label{Define_k_0}
  k_0 \ := \   \frac12\, \Big(M(x_0, 4R)   +   m(x_0,4R)\Big),
  \end{equation}
  and
 \begin{equation}\label{Define_k_1}
  k_1 \ := \   M(x_0, 4R) \, - \,  \frac{\la_1}2  \, \om(x_0,4R) .
  \end{equation}
   If  $
  k_0   \, \ge \, 0
  $
  then either
   \begin{equation}\label{What we need}
  \big| \{ \, x\in B_{2R}(x_0): ~u(x) \le  k_1  \, \} \big|\ \ge
\ \dl_1 \, |B_{2R}|
  \end{equation}
   or
   \begin{equation}\label{Assumption_f}
  \om(x_0, 4R) \, \le \, \frac{2 R^{-1/2}}{c_\star |\al|} \, \| f\|_{L_2(B_{4R}(x_0))}
   \end{equation}
   where $c_\star$ is defined in \eqref{Assumption_g}.

\end{proposition}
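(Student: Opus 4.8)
The plan is to reduce Proposition \ref{Weak_Harnack_1} to Proposition \ref{Weak_Harnack} by rescaling to the unit ball and normalizing the oscillation. First I would introduce the rescaled function. Set $r:=|x'|$ and define, for $x\in B_2$,
\[
v(x) \ := \ \frac{2}{\om(x_0,4R)}\,\Big( u(x_0 + 2R x) - m(x_0,4R) \Big).
\]
Since $x_0\in\Ga$ so that $(x_0)'=0$, the map $x\mapsto x_0+2Rx$ carries $\Ga$ to $\Ga$ and $B_2$ into $B_{4R}(x_0)\subset\Om$, hence $v$ is well-defined. A direct change of variables shows $v\in W^1_p(B_2)$ solves \eqref{Equation_v} with the rescaled drift $b(x):=2R\,b(x_0+2Rx)$ (still divergence free on $B_2$), the rescaled singular coefficient keeping the same form $-\al x'/|x'|^2$ because that field is $(-1)$-homogeneous, and with the rescaled right-hand side $g(x):= \frac{4R}{\om(x_0,4R)}\, f(x_0+2Rx)$. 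One checks that $\|b\|_{L_{p'}(B_2)} = c\,(2R)^{1-3/p'}\|b\|_{L_{p'}(B_{4R}(x_0))}$, which is controlled by $\|b\|_{L^{2,1}_w(\Om)}$ via Proposition \ref{Holder_inequality}; to keep $\dl_1,\la_1$ depending only on a fixed bound one should, as the statement already allows, absorb this into the constants (or invoke the embedding of the critical Morrey quasinorm).

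Next I would verify the hypotheses \eqref{Assumptions_v} of Proposition \ref{Weak_Harnack} in the normalized picture. By Theorem \ref{Theorem_2} $u$ is bounded, so $m(x_0,4R)$ and $M(x_0,4R)$ are finite and $m(x_0,4R)\le u\le M(x_0,4R)$ on $B_{4R}(x_0)$; hence $0\le v\le 2$ on $B_2$. The condition $u|_\Ga=0$ together with $k_0\ge 0$, i.e. $M(x_0,4R)+m(x_0,4R)\ge 0$, gives $m(x_0,4R)\ge -M(x_0,4R)$; combined with $u|_\Ga=0$ this yields on $\Ga\cap B_2$
\[
v \ = \ \frac{2}{\om(x_0,4R)}\big(0-m(x_0,4R)\big) \ = \ \frac{-2m(x_0,4R)}{M(x_0,4R)-m(x_0,4R)} \ \ge \ 1,
\]
the last inequality being exactly $-m(x_0,4R)\ge M(x_0,4R)-(-m(x_0,4R))$, i.e. $-m(x_0,4R)\ge M(x_0,4R)+m(x_0,4R)=2k_0\ge 0$ rearranged — more carefully, $-m \ge \tfrac12\om$ iff $-m\ge \tfrac12(M-m)$ iff $-m\ge M$ iff $k_0\le 0$; so one should instead note $v|_\Ga\ge 1 \iff -m\ge \tfrac12\om \iff M+m\le 0 \iff k_0\le 0$. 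Since here $k_0\ge0$, the correct reading is that $v|_\Ga = -2m/\om$ and the relevant bound $v|_\Ga\ge 1$ fails in general — so the reduction must instead be applied to $\tilde u := -u$, whose sup on $B_{4R}(x_0)$ is $-m(x_0,4R)$ and which still vanishes on $\Ga$; with $k_0\ge0$ one has $-m\ge M\ge 0$ wrong again. I will need to track signs carefully here: the clean statement is that one of $u$ or $-u$ has the property that its value on $\Ga$ (namely $0$) sits in the upper half of its range on $B_{4R}(x_0)$, and $k_0\ge0$ selects which, after which $v\ge1$ on $\Ga$ holds for that choice. This sign bookkeeping, together with reconciling $k_1$ versus $\la_1\,\om/2$, is the step I expect to be the main (if purely technical) obstacle.

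Finally, with $v$ satisfying \eqref{Equation_v} and \eqref{Assumptions_v}, Proposition \ref{Weak_Harnack} gives a dichotomy governed by \eqref{Assumption_g}: either $\|g\|_{L_2(B_2)}>c_\star|\al|$, which unwinds (computing $\|g\|_{L_2(B_2)} = \tfrac{4R}{\om(x_0,4R)}(2R)^{3/2-1}\|f\|_{L_2(B_{4R}(x_0))}$, so $\|g\|_{L_2(B_2)} = c\,R^{3/2}\om(x_0,4R)^{-1}\|f\|_{L_2(B_{4R}(x_0))}$ — constants to be matched to the claimed $2R^{-1/2}/(c_\star|\al|)$) precisely to the alternative \eqref{Assumption_f}; or else $|\{x\in B_2: v(x)\ge\la_1\}|\ge\dl_1$. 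In the latter case, translating $v\ge\la_1$ back through the definition of $v$ gives $u(x_0+2Rx)\le M(x_0,4R)-\tfrac{\la_1}{2}\om(x_0,4R)=k_1$ on a set of measure $\ge\dl_1$ in $B_2$, hence on a set of measure $\ge\dl_1|B_{2R}|$ in $B_{2R}(x_0)$, which is \eqref{What we need} (again after the sign reconciliation for the case where $-u$ was used, in which $\{v\ge\la_1\}$ becomes $\{u\le k_1\}$ or its reflection — the statement is symmetric enough that $k_1$ as defined is the right threshold). Matching the numerical constant in \eqref{Assumption_f} to $2R^{-1/2}/(c_\star|\al|)$ is a bookkeeping check I would carry out at the end once the scaling exponents are pinned down.
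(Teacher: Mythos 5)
Your strategy---rescaling to the unit configuration and invoking Proposition \ref{Weak_Harnack}---is exactly the paper's, but two specific choices in your setup break the argument and must be corrected.

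First, the rescaling factor. You set $y = x_0 + 2Rx$, which sends $B_2$ onto $B_{4R}(x_0)$. Since Proposition \ref{Weak_Harnack} delivers its density estimate on $B_2$, your conclusion is a density bound on $B_{4R}(x_0)$, which does \emph{not} imply \eqref{What we need}: the set $\{u\le k_1\}$ could be concentrated in the annulus $B_{4R}(x_0)\setminus B_{2R}(x_0)$. The correct rescaling is $u^R(x) := u(x_0 + Rx)$, so that $B_2$ corresponds to $B_{2R}(x_0)$; the hypothesis $B_{4R}(x_0)\subset\Om$ still guarantees the rescaled equation on $B_2$, and since $M$, $m$, $\om$ are taken over $B_{4R}(x_0)$ one has $0\le v\le 2$ on $B_2$. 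With $g(x)=-\tfrac{2R}{\om}\,f(x_0+Rx)$ one finds $\|g\|_{L_2(B_2)} = \tfrac{2}{\om}R^{-1/2}\|f\|_{L_2(B_{2R}(x_0))}\le \tfrac{2}{\om}R^{-1/2}\|f\|_{L_2(B_{4R}(x_0))}$, so the negation of \eqref{Assumption_f} gives \eqref{Assumption_g} with the stated constant. (Your displayed power of $R$ for $\|g\|_{L_2(B_2)}$ is also off --- the Jacobian of the change of variables was dropped --- but that is a secondary slip.)

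Second, the normalization. You set $v=\tfrac{2}{\om}(u^R-m)$, correctly observe that then $v|_\Ga=-2m/\om$ so that $v|_\Ga\ge1$ would force $k_0\le0$, and then---instead of adopting the obvious fix---talk yourself out of it. The right choice is
\begin{equation*}
v \ := \ \frac{2\big(M(x_0,4R) - u^R\big)}{\om(x_0,4R)},
\end{equation*}
for which $v|_\Ga=2M/\om\ge 1$ is \emph{equivalent} to $M+m\ge0$, i.e.\ $k_0\ge0$, exactly the hypothesis; and $\{v\ge\la_1\}=\{u^R\le M-\tfrac{\la_1}{2}\om\}=\{u^R\le k_1\}$ directly, with no case split on $\pm u$. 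Your parenthetical ``apply the construction to $\tilde u := -u$'' produces precisely this $v$ after simplification, and the step where you conclude ``$-m\ge M\ge0$ wrong again'' is an algebra error: what $v|_\Ga\ge1$ requires after passing to $\tilde u$ is $-\inf\tilde u\ge\sup\tilde u$, i.e.\ $M\ge -m$, which is $k_0\ge0$, as wanted. With these two corrections the rest of your sketch goes through.
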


\begin{proof} Assume \eqref{Assumption_f} does not hold, i.e.
 \begin{equation}\label{Assumption_not_f}
 R^{-1/2}  \, \| f\|_{L_2(B_{4R}(x_0))} \, \le \, \frac{c_\star |\al|}2 \, \om(x_0, 4R).
   \end{equation}
  For $x\in B_4$ we denote
$$
u^R(x)   =   u(x_0+ Rx), \qquad b_0^R(x) =    R\, b_0(x_0+ Rx), \qquad f^R(x)   =  R f(x_0+Rx) .
$$
Then $u^R$ is a solution to
$$
-\Delta u^R + b_0^R \cdot \nabla u^R \ = \ -\div  f^R
\qquad\mbox{in} \quad B_2
$$
and, moreover,
$$
\| b_0^R\|_{L_{p'}(B_2)}  \ \le \ c\, \| b_0\|_{L_w^{2,1}(\Om)}
$$
with some $c>0$ independent of $R$.
Define   $v$ and $g$ so that
$$
\gathered  v(x) \  := \ 2~\frac{M(x_0, 4R) -u^R (x)}{\om(x_0, 4R)}, \qquad g(x) \  := \ -2~\frac{f^R (x)}{\om(x_0, 4R)}.
\endgathered
$$
From \eqref{Assumption_not_f} we conclude that $g$ satisfies \eqref{Assumption_g}. Moreover,  $v$ is a $p$-weak solution to the equation \eqref{Equation_v} which satisfies
$$
0\le v \le 2, \qquad v|_{\Ga } \, \ge \, 1.
$$
Hence
 $$
  \big| \{ \, x\in B_2: ~v(x) \ge \la_1  \, \} \big|\ \ge
\ \dl_1 
  $$
  which gives \eqref{What we need}.
\end{proof}

Now we can prove the estimates of oscillation for functions belonging to various  modified De Giorgi classes. We will distinguish between the following three cases which  are motivated by the properties of  a $p$-weak solution $u$  to the problem \eqref{Equation} (we will proof this properties  in Section \ref{Proof_T3}):
\begin{itemize}
  \item  Assume $x_0$ is an internal point away from the singular line $\Ga$. In this case we assume  $B_R(x_0)\subset \Om \setminus \Ga$. In this case we will show that   $\pm u \in DG(B_R(x_0); k_0)$ for any $ k_0 \in \Bbb R$, i.e. away from the singular line $\Ga$ a $p$-weak solution to the problem \eqref{Equation} belongs to the De Giorgi class with arbitrary starting level $k_0$. In this case the estimate of the oscillation of $u$ in the ball $B_R(x_0)$ is standard.

  \item Assume $x_0$ is an internal point which belongs to the singular line $\Ga$. In this case we assume  $B_R(x_0)\subset \Om$ and  $x_0\in \Ga$. Due to the assumption \eqref{Zero_on_Gamma} this case would correspond to the condition     $\pm u \in DG(\Om; 0)$ (i.e. the De Griorgi class $DG(\Om; k_0)$ with the fixed starting level  $k_0=0$).  In this case the estimate of the oscillation of $u$ in the ball $B_R(x_0)$ is based on Proposition \ref{Weak_Harnack_1}.

  \item Finally, consider a boundary point $x_0\in \cd \Om$ (including the case when $x_0\in \cd\Om\cap \Ga$).
   In this case we take arbitrary   $\Om_0$  such that $\Om \Subset \Om_0 $ and  denote by $ \bar u $   the zero extension of $u$ onto $\Om_0\setminus \Om$.  So, this case
    also  corresponds to the condition
    $\pm \bar  u \in DG(\Om_0; 0)$ and due to the Dirichlet condition in \eqref{Equation} and the Lipschitz continuity of $\cd \Om$ we may  assume  that $\tilde u$ vanishes on a fixed  portion of $B_R(x_0)\subset \Om_0$ (so the density condition \eqref{Density_condition} in this case is satisfied for free because of the Dirichlet condition in \eqref{Equation}).

\end{itemize}

We start from the  oscillation estimate for the non-singular internal points. The proof of this estimate is standard and we present it only for readers' convenience.

\begin{lemma}\label{osc_lemma}  Let $\Om\subset \Bbb R^n$ be a bounded Lipschitz domain, $n\ge 2$, and $\pm u\in DG(\Om, k_0)$ for any $k_0\in \Bbb R$, where $DG(\Om; k_0)$ is the De Giorgi class with the parameters  $\ga$, $F$, $\be$, $q$  (see Definition \ref{Def_DG}). There exists a constant  $\si\in (0,1)$   depending only on  $\ga$, $\be$, $q$, such that   for any   $B_{4R}(x_0) \subset \Om $
\begin{equation}\label{Est_osc_1}
\operatorname{osc}\limits_{B_R(x_0) } u \ \le \  \si ~\operatorname{osc}\limits_{B_{4R}(x_0)} u    \, + \,  c_2\, F\, R^{1-\frac nq}
\end{equation}
where $c_2>0$ is a constant from \eqref{maximum_decay_estimate}.
\end{lemma}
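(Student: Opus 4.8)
The plan is the classical De Giorgi oscillation reduction: apply Proposition \ref{Density_lemma} to whichever of $u$ and $-u$ is "far" from its extremal value on at least half of the ball $B_{2R}(x_0)$, and then add the two resulting decay inequalities. First I would fix $B_{4R}(x_0)\subset\Om$ and abbreviate $M:=M(x_0,4R)=\sup_{B_{4R}(x_0)}u$, $m:=m(x_0,4R)=\inf_{B_{4R}(x_0)}u$, so that $\osc_{B_{4R}(x_0)}u=M-m$. Set $k_0:=\tfrac12(M+m)$ as the midlevel. Then at every point of $B_{2R}(x_0)$ at least one of the inequalities $u(x)\le k_0$ or $u(x)\ge k_0$ holds; hence at least one of the two sets $\{x\in B_{2R}(x_0):u(x)\le k_0\}$ and $\{x\in B_{2R}(x_0):u(x)\ge k_0\}=\{x\in B_{2R}(x_0):(-u)(x)\le -k_0\}$ has measure $\ge\tfrac12|B_{2R}|$. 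Choose $\dl:=\tfrac12$ in Proposition \ref{Density_lemma}; this fixes $\theta\in(0,1)$ depending only on $\ga,\be,q$, together with the constant $c_1$.

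Consider first the subcase where $|\{x\in B_{2R}(x_0):u(x)\le k_0\}|\ge\tfrac12|B_{2R}|$. Since $\pm u\in DG(\Om,k)$ for every $k\in\mathbb R$, in particular $u\in DG(\Om,k_0)$, and Proposition \ref{Density_lemma} (with starting level $k_0$) gives
\begin{equation}
\sup_{B_R(x_0)}u\ \le\ (1-\theta)\,M\ +\ \theta\,k_0\ +\ c_1\,F\,R^{1-\frac nq}.
\label{PlanUp}
\end{equation}
For the lower bound I would apply the same proposition to $-u\in DG(\Om,-k_0)$; note $|\{x\in B_{2R}(x_0):(-u)(x)\le -k_0\}|=|\{u\ge k_0\}|\ge\tfrac12|B_{2R}|$ automatically in this subcase is \emph{not} guaranteed, so here I instead use that the complement $\{u\le k_0\}$ having large measure forces $-u$'s sublevel set $\{-u\le -k_0\}$... — cleaner is to symmetrize: in \emph{either} subcase, one of $u$, $-u$ has its $k_0$-(resp.\ $-k_0$-)sublevel set of measure $\ge\tfrac12|B_{2R}|$, yielding the corresponding one of
\begin{equation}
\sup_{B_R(x_0)}u\ \le\ (1-\theta)M+\theta k_0+c_1FR^{1-\frac nq},
\qquad
-\inf_{B_R(x_0)}u\ \le\ (1-\theta)(-m)+\theta(-k_0)+c_1FR^{1-\frac nq};
\label{PlanBoth}
\end{equation}
and the \emph{other} one of the pair follows because the complementary sublevel set, while possibly small, still has the full ball covered — so I would instead run the argument twice using that $\{u\le k_0\}\cup\{-u\le -k_0\}=B_{2R}(x_0)$ to get \emph{whichever} inequality in \eqref{PlanBoth} corresponds to the large set, and obtain the missing one by a second, trivial application. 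In fact the honest route (and the one I would write) is: whichever set is large gives its inequality in \eqref{PlanBoth}; for the other, replace $k_0$ by the trivial estimate $\sup_{B_R(x_0)}(\pm u)\le\sup_{B_{4R}(x_0)}(\pm u)$, which is always true and is exactly \eqref{PlanBoth} with $\theta=0$ — and since $\theta\le 1$, the stated inequality \eqref{PlanBoth} with the \emph{claimed} $\theta$ also holds for that side provided one checks $(1-\theta)M+\theta k_0\ge$ the true sup, i.e.\ $\sup_{B_R}u\le M$; this holds because $k_0\le M$. So both inequalities in \eqref{PlanBoth} hold regardless of which subcase occurs.

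Finally I would add the two inequalities in \eqref{PlanBoth}. The left side is $\sup_{B_R(x_0)}u-\inf_{B_R(x_0)}u=\osc_{B_R(x_0)}u$. On the right, $(1-\theta)(M-m)+\theta(k_0-k_0)+2c_1FR^{1-\frac nq}=(1-\theta)\osc_{B_{4R}(x_0)}u+2c_1FR^{1-\frac nq}$, since the $\theta k_0$ and $-\theta k_0$ terms cancel. Setting $\si:=1-\theta\in(0,1)$ and $c_2:=2c_1$ gives exactly \eqref{Est_osc_1}, with $\si$ and $c_2$ depending only on $\ga,\be,q$ as required. The only genuinely delicate point — the "main obstacle" — is the bookkeeping just described: Proposition \ref{Density_lemma} only yields decay for the function whose sublevel set is large, so one must argue carefully that the \emph{other} of the two inequalities in \eqref{PlanBoth} also holds (with the same $\theta$) by the trivial monotonicity bound $\sup_{B_R}(\pm u)\le\sup_{B_{4R}}(\pm u)$ together with $m\le k_0\le M$; once this is in place the cancellation of the midlevel terms is immediate and the proof concludes. $\qquad\square$
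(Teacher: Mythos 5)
Your overall strategy is the same as the paper's: split on which of the two sets $\{u\le k_0\}$, $\{u\ge k_0\}$ at the median level $k_0=\tfrac12(M+m)$ occupies at least half of $B_{2R}(x_0)$, and then invoke Proposition~\ref{Density_lemma}. However, the step where you try to secure \emph{both} inequalities in your display \eqref{PlanBoth} is wrong, and this is a genuine gap. Proposition~\ref{Density_lemma} only delivers the inequality for the function whose sublevel set is large; the "other" inequality does \emph{not} follow from the trivial bound. Concretely, you need $\sup_{B_R}u\le(1-\theta)M+\theta k_0+c_1FR^{1-n/q}$, and you only know $\sup_{B_R}u\le M$. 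But since $k_0\le M$, one has $(1-\theta)M+\theta k_0\le M$, so the target right-hand side is \emph{smaller} than (or comparable to) the trivial bound; the implication you assert runs the wrong way. If $F$ is small, the missing inequality can simply fail.

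The fix is not to attempt the sum of two decay inequalities at all. In the case $\big|\{u\le k_0\}\cap B_{2R}\big|\ge\tfrac12|B_{2R}|$, apply Proposition~\ref{Density_lemma} to $u$ to obtain
\begin{equation*}
\sup_{B_R(x_0)}u\ \le\ (1-\theta)\,M\ +\ \theta\,k_0\ +\ c_1\,F\,R^{1-\frac nq},
\end{equation*}
and then subtract the \emph{trivial} inequality $\inf_{B_R(x_0)}u\ \ge\ m$ (not another decay estimate). Since $(1-\theta)M+\theta k_0-m=(1-\tfrac\theta2)(M-m)$, you get
\begin{equation*}
\osc_{B_R(x_0)}u\ \le\ \Big(1-\tfrac\theta2\Big)\osc_{B_{4R}(x_0)}u\ +\ c_1\,F\,R^{1-\frac nq}.
\end{equation*}
In the complementary case run the identical argument with $v=-u\in DG(\Om,-k_0)$, subtracting the trivial bound $\sup_{B_R}u\le M$. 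Either way you obtain \eqref{Est_osc_1} with $\si=1-\tfrac\theta2\in(0,1)$ (rather than the $\si=1-\theta$ and $c_2=2c_1$ your proposal claims). This is exactly the paper's proof; the cancellation of the $\theta k_0$ terms you were aiming for is replaced by the algebraic identity $(1-\theta)M+\theta k_0-m=(1-\tfrac\theta2)(M-m)$.
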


\begin{proof}
Define $k_0\in \Bbb R$ by \eqref{Define_k_0} and consider the case:
$$
 \big| \{ \, x\in B_{2R}(x_0): ~u(x) \le k_0  \, \} \big|\ \ge \ \tfrac 12~|B_{2R}|.
$$
Then from \eqref{maximum_decay_estimate} we obtain
$$
M(x_0, R) \ \le \ (1-\theta)\, M(x_0, 4R) + \theta \, k_0 + c_2\, F\, R^{1-\frac nq}.
$$
Subtracting from both sides $m(x_0, 4R)$ we arrive at
\begin{equation}\label{oscillation_decay_estimate}
\om(x_0, R) \, \le \, (1-\tfrac{\theta}2)\, \om(x_0, 4R) \, + \, c_2\, F\, R^{1-\frac nq}.
\end{equation}
In  the second case
$$
 \big| \{ \, x\in B_{2R}(x_0): ~u(x) \ge k_0  \, \} \big|\ \ge \ \tfrac 12~|B_{2R}|
$$
we denote
$
v:= -u$,  $l_0:=-k_0$ and obtain   $v\in DG(\Om, l_0)$ and
 $$
 \big| \{ \, x\in B_{2R}(x_0): ~v(x) \le l_0  \, \} \big|\ \ge \ \tfrac 12~|B_{2R}|.
$$
Then from \eqref{maximum_decay_estimate}
we again arrive at \eqref{oscillation_decay_estimate}.
 \end{proof}

Now we present the  oscillation estimate for    points on the singular curve $\Ga$:

\begin{lemma}\label{osc_lemma_curve}  Assume $\Om\subset \Bbb R^3$ is a bounded domain which contains the origin. Denote by $\dl_1\in (0,1)$ and $\la_1\in (0,1)$ the constants from Proposition \ref{Weak_Harnack}.
  Assume $b\in L_{w}^{2,1}(\Om)$    satisfies $\div b =0$ in $\mathcal D'(\Om)$. Assume         $f\in L_p(\Om)$ with  $p>2$ and  let $u$ be a $p$-weak solution to the problem \eqref{Equation} such that
  $$
  u|_{\Ga } \, = \, 0.
  $$
  Assume  $\al\not =0$, $x_0\in \Ga$ and $B_{4R}(x_0)\subset \Om$.
  There is a constant $\si\in (0,1)$ depending only on $\|b\|_{L_w^{2,1}(\Om)}$  and $\al$  such that   for  any $x_0\in \Ga$ and any   $B_{4R}(x_0) \subset \Om $  we have
\begin{equation}\label{Est_osc_2}
\operatorname{osc}\limits_{B_R(x_0) } u \ \le \  \si ~\operatorname{osc}\limits_{B_{4R}(x_0)} u    \, + \,  c_2\, F\, R^{1-\frac 3q}
\end{equation}
where $c_1>0$ depends only on $n$, $\ga$, $\be$, $q$ and $\al$.
\end{lemma}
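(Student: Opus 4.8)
The plan is to reduce this to an oscillation-decay estimate of the type already obtained in Lemma \ref{osc_lemma}, but now using the weak Harnack alternative of Proposition \ref{Weak_Harnack_1} in place of the ``half-measure'' dichotomy that was available for non-singular interior points. First I would assume, as we may, that the bad alternative \eqref{Assumption_f} fails, i.e.
\[
\om(x_0,4R) \ > \ \frac{2R^{-1/2}}{c_\star|\al|}\,\|f\|_{L_2(B_{4R}(x_0))},
\]
since otherwise \eqref{Est_osc_2} holds trivially once $F$ is taken to dominate $R^{-1/2+ n/q}\|f\|_{L_2}$ (recall $q>3=n$, so $R^{1-3/q}$ does control $R^{-1/2}\|f\|_{L_2}$ on $B_{4R}$ after a H\"older step, possibly at the cost of passing to a slightly smaller exponent; this bookkeeping is routine). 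Under this assumption Proposition \ref{Weak_Harnack_1} gives the density estimate \eqref{What we need}:
\[
\big|\{\,x\in B_{2R}(x_0):\ u(x)\le k_1\,\}\big|\ \ge\ \dl_1\,|B_{2R}|,
\qquad k_1 = M(x_0,4R)-\tfrac{\la_1}{2}\,\om(x_0,4R).
\]

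Next I would invoke the fact (to be established in Section \ref{Proof_T3}, and which we are entitled to assume) that for $x_0\in\Ga$ and $B_{4R}(x_0)\subset\Om$ one has $\pm u\in DG(B_{4R}(x_0);0)$ with the stated structural constants, and moreover $k_0$ defined by \eqref{Define_k_0} satisfies $k_0\ge 0$ (at least after choosing the sign of $u$ appropriately; if $k_0<0$ one works with $-u$, whose starting level $-k_0>0$, and the symmetric argument applies). The key point is that $u\in DG(B_{4R}(x_0);k_1)$ as well, since $k_1\ge k_0\ge 0$ and the De Giorgi inequality \eqref{DG} is monotone in the level. Now apply Proposition \ref{Density_lemma} with this level $k_1$ in the role of $k_0$ and with $\dl=\dl_1$: the density condition \eqref{Density_condition} is exactly \eqref{What we need}, so we obtain
\[
\sup\limits_{B_R(x_0)}u\ \le\ (1-\theta)\sup\limits_{B_{4R}(x_0)}u\ +\ \theta\,k_1\ +\ c_1\,F\,R^{1-\frac 3q},
\]
with $\theta\in(0,1)$ depending only on $\dl_1$ and the De Giorgi constants. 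Substituting $k_1 = M(x_0,4R)-\tfrac{\la_1}{2}\om(x_0,4R)$ and subtracting $m(x_0,4R)$ from both sides yields
\[
\om(x_0,R)\ \le\ \Big(1-\tfrac{\theta\la_1}{2}\Big)\,\om(x_0,4R)\ +\ c_1\,F\,R^{1-\frac 3q},
\]
which is \eqref{Est_osc_2} with $\si:=1-\tfrac{\theta\la_1}{2}\in(0,1)$.

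The dependence of $\si$ on $\|b\|_{L_w^{2,1}(\Om)}$ and $\al$ enters through $\dl_1,\la_1$ (Proposition \ref{Weak_Harnack}) and through the De Giorgi constants $\ga,\be$ (which, in the forthcoming Section \ref{Proof_T3} construction, depend on the Fefferman--Chiarenza--Frasca bounds, hence on $\|b\|_{L_w^{2,1}}$), so the claimed dependence of the final constants is consistent. The only genuine obstacle I anticipate is the sign bookkeeping: Proposition \ref{Weak_Harnack_1} is stated with the hypothesis $k_0\ge 0$, so one must argue that after possibly replacing $u$ by $-u$ we may assume this, using $u|_\Ga=0$ (which gives $m(x_0,4R)\le 0\le M(x_0,4R)$, hence one of $k_0,-k_0$ is nonnegative). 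The matching rescaled estimate for $-u$ uses the symmetry $\al\mapsto\al$, $f\mapsto -f$ of \eqref{Equation_v}, so both cases reduce to the same Proposition \ref{Weak_Harnack}. Everything else is the routine iteration-of-oscillation argument already carried out in Lemma \ref{osc_lemma}.
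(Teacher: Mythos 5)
Your proposal matches the paper's proof essentially step for step: invoke $\pm u\in DG(B_{4R}(x_0);0)$ from Proposition~\ref{Singular_Curve_DG}, apply the dichotomy of Proposition~\ref{Weak_Harnack_1} (disposing of the $\|f\|_{L_2}$-dominant case by the same H\"older estimate, which in fact costs no exponent loss), then use the density estimate \eqref{What we need} together with Proposition~\ref{Density_lemma} at level $k_1$ to get the oscillation decay, handling $k_0<0$ by the sign flip $u\mapsto -u$. The only cosmetic difference is that the paper spells out the $k_0\le 0$ branch with explicit notation $v,l_0,l_1$, while you treat it by symmetry; also your remark that $u|_\Ga=0$ forces $m\le 0\le M$ is unnecessary (one of $\pm k_0$ is nonnegative for any real $k_0$), but it does no harm.
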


\begin{proof}
Assume  \ $u $ is a $p$-weak solution to \eqref{Equation} and $u|_\Ga=0$. In the next section   (see Proposition \ref{Singular_Curve_DG}) we will show that in this case
$ \pm u \in DG(B_{4R}(x_0); 0)$. Define $k_0\in \Bbb R$ and $ k_1\in \Bbb R$ by \eqref{Define_k_0} and \eqref{Define_k_1} respectively   and assume $k_0\ge 0$.
By Proposition \ref{Weak_Harnack_1} either \eqref{What we need} of \eqref{Assumption_f} hold. In the case of \eqref{Assumption_f} we obtain by the H\" older inequality for $q>3$
$$
 \om(x_0, 4R) \, \le \, \frac{2}{c_\star |\al|} \, \| f\|_{L_2(B_{4R}(x_0))} \,  R^{-\frac 12} \, \le \,  \frac{c}{c_\star |\al|} \, \| f\|_{L_q(B_{4R}(x_0))} \, R^{1-\frac 3q}
$$
and hence \eqref{Est_osc_2} follows. Assume now \eqref{What we need} holds.
Note that $k_1\ge k_0\ge 0$ and hence $u\in DG(\Om, k_1)$. Hence from Proposition \ref{Density_lemma} we conclude
 $$
M(x_0, R) \ \le \ (1-\theta)\, M(x_0, 4R) + \theta \, k_1 + c_1\, F\, R^{1-\frac 3q}
$$
and taking into account \eqref{Define_k_1}
we arrive at
\begin{equation}
\om(x_0, R) \, \le \, (1-\tfrac{\la_1\theta } 2 )\, \om(x_0, 4R) \, + \, c_2 \, F\, R^{1-\frac 3q}.
\label{omega_estimate}
\end{equation}
Now consider the case  $k_0   \le  0 $. Denote
$
v:= -u$, $l_0:=-k_0$
\begin{equation}
  l_1 \ := \   -m(x_0, 4R) \, - \,  \frac{\la_0}2  \, \om(x_0, 4R).
  \label{Define_l_1}
\end{equation}
Note that    $v\in DG(\Om; 0)$ and $l_0\ge 0$.  Applying  Proposition \ref{Weak_Harnack_1}  for $v$ we obtain either \eqref{Assumption_f}
or
\begin{equation}\label{What we need_v}
  \big| \{ \, x\in B_{2R}(x_0): ~v(x) \le  l_1  \, \} \big|\ \ge
\ \dl_1 \, |B_{2R}|
  \end{equation}
hold. In the case of \eqref{Assumption_f} we obtain \eqref{Est_osc_2} immediately.  In the case of \eqref{What we need_v} we apply Proposition \ref{Density_lemma} for $v$ and  conclude
 $$
-m(x_0, R) \ \le \ -(1-\theta)\, m(x_0, 4R) + \theta \, l_1 + c_1\, F\, R^{1-\frac 3q}
$$
and taking into account \eqref{Define_l_1}
we arrive at \eqref{omega_estimate} again.
\end{proof}

Finally we present the oscillation estimate near the boundary:

\begin{lemma}\label{osc_lemma_boundary}
Let $\Om\subset \Bbb R^n$ be a bounded Lipschitz domain, $n\ge 2$, and $\pm u\in DG(\Om,  0)$ where $DG(\Om; 0)$ is the De Giorgi class with the parameters  $\ga$, $F$, $\be$, $q$  and the initial level $k_0=0$ (see Definition \ref{Def_DG}). For any $\dl>0$ there exists a constant  $\si\in (0,1)$   depending only on  $\dl$, $\ga$, $\be$, $q$, such that   if for   some  $B_{4R}(x_0) \subset \Om $ the estimate
 \begin{equation}\label{Density_condition_0}
  \big| \{ \, x\in B_{2R}(x_0): ~u(x) =0   \, \} \big|\ \ge
\ \dl \, |B_{2R}|
\end{equation}
 is valid
 then \eqref{Est_osc_1} holds.
\end{lemma}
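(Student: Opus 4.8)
The plan is to deduce \eqref{Est_osc_1} directly from the weak maximum principle for De Giorgi classes, Proposition \ref{Density_lemma}, applied twice: once to $u$ and once to $v:=-u$, in both cases with the fixed starting level $k_0=0$. The point is that the density hypothesis \eqref{Density_condition_0} supplies the density condition \eqref{Density_condition} needed in Proposition \ref{Density_lemma} for \emph{both} of these functions simultaneously. Indeed, on $B_{2R}(x_0)$ the set where $u=0$ is contained both in $\{\,x:u(x)\le 0\,\}$ and in $\{\,x:-u(x)\le 0\,\}$, so \eqref{Density_condition_0} yields at once
\[
\big|\{\,x\in B_{2R}(x_0):u(x)\le 0\,\}\big|\ \ge\ \dl\,|B_{2R}|,\qquad \big|\{\,x\in B_{2R}(x_0):v(x)\le 0\,\}\big|\ \ge\ \dl\,|B_{2R}|.
\]
We may and do assume $\dl\in(0,1)$: for $\dl\ge 1$ the hypothesis is either vacuous or forces $u=0$ a.e.\ on $B_{2R}(x_0)$, and one then argues with any smaller value.

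First I would invoke Proposition \ref{Density_lemma}. Since $\pm u\in DG(\Om,0)$ with one and the same set of parameters $\ga$, $\be$, $q$, $F$, and we feed in the same $\dl$, the proposition produces a single $\theta\in(0,1)$, depending only on $\dl$, $\ga$, $\be$, $q$, for which \eqref{maximum_decay_estimate} holds on $B_{4R}(x_0)\subset\Om$ with $k_0=0$ both for $u$ and for $v$. The instance for $u$ reads
\[
\sup\limits_{B_R(x_0)}u\ \le\ (1-\theta)\,\sup\limits_{B_{4R}(x_0)}u\ +\ c_1\,F\,R^{1-\frac nq},
\]
and, using $\sup(-u)=-\inf u$, the instance for $v$ reads
\[
-\inf\limits_{B_R(x_0)}u\ \le\ -(1-\theta)\,\inf\limits_{B_{4R}(x_0)}u\ +\ c_1\,F\,R^{1-\frac nq}.
\]
Adding these two inequalities and using $\sup_{B_\rho(x_0)}u-\inf_{B_\rho(x_0)}u=\operatorname{osc}_{B_\rho(x_0)}u$ for $\rho=R$ and $\rho=4R$, I obtain
\[
\operatorname{osc}\limits_{B_R(x_0)}u\ \le\ (1-\theta)\,\operatorname{osc}\limits_{B_{4R}(x_0)}u\ +\ 2c_1\,F\,R^{1-\frac nq},
\]
which is exactly \eqref{Est_osc_1} with $\si:=1-\theta\in(0,1)$ and $c_2:=2c_1$; this $\si$ depends only on $\dl$, $\ga$, $\be$, $q$, as required.

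There is no genuine obstacle in this argument; it is the near-boundary analogue of Lemma \ref{osc_lemma}. Two points deserve a word. First, the constant $\theta$ coming out of Proposition \ref{Density_lemma} is literally the same for $u$ and for $-u$, since both lie in the same De Giorgi class $DG(\Om,0)$ with identical constants and we use the same $\dl$. Second, no sign information on $\sup_{B_{4R}(x_0)}u$ or $\inf_{B_{4R}(x_0)}u$ is needed, because the identity $\sup_{B_\rho(x_0)}u+\sup_{B_\rho(x_0)}(-u)=\operatorname{osc}_{B_\rho(x_0)}u$ holds unconditionally. The only real difference from Lemma \ref{osc_lemma} is that there the density condition was available only in one of two directions and was selected by a dichotomy, whereas here \eqref{Density_condition_0} provides it in both directions at once, so the case analysis disappears.
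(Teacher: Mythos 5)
Your proof is correct, and it is a mild streamlining of the paper's argument rather than a different method: both proofs rest entirely on Proposition \ref{Density_lemma}. The paper defines the midlevel $k_0:=\tfrac12\big(M(x_0,4R)+m(x_0,4R)\big)$, distinguishes the cases $k_0\ge 0$ and $k_0\le 0$, and in each case applies Proposition \ref{Density_lemma} once (to $u$ at level $k_0$, or to $-u$ at level $-k_0$), using that $\{u=0\}\subset\{u\le k_0\}$ or $\{u=0\}\subset\{-u\le -k_0\}$ according to the sign of $k_0$, and that $DG(\Om,0)\subset DG(\Om,k_0)$ for $k_0\ge 0$; this yields $\si=1-\theta/2$ with $c_2=c_1$. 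You instead note that \eqref{Density_condition_0} already delivers the density condition \eqref{Density_condition} at level $k_0=0$ simultaneously for $u$ and for $-u$, apply Proposition \ref{Density_lemma} to both and add, obtaining $\si=1-\theta$ with $c_2=2c_1$. The dichotomy in the paper's proof is thereby removed, at the cost of doubling the constant in front of $F$; the dependence of $\si$ only on $\dl$, $\ga$, $\be$, $q$ is preserved, since the same $\theta$ serves for both $u$ and $-u$ (they lie in the same class $DG(\Om,0)$ with the same parameters). One should also note, as you implicitly do, that Proposition \ref{Density_lemma} holds without any sign restriction on $\sup_{B_{4R}(x_0)}u-k_0$, since \eqref{maximum_decay_estimate} is trivially true when that quantity is nonpositive; this is what permits the symmetric double application.
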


\begin{proof}
  Define $k_0\in \Bbb R$ by \eqref{Define_k_0} and consider the case $k_0 \ge 0$. Then $$\{ \, x\in B_{2R}(x_0): ~u(x) = 0   \, \} \, \subset \, \{ \, x\in B_{2R}(x_0): ~u(x) \le  k_0   \, \}$$ and we obtain
hence \eqref{Density_condition} holds.
 As $u \in DG(\Om , k_0)$ we obtain
 $$
M(x_0, R) \ \le \ (1-\theta)\, M(x_0, 4R) + \theta \, k_0 + c_1\, F\, R^{1-\frac nq}
$$
and hence
\begin{equation}\label{Leads_to here}
\om(x_0, R) \, \le \, (1-\tfrac\theta2)\, \om(x_0, 4R) \, + \, c_1\, F\, R^{1-\frac nq}.
\end{equation}
In  the case $k_0 \le 0$ we denote
$
v:= -u$, $l_0:=-k_0$.  Then
$l_0\ge 0$ and $v\in DG(\Om, l_0)$. As $$\{ \, x\in B_{2R}(x_0): ~u(x) = 0   \, \} \, \subset \, \{ \, x\in B_{2R}(x_0): ~v(x) \le  l_0   \, \}$$
 we obtain
$$
  \big| \{ \, x\in B_{2R}(x_0): ~v(x) \le  l_0   \, \} \big|\ \ge \ \dl~|B_{2R}|
 $$
 and  hence
 $$
-m(x_0, R) \ \le \ -(1-\theta)\, m(x_0, 4R) + \theta \, l_0 + c_1\, F\, R^{1-\frac nq}
$$
 which again leads to \eqref{Leads_to here}.
\end{proof}

\newpage
\section{H\" older continuity of weak solutions} \label{Proof_T3}
\setcounter{equation}{0}

\bigskip

In this section we show that under assumption on the drift term \eqref{Drift_Specific}, \eqref{Assumption_b}, \eqref{Assumptions_Morrey_space_b}
any $p$-weak solution $u$ to the problem \eqref{Equation} belongs to De Giorgi classes from Section \ref{DG_section}. As a consequence we obtain   the proof of Theorem \ref{Theorem_3}.  First we consider an internal point $x_0$ away from the singular curve $\Ga$.

\begin{proposition}
  \label{Internal_DG}
  Let all assumptions of Theorem \ref{Theorem_3} hold and assume $B_{4R}(x_0)\subset \Om \setminus \Ga$. Then for any $k_0\in \Bbb R$ we have $\pm u\in DG(B_{4R}(x_0);k_0)$  where $DG(\Om; k_0)$ is the De Giorgi class in Definition \ref{Def_DG} with $n=3$,  $F = \| f\|_{L_q(\Om)}$ and some   $\ga>0$, $\be>0$ which depend only on $\| b\|_{L_w^{2,1}(\Om)}$ and $\al$.
\end{proposition}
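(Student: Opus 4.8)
The plan is to derive the Caccioppoli-type inequality \eqref{DG} directly from the identity \eqref{Identity}, and the only place the hypothesis $B_{4R}(x_0)\subset\Om\setminus\Ga$ enters is the observation that on this ball $b_0$ is \emph{divergence free}: by \eqref{Helmgoltz_decomposition} the singular part of $b_0$ equals $\al\nabla h$, by \eqref{h_is harmonic} $\Delta h=0$ in $\Om\setminus\Ga$, and by \eqref{Assumption_b} $\div b=0$, so $\div b_0=0$ in $\mathcal D'(B_{4R}(x_0))$. Moreover $\nabla h=\pm\,x'/|x'|^2\in L^{2,1}_w(\Om)$ with an absolute bound on its quasinorm, so $\|b_0\|_{L^{2,1}_w(\Om)}\le c(\|b\|_{L^{2,1}_w(\Om)}+|\al|)$, and by Proposition \ref{Holder_inequality} one may fix $r\in(\tfrac65,2)$ with $\|b_0\|_{L^{r,3-r}(\Om)}\le c(\|b\|_{L^{2,1}_w(\Om)}+|\al|)$; these are the only data-dependent quantities through which the resulting $\ga$, $\be$ will enter, and in particular they will not see $\dist(B_{4R}(x_0),\Ga)$. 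Thus we are exactly in the situation of \cite{Chernobai_Shilkin_1}, where the analogous estimate was proved under the weaker hypothesis $\div b_0\le 0$; I would follow that argument, indicating below where the divergence condition is used.

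First I would fix $k\in\Bbb R$, $0<\rho<R$ and a cut-off $\zeta\in C_0^\infty(B_R(x_0))$ with $\zeta\equiv1$ on $B_\rho(x_0)$, $0\le\zeta\le1$, $|\nabla\zeta|\le c/(R-\rho)$, taken as an appropriate power of the standard cut-off so that the powers of $\zeta$ match those in Proposition \ref{Morrey_Compactness}. Using $\eta:=\zeta^{2m}(u-k)_+$ as a test function in \eqref{Identity} (admissible by the density remark after Definition \ref{p-weak}, since $u\in L_\infty(\Om)$ by Theorem \ref{Theorem_2}) and writing $w:=(u-k)_+$, $A_k:=\{u>k\}$, $\nabla w=\chi_{A_k}\nabla u$: the principal part produces $\int_{A_k}\zeta^{2m}|\nabla w|^2+2m\int\zeta^{2m-1}w\,\nabla\zeta\cdot\nabla w$; the $f$-term, after H\"older on $A_k$, produces the contribution $F^2|A_k\cap B_R(x_0)|^{1-2/q}$ with $F=\|f\|_{L_q(\Om)}$ plus a term absorbed below; and — the one place $\div b_0=0$ is used — an integration by parts in $\mathcal B[u,\eta]=\tfrac12\int\zeta^{2m}b_0\cdot\nabla(w^2)$ gives $\mathcal B[u,\eta]=-m\int\zeta^{2m-1}w^2\,b_0\cdot\nabla\zeta$ with no bulk divergence term (in \cite{Chernobai_Shilkin_1} the bulk term $-\tfrac12\langle\div b_0,\zeta^{2m}w^2\rangle$ had the good sign and was simply discarded, so the two computations coincide).

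It then remains to estimate the drift term $|\mathcal B[u,\eta]|\le m\int\zeta^{2m-1}|\nabla\zeta|\,|b_0|\,w^2\le\tfrac{c}{R-\rho}\int\zeta^{2m-1}|b_0|\,w^2$, which is the crux. Here one invokes Proposition \ref{Morrey_Compactness} with the fixed $r$ (and a suitable $\theta$); because $b_0$ belongs only to the \emph{critical} weak-Morrey scale, this Fefferman--Chiarenza--Frasca bound is the only substitute available for the $L_\infty$- or subcritical estimates one would normally use, and it is the one point where the argument genuinely departs from the classical De Giorgi scheme. After splitting off the gradient factors by Young's inequality, absorbing a fixed fraction of $\int_{A_k}\zeta^{2m}|\nabla w|^2$ into the left-hand side and performing the usual hole-filling, every surviving term is of the form $\tfrac{c}{(R-\rho)^2}\big(1+\tfrac{R^\be}{(R-\rho)^\be}\big)\int_{A_k\cap B_R}|u-k|^2$, where $\be>0$ and the accumulated constant $\ga$ depend only on $\|b\|_{L^{2,1}_w(\Om)}$ and $\al$ — the non--scale-invariant splitting forced by the Morrey estimate being precisely what the extra factor $1+R^\be/(R-\rho)^\be$ in Definition \ref{Def_DG} is meant to accommodate. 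This yields $u\in DG(B_{4R}(x_0);k_0)$ with the stated parameters, and applying the same reasoning to $-u$ (a $p$-weak solution with $b_0$, $f$ replaced by $-b_0$, $-f$, still divergence free on $B_{4R}(x_0)$ with the same Morrey norm) gives $-u\in DG(B_{4R}(x_0);k_0)$ as well. The main obstacle is this single drift estimate: criticality of $b_0$ forbids any bound whose constant could depend on $\dist(B_{4R}(x_0),\Ga)$, so the whole computation must be routed through Proposition \ref{Morrey_Compactness}, which is exactly why the modified De Giorgi class of Definition \ref{Def_DG} is the right framework here.
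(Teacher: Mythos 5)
Your proposal follows the paper's proof essentially step for step: you use Theorem \ref{Theorem_2} for boundedness, observe that $\div b_0=0$ on $B_{4R}(x_0)\subset\Om\setminus\Ga$, test with $\eta=\zeta^{2m}(u-k)_+$, integrate by parts in the drift term, invoke Proposition \ref{Morrey_Compactness} via the Morrey embedding of Proposition \ref{Holder_inequality} with $r\in(\tfrac65,2)$, and close with Young's inequality to land in the modified De Giorgi class. The only cosmetic difference is your mention of ``hole-filling'': the paper's absorption step yields \eqref{Gives_this} directly since $\zeta\equiv1$ on $B_\rho(x_0)$, so no hole-filling iteration is actually required.
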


In the case $B_{4R}(x_0)\subset \Om \setminus \Ga$ the drift $b_0$ is divergence free in $B_{4R}(x_0)$. Hence  Proposition \ref{Internal_DG} follows from  \cite[Section 5]{Chernobai_Shilkin_1}. Here we outline the proof for reader's convenience.

\begin{proof} From Theorem \ref{Theorem_2} we conclude $u\in L_\infty(\Om)$.
Let us fix some $r\in \left(\frac{6}{5},2\right)$ and  $\theta\in \left(\frac 3r -\frac 32,1\right)$. Then from \eqref{Assumptions_Morrey_space_b_0} and Proposition \ref{Holder_inequality} we obtain
$$b_0\in L^{r, 3-r}(\Om), \qquad \| b_0\|_{L^{r, 3-r}(\Om)} \, \le \,  c\, \| b_0\|_{L^{2, 1}_w(\Om)}.$$
  Take  some radius $\rho<R$ and  a cut-off function $\zeta\in C_0^\infty(B_R(x_0))$ such that  \begin{equation}0\le \zeta\le 1 ,  \qquad \zeta\equiv 1 \quad \mbox{on} \quad B_\rho(x_0), \qquad |\nabla \zeta|\, \le \, \frac{c}{R-\rho}. \label{Cut-off} \end{equation}
   Assume $k\in \Bbb R$ is arbitrary and denote
   \begin{equation}\label{u_tilde}
   \tilde u \, := \, (u-k)_+ \, \equiv \, \max\{ u-k, 0\}, \qquad \tilde u \in L_\infty(\Om)\cap W^1_p(\Om).
   \end{equation}
Fix $m:=\frac 1{1-\theta}$ and note that $2m-1=m(1+\theta)$.
   Take $\eta = \zeta^{2m} \tilde u $  in \eqref{Identity}. Taking into account $$\div b_0=0 \quad \mbox{in} \quad \mathcal D'(B_{4R}(x_0))$$  with the help of integration by parts we obtain
  \begin{equation}\label{Arrive_at_this}
  \mathcal B[u, \eta] \ = \   - \   m\, \int\limits_\Om \zeta^{2m-1} \,  b_0\cdot \nabla \zeta  \, |\tilde u |^2\, dx.
    \end{equation}
  As $2m-1=m(1+\theta)$ we obtain
  \begin{equation}\label{From_this}
  \gathered
  \|\zeta^m\nabla\tilde u\|_{L_2(B_R(x_0))}^2 \ \le  \ c\, \| \tilde u \nabla \zeta\|_{L_2(B_R(x_0))}^2  +   m\, \int\limits_\Om \zeta^{m(1+\theta)}\,  b_0\cdot \nabla \zeta  \, |\tilde u|^2\, dx \ +  \\ + \
  \| f\|_{L_q(\Om)}^2 \, |A_k\cap B_R(x_0) |^{1-\frac 2q}
  \endgathered
  \end{equation}
  where  $A_k:= \{\, x\in \Om: \, u(x)>k\, \}$.   Taking into account  \eqref{Cut-off} and applying the estimate \eqref{Morrey_Compactness_estimate}   we obtain
  $$
  \int\limits_\Om \zeta^{m(1+\theta)}\,  b\cdot \nabla \zeta  \, |\tilde u|^2\, dx  \ \le \  \frac{c\, R^\theta}{ R-\rho  }\, \|b_0\|_{L^{r, 3-r}(\Om)} \, \left\| \nabla (\zeta^m \tilde u )\right\|_{L_2(B_R(x_0))}^{1+\theta }
  \| \tilde u\|_{L_2(B_R(x_0))}^{1-\theta }.
  $$
  Taking arbitrary $\ep>0$ and applying the Young inequality we obtain
  $$
  \gathered
  \int\limits_\Om \zeta^{m(1+\theta)} b\cdot \nabla \zeta  \, |\tilde u |^2\, dx \ \le \ \ep \, \left\| \nabla \left(\zeta^m \tilde u\right)\right\|_{L_2(B_R(x_0))}^2 \ + \\ +  \ \frac{c_\ep}{(R-\rho)^2} \Big( \frac R{R-\rho}\Big)^{\frac {2\theta} {1-\theta}}\,   \|b_0\|_{L^{r, 3-r}(B_R(x_0))}^{\frac { 2} {1-\theta}}     \|   \tilde u \|_{L_2(B_R(x_0))}^2.
  \endgathered
  $$
  So, if we fix sufficiently small $\ep>0$ from \eqref{From_this} for any $k\in \Bbb R$ and $0<\rho<R$ we obtain
  \begin{equation}\label{Gives_this}
  \gathered
 \tfrac 12\,  \| \nabla (u-k)_+\|_{L_2(B_\rho(x_0))}^2 \ \le \\ \le  \ \frac{c}{(R-\rho)^2}\, \left(1+ \Big( \frac R{R-\rho}\Big)^{\frac {2\theta} {1-\theta}}\,   \|b_0\|_{L^{r, 3-r}(\Om)}^{\frac { 2} {1-\theta}} \right)\,  \| (u-k)_+  \|_{L_2(B_R(x_0))}^2    + \\ + \
  \| f\|_{L_q(\Om)}^2 \, |A_k\cap B_R(x_0) |^{1-\frac 2q}.
  \endgathered
  \end{equation}
  Hence we obtain that $u\in DG(\Om)$. Applying the same arguments to $-u$ instead of $u$ we also obtain $-u\in DG(\Om)$.
\end{proof}

 Now  we consider an internal  point $x_0$ laying on  the singular curve $\Ga$.

\begin{proposition}
  \label{Singular_Curve_DG}
  Let all assumptions of Theorem \ref{Theorem_3} hold and assume $x_0\in \Ga$ and $B_{4R}(x_0)\subset \Om $. Then for any $k_0\ge 0 $ we have $\pm u\in DG(B_{4R}(x_0); 0)$  where $DG(\Om;  0)$ is the De Giorgi class in Definition \ref{Def_DG} with $n=3$, $k_0=0$, $F = \| f\|_{L_q(\Om)}$ and some   $\ga>0$, $\be>0$ which depend only on $\| b\|_{L_w^{2,1}(\Om)}$ and  $\al$.
\end{proposition}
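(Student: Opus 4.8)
The plan is to repeat the argument of Proposition~\ref{Internal_DG} essentially verbatim, the one new feature being the treatment of the singular part of $b_0$ near $\Ga$. First, by Theorem~\ref{Theorem_2} we have $u\in L_\infty(\Om)$. Fix $r\in\big(\tfrac65,2\big)$ and $\theta\in\big(\tfrac3r-\tfrac32,1\big)$; then \eqref{Assumptions_Morrey_space_b} and Proposition~\ref{Holder_inequality} give $b\in L^{r,3-r}(\Om)$ with $\|b\|_{L^{r,3-r}(\Om)}\le c\,\|b\|_{L^{2,1}_w(\Om)}$. Fix $k\ge 0$, radii $0<\rho<R$ and a cut-off $\zeta\in C_0^\infty(B_R(x_0))$ as in \eqref{Cut-off}, set $\tilde u:=(u-k)_+$ and $m:=\tfrac1{1-\theta}$, so that $2m-1=m(1+\theta)$. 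Using $f\in L_q(\Om)\subset L_2(\Om)$ and the extension of the class of admissible test functions noted after Definition~\ref{p-weak}, I would insert $\eta:=\zeta^{2m}\tilde u$ --- which lies in $\overset{\circ}{W}{^1_p}(\Om)\cap L_\infty(\Om)$ and is supported in $B_R(x_0)\subset B_{4R}(x_0)\subset\Om$ --- as a test function in the reduced identity \eqref{Identity1}.

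The key point is that the trace of $\eta$ on $\Ga$ vanishes: since $p>2$ the trace $u|_\Ga$ is well defined by \eqref{Trace}, the positive part commutes with it, and $k\ge 0$ together with \eqref{Zero_on_Gamma} give $\eta|_\Ga=\zeta^{2m}\,(u|_\Ga-k)_+=0$. Hence the singular term $2\pi\al\int_\Ga u\eta\,dl_x$ in \eqref{Identity1} drops out. This is exactly where the choice of starting level $k_0=0$ and the hypothesis \eqref{Zero_on_Gamma} are used, and it is the reason why the ``bad'' sign of $\div b_0$ does not obstruct membership in the De Giorgi class with starting level $0$.

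Once this term is discarded, \eqref{Identity1} becomes the identity appearing in the proof of Proposition~\ref{Internal_DG}, but with the \emph{divergence free} field $b$ in place of $b_0$. I would then integrate by parts in the drift term --- legitimate since $\div b=0$ in $\mathcal D'(\Om)$, $b\in L_{p'}(\Om)$ and $\zeta^{2m}\tilde u^2\in W^1_p(\Om)$ is compactly supported --- to rewrite $\int_\Om\zeta^{2m}\tilde u\,b\cdot\nabla u\,dx=-m\int_\Om\zeta^{2m-1}\tilde u^2\,b\cdot\nabla\zeta\,dx$, arriving at
$$
\int\limits_\Om\zeta^{2m}|\nabla\tilde u|^2\,dx \ = \ -2m\!\int\limits_\Om\zeta^{2m-1}\tilde u\,\nabla\tilde u\cdot\nabla\zeta\,dx \ + \ m\!\int\limits_\Om\zeta^{2m-1}\tilde u^2\,b\cdot\nabla\zeta\,dx \ + \ \int\limits_\Om f\cdot\nabla(\zeta^{2m}\tilde u)\,dx .
$$
The first term is absorbed into the left-hand side by Young's inequality, and the last is handled by Hölder's and Young's inequalities exactly as in Proposition~\ref{Internal_DG}, producing the term $\|f\|_{L_q(\Om)}^2\,|A_k\cap B_R(x_0)|^{1-2/q}$. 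For the middle term, since $2m-1=m(1+\theta)$ I would apply the estimate \eqref{Morrey_Compactness_estimate} with cut-off $\zeta^m$, then Young's inequality with a small parameter to absorb $\|\nabla(\zeta^m\tilde u)\|_{L_2}^2$, which yields a bound of the form $\frac{c}{(R-\rho)^2}\big(1+(\tfrac{R}{R-\rho})^{2\theta/(1-\theta)}\|b\|_{L^{r,3-r}(\Om)}^{2/(1-\theta)}\big)\,\|\tilde u\|_{L_2(B_R(x_0))}^2$. Since $\zeta\equiv1$ on $B_\rho(x_0)$ and $\nabla\tilde u=\chi_{A_k}\nabla u$, collecting these estimates gives \eqref{DG} on $B_{4R}(x_0)$ with $F=\|f\|_{L_q(\Om)}$, $\be=\tfrac{2\theta}{1-\theta}$ and $\ga>0$ depending only on $\|b\|_{L^{2,1}_w(\Om)}$ (the parameter $\al$ in fact drops out here). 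Finally, applying the same reasoning to $-u$, which solves the same equation with $f$ replaced by $-f$ and still satisfies $(-u)|_\Ga=0$, gives $-u\in DG(B_{4R}(x_0);0)$.

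The hard part will be purely bookkeeping: justifying that $\eta=\zeta^{2m}(u-k)_+$ is admissible in \eqref{Identity1} and that its $\Ga$-trace vanishes for $k\ge 0$; everything else is identical to the divergence free case already settled in Proposition~\ref{Internal_DG}, and no new use of the Morrey bound or of Proposition~\ref{Morrey_Compactness} beyond what is made there is needed. The one genuine subtlety is that the density argument extending the class of test functions (and, via \eqref{h_is harmonic}, the passage from \eqref{Identity} to \eqref{Identity1}) must be compatible with the presence of the $\Ga$-trace; this is why one works throughout with $\eta$ of the explicit form above, whose $W^1_p$-regularity (coming from $u\in W^1_p(\Om)\cap L_\infty(\Om)$ and $p>2$) makes its restriction to $\Ga$ meaningful by \eqref{Trace}.
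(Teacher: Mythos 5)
Your strategy coincides with the paper's: fix $k\ge 0$, observe that $\tilde u:=(u-k)_+$ has vanishing trace on $\Ga$ (since $u|_\Ga=0$ and $k\ge 0$), and use this to remove the singular contribution of the drift so that the Caccioppoli inequality from Proposition~\ref{Internal_DG} is recovered. The paper, however, implements the cancellation somewhat differently: it does not pass through the reduced identity \eqref{Identity1}, but instead invokes Proposition~\ref{Q_Form} to conclude $\mathcal B[\zeta^m\tilde u,\zeta^m\tilde u]=0$ (since $\zeta^m\tilde u\in\overset{\circ}{W}{^1_p}(\Om)$ vanishes on $\Ga$), and then expands this quadratic form algebraically to arrive directly at \eqref{Arrive_at_this}, which carries the full drift $b_0$. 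This distinction has one concrete consequence for your write-up: if one integrates the singular part of $b_0$ by parts starting from \eqref{Identity} (rather than taking \eqref{Identity1} at face value -- which, as written, omits the term $-\al\int_\Om u\,\nabla\eta\cdot\nabla h\,dx$ that also arises), one finds the Caccioppoli estimate carries $-m\int\zeta^{2m-1}\tilde u^2\,b_0\cdot\nabla\zeta\,dx$ and not merely $-m\int\zeta^{2m-1}\tilde u^2\,b\cdot\nabla\zeta\,dx$. Since $\tfrac{x'}{|x'|^2}$ lies in $L^{2,1}_w(\Om)$, this extra piece is controlled by the very same Chiarenza--Frasca estimate and your conclusion stands; but it does mean $\ga$ retains a dependence on $\al$ (consistent with the proposition as stated), contrary to your parenthetical remark that $\al$ drops out.
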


\begin{proof} We take arbitrary $k\ge 0$ and   proceed as in the proof of Proposition \ref{Internal_DG}. Define $\tilde u$ by \eqref{u_tilde}. As $u|_{\Ga} =0$ and $k\ge 0$ we conclude
$  \tilde u|_{\Ga} = 0$ in the sense of traces and from Proposition \ref{Q_Form} we conclude
\begin{equation}\label{Q_form_vanishes}
 \mathcal B[\zeta^m\tilde u, \zeta^m \tilde u ] \ = \ 0.
\end{equation}
Hence we again arrive at \eqref{Arrive_at_this} and proceed in the same way as in Proposition \ref{Internal_DG}.
\end{proof}

 Finally   we consider a   point $x_0$ laying on  the boundary $\cd \Om$. Note that as $\Om$ is a bounded Lipschitz domain there exist $R_*>0$ and $\dl_*$  such that for any $x_0\in \cd \Om$ and any $R< R_*$
 \begin{equation}\label{Boundary_density}
 |B_R(x_0)\setminus \Om| \, \ge \, \dl_* \, |B_R|.
 \end{equation}

\begin{proposition}
  \label{Boundary_DG}
  Let all assumptions of Theorem \ref{Theorem_3} hold and denote by $\bar u$ the zero extension of $u$ outside $\Om$. Assume $x_0\in \cd \Om$ and $4R\le R_*$. Then for any $k_0\ge 0 $ we have $\pm \bar u\in DG(B_{4R}(x_0); 0)$  where $DG(\Om;  0)$ is the De Giorgi class in Definition \ref{Def_DG} with $n=3$, $k_0=0$, $F = \| f\|_{L_q(\Om)}$ and some   $\ga>0$, $\be>0$ which depend only on $\| b\|_{L_w^{2,1}(\Om)}$ and  $\al$.
\end{proposition}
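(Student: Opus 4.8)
The plan is to repeat almost verbatim the argument behind Propositions~\ref{Internal_DG} and~\ref{Singular_Curve_DG}, the only genuinely new point being that the homogeneous Dirichlet condition $u\in\overset{\circ}{W}{^1_p}(\Om)$ legitimises the zero extension. First I would record the elementary properties of $\bar u$. By Theorem~\ref{Theorem_2} we have $u\in L_\infty(\Om)$, hence $\bar u\in L_\infty(\Om_0)$ for any $\Om_0$ with $\Om\Subset\Om_0$; since $u\in\overset{\circ}{W}{^1_p}(\Om)\subset\overset{\circ}{W}{^1_2}(\Om)$, the zero extension satisfies $\bar u\in\overset{\circ}{W}{^1_2}(\Om_0)$ and $\nabla\bar u=\chi_{\Om}\nabla u$. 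Consequently, for every $k\ge 0$ the set $A_k=\{x\in\Om_0:\bar u(x)>k\}$ is contained in $\Om$, the truncation $(\bar u-k)_+=(u-k)_+$ belongs to $\overset{\circ}{W}{^1_2}(\Om)\cap L_\infty(\Om)$ (the standard truncation property, using $k\ge 0$), and all the integrals entering the De~Giorgi inequality~\eqref{DG} for $\bar u$ reduce to integrals over $\Om_R(x_0)\subset\Om$. In particular it is enough to verify~\eqref{DG} for sub-balls $B_\rho(x_0')\subset B_{4R}(x_0)$ that meet $\Om$ --- for balls disjoint from $\Om$ the inequality is trivial since there $\bar u\equiv 0$.

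Next, fix such a ball $B_R(x_0')$, fix $k\ge 0$, and choose $r\in(\tfrac{6}{5},2)$, $\theta\in(\tfrac{3}{r}-\tfrac{3}{2},1)$, $m=\tfrac{1}{1-\theta}$ together with a cut-off $\zeta\in C_0^\infty(B_R(x_0'))$ satisfying~\eqref{Cut-off}, exactly as in the proof of Proposition~\ref{Internal_DG}. Put $\tilde u:=(u-k)_+$ as in~\eqref{u_tilde} and take $\eta:=\zeta^{2m}\tilde u$. Since $\tilde u\in\overset{\circ}{W}{^1_2}(\Om)\cap L_\infty(\Om)$, the product $\eta$ lies in the same class, so --- because $f\in L_q(\Om)\subset L_2(\Om)$ --- it is an admissible test function in~\eqref{Identity} after the usual density approximation. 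The decisive observation is that $\tilde u|_\Ga=0$ in the sense of traces: this holds because $u|_\Ga=0$ and $k\ge 0$, and it remains valid even when $x_0\in\cd\Om\cap\Ga$. Hence $\zeta^m\tilde u\in\overset{\circ}{W}{^1_p}(\Om)$ has vanishing trace on $\Ga$, and Proposition~\ref{Q_Form} yields $\mathcal B[\zeta^m\tilde u,\zeta^m\tilde u]=\pi\al\int_\Ga|\zeta^m\tilde u|^2\,dl_x=0$, precisely as in the proof of Proposition~\ref{Singular_Curve_DG}. Splitting $\mathcal B[u,\eta]$ as in that proof then produces the identity~\eqref{Arrive_at_this}.

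From this point I would simply transcribe the rest of the proof of Proposition~\ref{Internal_DG}: inserting $\eta$ into~\eqref{Identity}, using~\eqref{Arrive_at_this} and~\eqref{Cut-off}, estimating the drift contribution by the Morrey inequality~\eqref{Morrey_Compactness_estimate} of Proposition~\ref{Morrey_Compactness} --- which is available for the arbitrary cut-off $\zeta\in C_0^\infty(\Bbb R^3)$ precisely because $\tilde u\in\overset{\circ}{W}{^1_2}(\Om)$ --- and absorbing the top-order term via Young's inequality, one arrives at~\eqref{Gives_this}, that is, at~\eqref{DG} for $\bar u$ with $F=\|f\|_{L_q(\Om)}$ and with $\ga,\be>0$ depending only on $\al$ and $\|b\|_{L^{2,1}_w(\Om)}$ (through $r$, $\theta$ and the bound $\|b_0\|_{L^{r,3-r}(\Om)}\le c\,\|b\|_{L^{2,1}_w(\Om)}$ coming from Proposition~\ref{Holder_inequality}). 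Applying the same reasoning to $-u$ in place of $u$ gives $\pm\bar u\in DG(B_{4R}(x_0);0)$.

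I do not expect a real obstacle here: everything is bookkeeping around the fact that near $\cd\Om$ the cut-off $\zeta$ cannot be taken compactly supported in $\Om$, so each step must be routed through the membership $\tilde u\in\overset{\circ}{W}{^1_2}(\Om)$ rather than through $\supp\zeta\subset\Om$ --- which is exactly what the last sentence of Proposition~\ref{Morrey_Compactness} and the extension of the admissible test-function class recorded after Definition~\ref{p-weak} are for. I would also emphasise that this proposition asserts only membership in the De~Giorgi class; the density bound~\eqref{Boundary_density}, which is what lets Lemma~\ref{osc_lemma_boundary} apply at boundary points, is a separate consequence of the Lipschitz regularity of $\cd\Om$ and plays no role in the present proof.
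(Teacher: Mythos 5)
Your proposal is correct and follows essentially the same route as the paper: extend $u$ by zero, observe that for $k\ge 0$ the truncation $(\bar u-k)_+$ is the zero extension of $(u-k)_+\in\overset{\circ}{W}{^1_p}(\Om)\cap L_\infty(\Om)$, so $\eta=\zeta^{2m}(u-k)_+$ is admissible in \eqref{Identity} even though $\zeta$ is not compactly supported in $\Om$, use $(u-k)_+|_\Ga=0$ to obtain \eqref{Q_form_vanishes}, and then run the Morrey/Young argument of Proposition~\ref{Internal_DG}. The only cosmetic slip is the bound $\|b_0\|_{L^{r,3-r}(\Om)}\le c\,\|b\|_{L^{2,1}_w(\Om)}$, which should pick up an additional $|\al|$-dependent contribution from the singular part of $b_0$; since the asserted constants are allowed to depend on $\al$, this does not affect the conclusion.
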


\begin{proof}

Denote $  \Om_R(x_0):=\Om \cap B_R(x_0)$.  Take a cut-off function $\zeta\in C_0^\infty(B_R(x_0))$ satisfying \eqref{Cut-off}. Then for any $k\ge 0$   the function  $\eta = \zeta^{2m} (  u-k)_+$  vanishes on $\cd \Om$ hence it is admissible for the identify \eqref{Identity}. From $ (  u-k)_+|_{\Ga}=0$ we conclude \eqref{Q_form_vanishes} holds. Proceeding as in the proofs of Propositions \ref{Internal_DG}, \ref{Singular_Curve_DG} we arrive at
  $$
  \gathered
 \tfrac 12\,  \| \nabla (u-k)_+\|_{L_2(\Om_\rho(x_0))}^2 \ \le \\ \le   \ \frac{c}{(R-\rho)^2}\, \left(1+ \Big( \frac R{R-\rho}\Big)^{\frac {2\theta} {1-\theta}}\,   \|b\|_{L^{r, 3-r}(\Om)}^{\frac { 2} {1-\theta}}\right)\,  \| (u-k)_+  \|_{L_2(\Om_R(x_0))}^2    + \\ + \
  \| f\|_{L_q(\Om)}^2 \, |A_k\cap \Om_R (x_0)|^{1-\frac 2q},
  \endgathered
  $$
  which  gives \eqref{Gives_this} with the function $\bar u$ instead of $u$. Hence we obtain $\bar u \in DG(B_{4R}(x_0))$. Similarly we obtain $-\bar u \in DG(B_{4R}(x_0))$.
\end{proof}

Now we can prove Theorem \ref{Theorem_3}. Taking into account \eqref{Boundary_density} we can iterate estimates in Proportions  \ref{osc_lemma}, \ref{osc_lemma_curve}, \ref{osc_lemma_boundary} and obtain the oscillation estimate
\begin{equation}\label{Decay_estimate}
 \forall\, \rho< R \qquad
 \operatorname{osc}\limits_{B_\rho(x_0)\cap \Om} u
\, \le \, c_2\, \left( \Big(\frac \rho {R}\Big)^\mu  \, \| u\|_{L_\infty(\Om)}  + F\, \rho^\mu\right)
\end{equation}
 with some $\mu\in (0,1)$ depending only on $\si\in (0,1)$ and $q>3$ in one of the following three cases:
 \begin{itemize}
   \item[(a)] $B_{4R}(x_0)\subset \Om\setminus \Ga$,
   \item[(b)] $x_0\in\Ga$, $B_{4R}(x_0)\subset \Om$,
   \item[(c)] $x_0\in \cd \Om$, $R<\frac 14 R_*$.
 \end{itemize}
 Then the inequality \eqref{Decay_estimate} for an arbitrary $x_0\in \bar \Om$ and $R<\frac 14 R_*$ can be obtained by a standard combination of inequalities (a), (b), (c). From
 this inequality and  \eqref{L_infty_Estimate} the estimate \eqref{Holder_Estimate} follows immediately. Theorem \ref{Theorem_3} is proved.

\newpage
\section{Existence and uniqueness of $p$-weak solutions} \label{Proof_T1}
\setcounter{equation}{0}

\bigskip In this section we prove Theorem \ref{Theorem_1}.
First   we establish the higher integrability of weak solutions to the problem \eqref{Equation}.

\medskip
\begin{proposition}\label{L_p_estimate}   Assume   $b\in L_{w}^{2,1}(\Omega)$  satisfies \eqref{Drift_Specific}, \eqref{Assumption_b} and assume $f\in L_q(\Om)$ with $q>3$. Then there exists $p>2$ depending only on $q$,  $\al$, $\|b\|_{L_{w}^{2,1}(\Omega)}$ and the Lipschitz constant of $\cd \Om$ such that for any   $p$-weak solution $u$  of the problem \eqref{NSE} satisfying additional assumption  \eqref{Zero_on_Gamma} the   estimate \eqref{Main_Estimate}
 holds with some constant $c>0$ depending only on $q$, $\al$ and $\|b\|_{L_{w}^{2,1}(\Omega)}$ and the Lipschitz constant of $\cd \Om$.
\end{proposition}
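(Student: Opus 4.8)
The plan is to upgrade the qualitative information already at our disposal into a quantitative gradient bound in $L_p$ for some $p>2$. Fix a $p$-weak solution $u$ of \eqref{Equation} with $u|_\Ga=0$; since $p>2$ and $b\in L^{2,1}_w(\Om)\subset L_{p'}(\Om)$, Theorems~\ref{Theorem_2} and~\ref{Theorem_3} apply and give $u\in C^\mu(\bar\Om)\cap L_\infty(\Om)$ with some $\mu\in(0,1)$ and
\[
\|u\|_{C^\mu(\bar\Om)}+\|u\|_{L_\infty(\Om)}\ \le\ c\,\|f\|_{L_q(\Om)} .
\]
Moreover, testing \eqref{Identity} with $\eta=u$ (admissible because $u\in\WWW{^1_2}(\Om)\cap L_\infty(\Om)$ and $f\in L_2(\Om)$) and using Proposition~\ref{Q_Form} together with $u|_\Ga=0$, which forces $\mathcal B[u,u]=\pi\al\int_\Ga|u|^2\,dl_x=0$, yields the basic energy bound $\|\nabla u\|_{L_2(\Om)}\le\|f\|_{L_2(\Om)}\le c\,\|f\|_{L_q(\Om)}$ (this is the identity responsible for uniqueness as well). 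So it remains to prove $\nabla u\in L_p(\Om)$ with $\|\nabla u\|_{L_p(\Om)}\le c\|f\|_{L_q(\Om)}$ for a suitable $p>2$.

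First I would extract a Morrey bound for $\nabla u$ near the singular curve. For $x_0\in\Ga$ with $B_{4R}(x_0)\subset\Om$, Proposition~\ref{Singular_Curve_DG} gives $\pm u\in DG(B_{4R}(x_0);0)$, so writing \eqref{DG} with $k=0$ for $u$ and for $-u$ and adding produces the Caccioppoli inequality $\int_{B_{R/2}(x_0)}|\nabla u|^2\le cR^{-2}\int_{B_R(x_0)}|u|^2+c\|f\|_{L_q(\Om)}^2R^{\,n-2n/q}$; since $u(x_0)=0$, H\"older continuity bounds the first term by $c\|u\|_{C^\mu(\bar\Om)}^2R^{\,n-2+2\mu}$, whence
\[
\int_{B_{R/2}(x_0)}|\nabla u|^2\,dx\ \le\ c\,\|f\|_{L_q(\Om)}^2\,R^{\,n-2+2\nu},\qquad \nu:=\min\Big\{\mu,\ 1-\tfrac nq\Big\}>0 .
\]
Away from $\Ga$ the drift is divergence free: at interior points $x_0$ with $B_{4R}(x_0)\subset\Om\setminus\Ga$ one applies Proposition~\ref{Internal_DG} at the De Giorgi level $k=\pm(u)_{B_{2R}(x_0)}$ together with the Sobolev--Poincar\'e inequality and obtains a reverse H\"older inequality with exponents $2$ and $\tfrac{2n}{n+2}$, the drift term in \eqref{DG} being absorbed through \eqref{Morrey_Compactness_estimate}; near $\cd\Om$ the same argument works for the zero extension $\bar u$ via Proposition~\ref{Boundary_DG} and the density property \eqref{Boundary_density}, which makes $\bar u_\pm$ vanish on a fixed portion of $B_R(x_0)$. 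By Gehring's lemma these reverse H\"older inequalities give, with scale invariant constants, a local higher integrability of $\nabla u$ (resp.\ $\nabla\bar u$) with some exponent $p_0>2$ depending only on $n$ and the structural constants; we may also take $p_0<q$.

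The remaining, and delicate, step is to patch these local estimates across $\Ga$, where \emph{no} reverse H\"older inequality is available (a curve carries vanishing $W^1_2$-capacity in $\Bbb R^3$, so vanishing on $\Ga$ gives no Poincar\'e gain in the natural energy class). I would take a Whitney-type cover of $\bar\Om$ adapted to $\Ga$: finitely many balls on which the reverse H\"older estimate of the previous paragraph holds — covering the part of $\bar\Om$ at a fixed positive distance from $\Ga$ together with a neighbourhood of $\cd\Om$ — and a countable family $\{B_i=B_{\de_i/8}(x_i)\}$ of bounded overlap with $\de_i=\dist(x_i,\Ga)$, covering the rest of a tube $\{0<\dist(x,\Ga)<\ep_0\}$, arranged so that the dilates used by Gehring still avoid $\Ga\cup\cd\Om$. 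On each $B_i$, Gehring combined with the Morrey bound above (used on $B_{2\de_i}(x_0)$ for the nearest $x_0\in\Ga$) gives $\int_{B_i}|\nabla u|^p\le c\|f\|_{L_q(\Om)}^p\,\de_i^{\,n-p+\nu p}+c\int_{2B_i}|f|^p$. Since $\Ga$ is a bounded curve, the balls with $\de_i\sim 2^{-j}$ number $O(2^{j})$, so $\sum_i\de_i^{\,n-p+\nu p}\le c\sum_j 2^{\,j(p(1-\nu)-2)}$, which converges exactly when $p(1-\nu)<2$; as $\nu>0$ this range contains values $>2$, so one fixes $p\in\big(2,\ \min\{q,p_0,\tfrac{2}{1-\nu}\}\big)$. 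Summing up, together with $\sum_i\int_{2B_i}|f|^p\le c\int_\Om|f|^p\le c\|f\|_{L_q(\Om)}^p$ from bounded overlap and the finitely many ``good'' balls — on which Gehring is fed by $\|\nabla u\|_{L_2(\Om)}\le c\|f\|_{L_q(\Om)}$ — this gives $\nabla u\in L_p(\Om)$ with $\|\nabla u\|_{L_p(\Om)}\le c\|f\|_{L_q(\Om)}$, and with $\|u\|_{L_p(\Om)}\le c\|u\|_{L_\infty(\Om)}\le c\|f\|_{L_q(\Om)}$ this is exactly \eqref{Main_Estimate}. The resulting $p>2$ and the constant $c$ then depend only on $q$, $\al$, $\|b\|_{L^{2,1}_w(\Om)}$ and the Lipschitz constant of $\cd\Om$, through $\mu$, $\nu$, $R_*$, $\dl_*$, the Gehring exponents and $|\Om|$.

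I expect the patching across $\Ga$ to be the main obstacle: the loss of the reverse H\"older inequality on $\Ga$ has to be compensated by H\"older continuity through the Morrey bound, and the Whitney summation closes only because $\Ga$ has codimension two in $\Bbb R^3$, so the admissible $p$ degenerates to $2$ as $\mu\to0$ or $q\to n$. Everything else — absorbing the drift in the Caccioppoli inequalities via Proposition~\ref{Morrey_Compactness}, and bookkeeping the constants through Gehring's lemma and the covering — is routine.
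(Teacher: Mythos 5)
Your proposal is correct in outline but takes a genuinely different, and substantially longer, route than the paper. The paper's proof is a clean reduction: since $\div\!\left(\al\frac{x'}{|x'|^2}\,u\right)=\al\frac{x'}{|x'|^2}\cdot\nabla u+2\pi\al\,\dl_\Ga u$ and $u|_\Ga=0$, the singular potential part of the drift can be moved to the right-hand side as a divergence, so $u$ solves $-\Delta u+b\cdot\nabla u=-\div g$ with \emph{divergence-free} $b$ and $g=f+\al\frac{x'}{|x'|^2}\,u$. H\"older continuity plus $u|_\Ga=0$ give $|u(x)|/|x'|\le c\|u\|_{C^\mu}|x'|^{\mu-1}$, and since $\Ga$ has codimension $2$ in $\Bbb R^3$ this lies in $L_{q_0}$ for any $q_0<\frac{2}{1-\mu}$; hence $\|g\|_{L_{q_0}}\le c\|f\|_{L_q}$ with $q_0=\min\{q,p_0\}>2$. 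One then simply cites the higher-integrability result for divergence-free Morrey drifts from \cite{Chernobai_Shilkin_1} to get $\|u\|_{W^1_p}\le c\|g\|_{L_p}$ for some $p\in(2,q_0)$, and \eqref{Main_Estimate} follows. Your proof instead runs the Morrey/reverse-H\"older/Gehring/Whitney machinery directly, using the Morrey gradient bound near $\Ga$ (from H\"older continuity and the De~Giorgi--Caccioppoli inequality) to feed the Gehring estimate on Whitney balls and then summing dyadically; the summability condition $p(1-\nu)<2$ you arrive at is precisely the paper's $p_0<\frac{2}{1-\mu}$, both encoding that $\Ga$ has codimension $2$. What the paper's reduction buys is that the entire Gehring–Whitney apparatus across $\Ga$ and near $\cd\Om$ is outsourced to the divergence-free result, where it already lives; what your argument buys is a more self-contained, intrinsic proof. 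Two caveats you would need to patch: the paper's De~Giorgi class \eqref{DG} carries the Morrey-type inhomogeneous term $F^2|A_k\cap B_R|^{1-2/q}$ rather than $\int_{A_k\cap B_R}|f|^2$, so to feed classical Gehring you should re-derive the Caccioppoli inequality at the mean level $k=(u)_{B_R}$ keeping $\int|f|^2$ on the right (straightforward); and you do not address the mixed regime near $\cd\Om\cap\Ga$, where the Whitney scale has to be $\min\{\dist(x,\Ga),\dist(x,\cd\Om)\}$ and the Morrey bound must be combined with the boundary density \eqref{Boundary_density}. Neither point is a fatal gap, but both require care. Note also that the energy bound $\|\nabla u\|_{L_2}\le\|f\|_{L_2}$ from $\mathcal B[u,u]=0$, while correct, is not needed in the paper's proof.
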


\begin{proof}
  Assume $p>2$ and let $u$ be a $p$-weak solution to \eqref{NSE}. Then we can interpret $u$ as a $p$-weak solution to the problem
  \begin{equation}\label{Equation_div-free}
    \left\{ \quad \gathered -\Delta u+b\cdot\nabla u \, = \, \div g \quad \mbox{in} \quad \Om, \\ u|_{\cd \Om} \, = \, 0 \qquad
    \endgathered \right.
  \end{equation}
  with the right hand side
  $$
  g \,  = \, f \, + \, \al \,  \frac{x'}{|x'|^2} \, u.
  $$
  and hence for any   $p >2$ we have
  $$
  \| g\|_{L_p(\Om)} \, \le \, \|f\|_{L_p(\Om)} \, + \, c\, \left\| \tfrac{u}{|x'|}\right\|_{L_p(\Om)}
  $$
  From Theorem \ref{Theorem_3} we obtain $u\in C^\mu(\bar \Om)$ with some $\mu \in (0,1)$ depending only on
   $\Om$, $\al$,  $q$   and $\|b\|_{L^{2, 1}_{w}(\Om)} $. Fix some $p_0\in \left( 2, \frac{2}{1-\mu}\right)$ and denote $q_0= \min\{ q, p_0\}$.
  Taking into account \eqref{Zero_on_Gamma}  we obtain
  $$
  \left\| \tfrac{u}{|x'|}\right\|_{L_{q_0}(\Om)} \, \le \, c\, \| u\|_{C^\mu (\bar \Om)}
  $$
  and hence from Theorem \ref{Theorem_3}  we arrive at
  $$
  \| g\|_{L_{q_0}(\Om)} \, \le \, \|f\|_{L_q(\Om)} \, + \, c\, \| u\|_{C^\mu(\bar \Om)} \, \le \, c\, \|f\|_{L_q(\Om)}
  $$
   Since $\div b=0$,  we conclude $u$ is a $p$-weak solution of  the problem \eqref{Equation_div-free} with a divergence-free drift $b$ and the right-hand side $q\in L_{q_0}(\Om)$ with some $q_0>2$. Hence from   \cite{Chernobai_Shilkin_1}  we obtain there exists $p\in (2, q_0)$ such that
  $$
  \|u\|_{W^1_p(\Omega)} \, \le \,  c\,  \|g\|_{L_{p}(\Omega)}
  $$
  from which we obtain \eqref{Main_Estimate}.
\end{proof}

\medskip
Now we turn to the proof of Theorem \ref{Theorem_1}.
We follow the method   we used in  \cite{Chernobai_Shilkin} in the 2D case. As a first step  we construct a solution of an auxiliary problem satisfying the ``non-spectral'' condition \eqref{Assumptions_b_good_sign}.

\begin{proposition}\label{prop7.1}
Assume $\al<0$. Then  there exists  $p_1>2$ depending only on $\Om$ and $\al$ such that for any  $f\in C_0^\infty(\Om\setminus \Ga)$  there exists  a unique $p_1$--weak solution  $v$ to the problem
\begin{equation}\label{support_system}
\left\{ \quad \gathered -\Delta v     \, - \,  |\al|\,   \frac{x'}{|x'|^2} \cdot \nabla v  \ = \  -|x'|^{\al}\div f  \quad \mbox{in} \quad \Om, \\ v|_{\cd \Om} \ = \ 0. \qquad \qquad \endgathered \right.
\end{equation}
Moreover, $v$ is H\" older continuous in $\bar \Om$.
\end{proposition}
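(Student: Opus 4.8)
The plan is to recognise \eqref{support_system} as a Dirichlet problem of the form \eqref{Equation} whose singular drift has the \emph{good} sign \eqref{Assumptions_b_good_sign}, and then to invoke the existence--uniqueness--regularity theory for the good-sign case developed in \cite{Chernobai_Shilkin_1}, which applies here essentially verbatim. The one feature of \eqref{support_system} not covered by that theory is the weight $|x'|^{\al}$ in front of $\div f$; since $f$ is supported away from the axis $\Ga$, this weight is harmless and can be absorbed into the right-hand side. I would proceed in three steps.

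\emph{Step 1 (the drift).} Since $\al<0$ we have $-|\al|=\al$, so the drift in \eqref{support_system} is $b^{\ast}:=-|\al|\,\dfrac{x'}{|x'|^{2}}$. From $\div\dfrac{x'}{|x'|^{2}}=2\pi\,\dl_\Ga$ (equivalently \eqref{h_is harmonic}, as $\dfrac{x'}{|x'|^{2}}=-\nabla h$) one gets $\div b^{\ast}=-2\pi|\al|\,\dl_\Ga\le 0$ in $\mathcal D'(\Om)$, i.e.\ $b^{\ast}$ satisfies \eqref{Assumptions_b_good_sign}; moreover $\div b^{\ast}=0$ in $\Om\setminus\Ga$ and $b^{\ast}\in L^{2,1}_{w}(\Om)$ with $\|b^{\ast}\|_{L^{2,1}_{w}(\Om)}\le c\,|\al|$ ($\dfrac{x'}{|x'|^{2}}$ being the model element of this critical space). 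Hence $b^{\ast}$ meets the hypotheses of the good-sign theory, with all constants controlled by $|\al|$ and the Lipschitz constant of $\cd\Om$ only.

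\emph{Step 2 (the weighted right-hand side).} Set $K:=\supp f\Subset\Om\setminus\Ga$, so $\dist(K,\Ga)>0$ and $|x'|^{\al}$ is smooth and bounded above and below by positive constants near $K$. Then
\[
-\,|x'|^{\al}\div f\;=\;-\div\!\big(|x'|^{\al}f\big)\;+\;\al\,|x'|^{\al-2}\,(x'\cdot f),
\]
and both $|x'|^{\al}f$ and the scalar function $\al|x'|^{\al-2}(x'\cdot f)$ lie in $C_{0}^{\infty}(\Om\setminus\Ga)\subset C_{0}^{\infty}(\Om)$. Representing the zeroth-order term as $\div g_{2}$ with $g_{2}:=-\nabla\psi$, $\psi$ the Newtonian potential of $\al|x'|^{\al-2}(x'\cdot f)$ (so $g_{2}\in C^{\infty}(\bar\Om)$), one gets $-|x'|^{\al}\div f=-\div\tilde g$ with $\tilde g:=|x'|^{\al}f-g_{2}\in L_{q}(\Om)$ for every $q\in[1,\infty)$, in particular for some $q>3$. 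Thus a $p$-weak solution of \eqref{support_system} is precisely a $p$-weak solution, in the sense of Definition \ref{p-weak}, of the equation $-\Delta v+b^{\ast}\cdot\nabla v=-\div\tilde g$ in $\Om$ with $v|_{\cd\Om}=0$.

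\emph{Step 3 (conclusion).} Now I would apply the good-sign theory of \cite{Chernobai_Shilkin_1} to the problem of Step 2. It produces some $p_{1}>2$, depending only on $\Om$, $\al$ (through $\|b^{\ast}\|_{L^{2,1}_{w}(\Om)}\le c|\al|$) and the Lipschitz constant of $\cd\Om$, such that a $p_{1}$-weak solution $v$ exists and belongs to $C^{\mu}(\bar\Om)$: existence by the Galerkin method, using the coercivity furnished by the good sign of $\mathcal B$ together with the Fefferman--Chiarenza--Frasca estimates (Propositions \ref{Morrey_usual}, \ref{Morrey_Compactness}); the gain $p_{1}>2$ by a Gehring-type reverse-H\"older argument; and H\"older continuity up to $\cd\Om$ by the De Giorgi machinery of Section \ref{DG_section} applied with $b^{\ast}$. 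A pleasant feature of the good sign is that in the De Giorgi inequality the axis term $\mathcal B[\zeta^{m}(v-k)_{+},\zeta^{m}(v-k)_{+}]\ge 0$ may simply be dropped, so no condition on $\Ga$ is needed and the oscillation estimates (Lemmas \ref{osc_lemma}, \ref{osc_lemma_boundary}) close at every point, including those of $\Ga$. Uniqueness is the energy identity: for $w:=v_{1}-v_{2}\in\WWW{^1_{p_{1}}}(\Om)$ (which has a trace on $\Ga$ because $p_{1}>2$), testing the difference equation with $w$ gives $\|\nabla w\|_{L_{2}(\Om)}^{2}+\mathcal B[w,w]=0$, while $\mathcal B[w,w]=\pi|\al|\int_{\Ga}|w|^{2}\,dl_{x}\ge 0$ by Proposition \ref{Q_Form}, so $\nabla w\equiv 0$ and $w\equiv 0$. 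The only point needing real attention is the bookkeeping of Step 2 --- rewriting the weighted right-hand side as the divergence of an $L_{q}$ field and checking the critical-space membership of $b^{\ast}$; the rest is a direct appeal to \cite{Chernobai_Shilkin_1}. (One could instead bypass that citation via the substitution $v=|x'|^{\al/2}w$, which turns the operator in \eqref{support_system} into the positive Schr\"odinger operator $|x'|^{\al/2}\big(-\Delta+\tfrac{\al^{2}}{4}|x'|^{-2}\big)w$, but the argument above is shorter.)
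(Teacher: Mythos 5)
Your proposal is correct and consistent with the paper's treatment: in the paper, Proposition \ref{prop7.1} is proved simply by citing \cite{Chernobai_Shilkin_1}, and your three steps reconstruct why the good-sign theory developed there applies — namely, $\div\bigl(-|\al|\,x'/|x'|^2\bigr) = -2\pi|\al|\,\dl_\Ga \le 0$ so that $\mathcal B[w,w] = \pi|\al|\int_\Ga |w|^2\,dl_x \ge 0$, and the weight $|x'|^\al$ is absorbed into a divergence-form right-hand side $\tilde g\in L_q(\Om)$, which is harmless because $f$ is supported away from $\Ga$. No gaps; your deferral to \cite{Chernobai_Shilkin_1} for existence, the gain $p_1>2$, and H\"older continuity, together with the energy-identity uniqueness, is exactly the intended argument.
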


Proposition \ref{prop7.1} is proved in  \cite{Chernobai_Shilkin_1}.

\medskip
Now we apply the so-called ``Darboux transform'' to the function $v$ to construct a solution of an auxiliary  problem with $\al<0$ (which corresponds to the ``spectral'' case  \eqref{Assumptions_b_bad_sign}) and vanishing on $\Ga$.

\begin{proposition}\label{prop7.2}
Assume $\al<0$ and $q>3$. Then  there exists  $p>2$ depending only on  $q$, $\al$ and the Lipschitz constant of $\cd \Om$  such that for any  $f\in C_0^\infty(\Om\setminus \Ga)$  there exists  a unique $p$--weak solution  $u$ to the problem
\begin{equation}
\left\{ \quad \gathered     -\Delta u -  \al\, \frac{x'}{|x'|^2} \cdot \nabla u \ = \ -\div f \qquad\mbox{in}\quad \Om, \\
u|_{\cd \Om} \ = \ 0, \qquad \qquad
\endgathered\right.
\label{Equation_h}
\end{equation}
which satisfies the condition  \eqref{Zero_on_Gamma}.
Moreover, $u$ is H\" older continuous in $\bar \Om$ and satisfies   estimates  \eqref{Holder_Estimate} and \eqref{Main_Estimate}.

\end{proposition}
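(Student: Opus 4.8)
The plan is to realise the ``Darboux transform'' announced just before the statement. Given $f\in C_0^\infty(\Om\setminus\Ga)$, I would first take the $p_1$--weak solution $v$ of the ``good sign'' problem \eqref{support_system} supplied by Proposition \ref{prop7.1}: it lies in $\overset{\circ}{W}{^1_{p_1}}(\Om)$ for some $p_1>2$, is H\"older continuous on $\bar\Om$, and --- $f$ being supported away from $\Ga$ and the coefficients of \eqref{support_system} being smooth there --- is $C^\infty$ in $\Om\setminus\Ga$ by interior elliptic regularity. I then set $u:=|x'|^{-\al}v=|x'|^{|\al|}v$. Because $\al<0$ the weight $|x'|^{|\al|}$ is bounded and H\"older continuous on $\bar\Om$ and vanishes on $\Ga$; since $v$ is bounded and H\"older continuous, $u\in C^{\mu'}(\bar\Om)$ for some $\mu'\in(0,1)$, $u|_{\cd\Om}=0$, and --- this is the crucial point --- $u|_\Ga=0$. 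Away from $\Ga$ the substitution $u=|x'|^{|\al|}v$ is a classical multiplicative change of unknown; using $\Dl\ln|x'|=0$ in $\Om\setminus\Ga$ a direct computation shows that the exponent $|\al|$ is exactly the one for which the zeroth--order term created by the substitution cancels, the drift $-|\al|\tfrac{x'}{|x'|^2}$ of \eqref{support_system} is turned into $-\al\tfrac{x'}{|x'|^2}$, and the prefactor $|x'|^{|\al|}$ converts the right--hand side $-|x'|^{\al}\div f$ of \eqref{support_system} into $-\div f$ (here one uses $|\al|+\al=0$). Hence $u$ solves \eqref{Equation_h} classically in $\Om\setminus\Ga$.

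The heart of the argument --- and the step I expect to be the main obstacle --- is to upgrade this to a genuine $p_0$--weak solution of \eqref{Equation_h} on all of $\Om$ for some $p_0>2$, i.e. to show that the curve $\Ga$ is ``transparent'' for the transformed equation. From $|\nabla u|\le c\bigl(|x'|^{|\al|-1}+|x'|^{|\al|}|\nabla v|\bigr)$ and $|\al|>0$ one checks that $\nabla u\in L_{p_0}(\Om)$ for every $p_0>2$ close enough to $2$ (so that $p_0\le p_1$ and $(|\al|-1)p_0+2>0$), and that $|b_0\cdot\nabla u|\le c\,|x'|^{-1}|\nabla u|\le c\,|x'|^{|\al|-2}$ near $\Ga$ is integrable (again because $|\al|>0$). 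To pass from $\Om\setminus\Ga$ to $\Om$ I would take cut--offs $\chi_\ep\in C_0^\infty(\RR^3)$ with $0\le\chi_\ep\le1$, $\chi_\ep\equiv0$ on $\{|x'|<\ep\}$, $\chi_\ep\equiv1$ on $\{|x'|>2\ep\}$, $|\nabla\chi_\ep|\le c/\ep$, test the classical equation in $\Om\setminus\Ga$ against $\eta\chi_\ep$ with $\eta\in C_0^\infty(\Om)$, and let $\ep\to0$: the terms free of $\nabla\chi_\ep$ converge by dominated convergence (their integrands lie in $L_1(\Om)$), the $f$--terms are unaffected since $\supp f$ is away from $\Ga$, and the terms containing $\nabla\chi_\ep$ are bounded by $\tfrac c\ep\int_{\{\ep<|x'|<2\ep\}\cap\Om}\bigl(|\nabla u|+|u|\bigr)\,dx$, which tends to $0$ because $|\{\ep<|x'|<2\ep\}\cap\Om|\le c\,\ep^2$ and $|\al|>0$ (the $|\nabla v|$ contribution being handled by H\"older's inequality, using $\nabla v\in L_{p_1}$, $p_1>2$). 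This yields the identity \eqref{Identity} with $b_0=-\al\tfrac{x'}{|x'|^2}$ for all $\eta\in C_0^\infty(\Om)$. The same cut--off computation applied to $\int_\Om u\,\div\psi\,dx$, $\psi\in C_0^\infty(\Om;\RR^3)$, identifies the distributional gradient of $u$ on $\Om$ with its a.e.\ gradient, so $u\in W^1_{p_0}(\Om)$; combined with $u\in C(\bar\Om)$, $u|_{\cd\Om}=0$ and $u\chi_\ep\to u$ in $W^1_{p_0}(\Om)$ (again using $(|\al|-1)p_0+2>0$) we obtain $u\in\overset{\circ}{W}{^1_{p_0}}(\Om)$. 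Thus $u$ is a $p_0$--weak solution of \eqref{Equation_h} satisfying \eqref{Zero_on_Gamma}.

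The remaining assertions follow from results already in hand. Since $b\equiv0$ trivially satisfies \eqref{Assumption_b}, \eqref{Assumptions_Morrey_space_b}, Theorem \ref{Theorem_3} applies to $u$ and yields $u\in C^\mu(\bar\Om)$ with \eqref{Holder_Estimate}, and Proposition \ref{L_p_estimate} then gives $u\in\overset{\circ}{W}{^1_p}(\Om)$ with \eqref{Main_Estimate} for the exponent $p>2$ of that proposition --- in particular $u$ is a $p$--weak solution for that $p$, whose value depends only on $q$, $\al$ and the Lipschitz constant of $\cd\Om$, as required. Uniqueness in the class of $p$--weak solutions satisfying \eqref{Zero_on_Gamma} follows from the energy identity: if $u_1,u_2$ are two such solutions then $w:=u_1-u_2$ is a $p$--weak solution with $f=0$ and $w|_\Ga=0$, hence $w\in L_\infty(\Om)$ by Theorem \ref{Theorem_2} and is admissible as a test function in \eqref{Identity}; taking $\eta=w$ and using Proposition \ref{Q_Form} together with $w|_\Ga=0$ gives $\mathcal B[w,w]=0$, hence $\int_\Om|\nabla w|^2\,dx=0$ and $w\equiv0$.

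In short, everything reduces to previously proved statements except the ``transparency'' of $\Ga$ in the second paragraph, and that is precisely where the hypothesis $\al<0$ is used: it makes the weight $|x'|^{|\al|}$ vanish --- rather than blow up --- on $\Ga$, which at once forces $u|_\Ga=0$ and supplies the integrability of $\nabla u$ near $\Ga$ needed to justify the cut--off passage and to place $u$ in $\overset{\circ}{W}{^1_{p_0}}(\Om)$.
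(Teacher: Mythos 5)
Your proof is correct and follows the same core strategy as the paper: pass to the Darboux transform $u = |x'|^{|\al|}v$ of the solution $v$ to the good-sign problem \eqref{support_system}, exploit that $\al<0$ makes the weight vanish on $\Ga$ (so $u|_\Ga=0$ for free and $\nabla u$ has the right integrability), and then invoke Theorem \ref{Theorem_3} and Proposition \ref{L_p_estimate} to finish. The one place where you diverge from the paper is how you verify that $u$ is a $p$-weak solution on all of $\Om$: you establish the classical PDE on $\Om\setminus\Ga$ and then pass through a cut-off $\chi_\ep$ supported away from $\Ga$, controlling the $\nabla\chi_\ep$ boundary terms by $|\{\ep<|x'|<2\ep\}\cap\Om|\lesssim\ep^2$ and $|\al|>0$; the paper instead works entirely in the weak formulation, taking $|x'|^{|\al|}\eta$ (which lies in $\overset{\circ}{W}{^1_2}(\Om)\cap L_\infty(\Om)$) as a test function in the weak form of \eqref{support_system} and applying the product-rule identities directly. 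Both verifications are sound; the paper's is shorter since it never leaves the distributional setting, while yours makes the Darboux computation and the ``transparency'' of $\Ga$ more explicit and also spells out the uniqueness via the energy identity (which the paper leaves implicit, deferring it to the remark after Theorem \ref{Theorem_1} and the estimate \eqref{Main_Estimate}). Your explicit range $2<p_0<\min\{p_1,2/(1-|\al|)\}$ for $|\al|<1$ matches the paper's condition \eqref{p_depends_on_alpha}.
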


\begin{proof}
Let     $v$ be a $p_1$-weak solution of \eqref{support_system}.
Denote
$$
u(x) \, = \, |x'|^{|\al|} \,  v(x)
$$
If $|\al|\ge 1$ then the function $|x'|^{|\al|}$ is Lipschitz continuous. Hence we obtain $u\in W^1_{p_1}(\Om)$. In the case $0<|\al|<1$ we have $u\in W^1_{p }(\Om)$ for any $p $ satisfying
\begin{equation}\label{p_depends_on_alpha}
2< p  \, < \, \min\left\{\, p_1, \, \frac{2}{1-|\al|} \, \right\}.
\end{equation}
In any case we obtain   $u\in \overset{\circ}{W}{^1_{p }}(\Om)$ for some $p >2$, $u$ is H\" older continuous on $\bar \Om$ and satisfies the condition \eqref{Zero_on_Gamma}. Let us verify $u$ is a $p $-weak solution to \eqref{Equation_h}. Indeed,  taking arbitrary $\eta \in C_0^\infty(\Om)$ and testing \eqref{support_system} by $|x'|^{|\al|}\eta $ with the help of  identities
$$
\nabla \big(|x'|^{|\al|} \eta\big) \, = \,  |x'|^{|\al|} \Big(\nabla \eta + |\al|\, \frac {x'}{|x'|^2}\, \eta\Big), \qquad  |x'|^{|\al|} \nabla v  \ = \ \nabla u  + \al  u \frac{x'}{|x'|^2}
$$
we arrive at the identity
$$
\int\limits_\Om \Big( \nabla u + \al\,  u \frac{x'}{|x'|^2}\Big) \cdot \nabla \eta\, dx \ = \ \int\limits_\Om f\cdot \nabla \eta\, dx , \qquad \forall \, \eta\in C_0^\infty(\Om).
$$
Taking into account   $u|_\Ga =0$ and using integration by parts we obtain
$$
\al\, \int\limits_\Om \frac{x'}{|x|^2}\cdot \nabla u \, \eta \, dx  \ = \ -\al \,    \int\limits_\Om \frac{x'}{|x|^2}\cdot \nabla \eta  \, u \, dx
$$
which gives
$$
- \Delta u -\al \frac{x'}{|x'|^2}\cdot \nabla u   \ = \ - \div f \qquad \mbox{in} \quad \mathcal D'(\Om).
$$
Now we can fix $\mu\in (0,1)$ and $p>2$ as in Theorem \ref{Theorem_3} and    Proposition \ref{L_p_estimate}. Without loss of generality we can assume \eqref{p_depends_on_alpha} is satisfied. Then from Theorem \ref{Theorem_3} and  in Proposition \ref{L_p_estimate} we obtain inequalities \eqref{Holder_Estimate} and \eqref{Main_Estimate}. Note that  $\mu\in (0,1)$ and $p>2$   depend only on $\Om$, $q$ and $\al$.
\end{proof}

Now  we can relax the assumption on the smoothness of the right hand side $f$.

\begin{proposition}\label{prop7.3}
 Assume $\al<0$, $q>3$ and assume  $\mu\in (0,1)$ is defined in Theorem \ref{Theorem_3}  and $p>2$ is defined in Proposition \ref{prop7.2}. Then for any  $f\in L_q(\Om)$  there exists a unique $p$-weak solution $u$ of the system \eqref{Equation_h} which satisfies  \eqref{Zero_on_Gamma}. Moreover, $u\in C^\mu(\bar \Om)$    and the estimates \eqref{Holder_Estimate}, \eqref{Main_Estimate} hold.
\end{proposition}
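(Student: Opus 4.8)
The plan is to reduce the general case $f\in L_q(\Om)$ to the smooth case already settled in Proposition \ref{prop7.2} by an approximation–compactness argument, and then to extract uniqueness from the energy identity. First I would fix $f\in L_q(\Om)$ and choose a sequence $f_k\in C_0^\infty(\Om\setminus\Ga)$ with $f_k\to f$ in $L_q(\Om)$; this is legitimate since $\Ga\cap\Om$ has Lebesgue measure zero, so $C_0^\infty(\Om\setminus\Ga)$ is dense in $L_q(\Om)$. For each $k$, Proposition \ref{prop7.2} furnishes a $p$-weak solution $u_k\in\overset{\circ}{W}{^1_p}(\Om)$ of \eqref{Equation_h} with $u_k|_{\Ga}=0$, and the estimates \eqref{Holder_Estimate}, \eqref{Main_Estimate} give the uniform bound
$$
\|u_k\|_{C^\mu(\bar\Om)}\,+\,\|u_k\|_{W^1_p(\Om)}\ \le\ c\,\|f_k\|_{L_q(\Om)}\ \le\ c\,\|f\|_{L_q(\Om)}.
$$
Consequently, after passing to a subsequence, $u_k\rightharpoonup u$ weakly in $\overset{\circ}{W}{^1_p}(\Om)$ and, by the Arzel\`a--Ascoli theorem, $u_k\to u$ in $C^{\mu'}(\bar\Om)$ for every $\mu'<\mu$. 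In particular $u\in C^\mu(\bar\Om)$ by lower semicontinuity of the H\"older seminorm, $u|_{\Ga}=0$, and the estimates \eqref{Holder_Estimate}, \eqref{Main_Estimate} pass to the limit by weak lower semicontinuity of the norms.

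Next I would verify that $u$ is a $p$-weak solution of \eqref{Equation_h}. Writing the identity \eqref{Identity} for $u_k$ against a fixed $\eta\in C_0^\infty(\Om)$ and letting $k\to\infty$: the principal term $\int_\Om\nabla u_k\cdot\nabla\eta\,dx$ converges because $\nabla u_k\rightharpoonup\nabla u$ in $L_p(\Om)$; the drift term $\mathcal B[u_k,\eta]$ converges because $\eta\,\tfrac{x'}{|x'|^2}\in L_{p'}(\Om)$ (here $p'<2$ and $\tfrac{x'}{|x'|^2}$ lies in the weak Morrey space, so that $L_{p'}$ membership follows from Proposition \ref{Holder_inequality}), hence it pairs against the weakly convergent $\nabla u_k$; and the right-hand side converges since $f_k\to f$ in $L_q(\Om)\hookrightarrow L_1(\Om)$. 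Thus $u$ satisfies \eqref{Identity} and \eqref{Zero_on_Gamma}, which is the desired existence statement.

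For uniqueness, let $u_1,u_2$ be two $p$-weak solutions of \eqref{Equation_h} satisfying \eqref{Zero_on_Gamma} and set $w:=u_1-u_2\in\overset{\circ}{W}{^1_p}(\Om)$. By Theorem \ref{Theorem_3} each $u_i$, hence $w$, belongs to $C^\mu(\bar\Om)\subset L_\infty(\Om)$, and $w|_{\Ga}=0$. The equation satisfied by $w$ has zero right-hand side (in particular in $L_2$), so the density remark following Definition \ref{p-weak} permits using $w$ itself as a test function in \eqref{Identity}, which gives $\int_\Om|\nabla w|^2\,dx+\mathcal B[w,w]=0$. By Proposition \ref{Q_Form} (applied with vanishing divergence-free part and parameter $-\al$), $\mathcal B[w,w]$ is a constant multiple of $\int_\Ga|w(x)|^2\,dl_x$, which vanishes because $w|_{\Ga}=0$. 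Hence $\nabla w\equiv0$, and since $w\in\overset{\circ}{W}{^1_2}(\Om)$ we get $w\equiv0$.

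I expect the genuinely delicate step to be the passage to the limit in the trace structure on $\Ga$: one must ensure $u_k|_{\Ga}\to u|_{\Ga}$ so that both the constraint \eqref{Zero_on_Gamma} and the singular drift term persist in the limit. Weak $W^1_p$-convergence with $p$ only slightly above $2$ does not by itself control the trace on the one-dimensional curve $\Ga\subset\Bbb R^3$; it is precisely the uniform H\"older bound supplied by Theorem \ref{Theorem_3} that upgrades this to uniform convergence on $\bar\Om$ and closes the argument. The uniqueness part, by contrast, is essentially immediate once one knows (again from Theorem \ref{Theorem_3}) that competing solutions are bounded, so that the energy identity and Proposition \ref{Q_Form} apply.
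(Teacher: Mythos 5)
Your proof is correct and follows essentially the same approximation--compactness route as the paper: approximate $f$ by $f_k\in C_0^\infty(\Om\setminus\Ga)$, invoke Proposition \ref{prop7.2} and the uniform bounds \eqref{Holder_Estimate}, \eqref{Main_Estimate}, extract a weakly-$W^1_p$/uniformly convergent subsequence, and pass to the limit in \eqref{Identity} and in the trace condition. The paper leaves the uniqueness claim to the a priori bound \eqref{Main_Estimate} applied to the difference of two solutions, whereas you derive it directly from the energy identity together with Proposition \ref{Q_Form}; both arguments are sound and essentially equivalent, so the difference is cosmetic.
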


\begin{proof}
  Assume $f\in L_q(\Om)$ and take  $f_\ep\in C^{\infty}_0(\Omega\setminus\Ga)$  so that $\|f_\ep-f\|_{L_q(\Omega)} \to 0$ and $\ep\to 0$.
   From Proposition \ref{prop7.2} we obtain the existence of $p$-weak solutions  $\{u_\ep\}$  to the problem \eqref{Equation_h} with right hand side $f_\ep$ Moreover, using  estimates \eqref{Holder_Estimate} and \eqref{Main_Estimate} we obtain inequality
  $$
  \|u_\ep\|_{W^1_p(\Om)}  \, +\, \| u_\ep \|_{C^\mu(\bar \Om)} \, \le \, c\, \|f_\ep\|_{L_q(\Om)}.
  $$
  Hence we can extract a subsequence such that $u_\ep\rightharpoonup u$ in $W^1_q(\Om)$ and $u_\ep\to u$ uniformly in $\bar \Om$. It is easy to check that $u$ will satisfy \eqref{Equation_h} with right hand side $f$ and   \eqref{Zero_on_Gamma} holds.
\end{proof}

Now we can prove the existence of $p$-weak solutions to the problem \eqref{Equation} in the case of  a smooth divergence free part of the drift.
\begin{proposition}\label{prop7.4}
 Assume $\al<0$, $q>3$ and  $b\in C^{\infty}(\Om)$ satisfies $\div b=0$. Then there exist $\mu\in (0,1)$ and $p>2$ depending only on  $q$, $\al$, $\|b\|_{L_w^{2,1}(\Om)}$ and the Lipschitz constant of $\cd \Om$  such that
 for any $f\in L_q(\Om)$ there exists a unique $p$-weak solution $u$ to the problem \eqref{Equation} which  satisfies the condition \eqref{Zero_on_Gamma}. Moreover,
 $u\in C^\mu(\Om)$ and the estimates \eqref{Holder_Estimate} and \eqref{Main_Estimate} hold.
\end{proposition}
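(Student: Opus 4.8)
\smallskip\noindent\emph{Uniqueness.} Uniqueness will follow directly from the energy identity, exactly as announced in the introduction. If $u_1,u_2$ are two $p$--weak solutions of \eqref{Equation} satisfying \eqref{Zero_on_Gamma}, then by Theorem \ref{Theorem_3} both are H\"older continuous, hence bounded, so $w:=u_1-u_2\in\overset{\circ}{W}{^1_p}(\Om)\cap L_\infty(\Om)$ and $w|_\Ga=0$. Since $f\in L_q(\Om)\subset L_2(\Om)$, the class of test functions in \eqref{Identity} may be enlarged to $\overset{\circ}{W}{^1_2}(\Om)\cap L_\infty(\Om)$, and taking $\eta=w$ in the identity satisfied by $w$ gives $\int_\Om|\nabla w|^2\,dx+\mathcal B[w,w]=0$. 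By Proposition \ref{Q_Form} one has $\mathcal B[w,w]=\pi\al\int_\Ga|w|^2\,dl_x=0$ because $w|_\Ga=0$, whence $\nabla w\equiv 0$ and $w\equiv 0$.

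\smallskip\noindent\emph{Existence: reduction to a fixed point.} The plan is to treat \eqref{Equation} as a compact perturbation of the problem with $b\equiv 0$, which has already been solved in Proposition \ref{prop7.3}. Let $p_0>2$ be the exponent furnished by Proposition \ref{prop7.3}, and for $g\in L_q(\Om)$ let $Sg$ denote the unique $p_0$--weak solution of \eqref{Equation} in the case $b\equiv 0$ (i.e.\ of problem \eqref{Equation_h}) with right-hand side $-\div g$ that satisfies \eqref{Zero_on_Gamma}; by the estimates \eqref{Holder_Estimate} and \eqref{Main_Estimate} (applied with $b\equiv 0$) the operator $S$ is bounded from $L_q(\Om)$ into $C^\mu(\bar\Om)\cap\overset{\circ}{W}{^1_{p_0}}(\Om)$. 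Since $b$ is smooth, hence bounded on $\bar\Om$, and $\div b=0$, we have $b\cdot\nabla u=\div(bu)$ for $u\in W^1_{p_0}(\Om)$, and $bu\in L_\infty(\Om)\subset L_q(\Om)$ whenever $u\in C^\mu(\bar\Om)$. Moving $b\cdot\nabla u=\div(bu)$ to the right-hand side of \eqref{Equation} and using the uniqueness part of Proposition \ref{prop7.3}, one checks that a function $u\in C^\mu(\bar\Om)$ is a $p_0$--weak solution of \eqref{Equation} satisfying \eqref{Zero_on_Gamma} if and only if $u=Sf+S(bu)$ in $C^\mu(\bar\Om)$.

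\smallskip\noindent\emph{Existence: the Fredholm alternative.} The operator $T\colon u\mapsto S(bu)$ is compact on $C^\mu(\bar\Om)$, being the composition of the compact embedding $C^\mu(\bar\Om)\hookrightarrow C(\bar\Om)$, multiplication by the bounded function $b$ (a bounded map $C(\bar\Om)\to L_q(\Om)$), and the bounded operator $S\colon L_q(\Om)\to C^\mu(\bar\Om)$. By the Fredholm alternative, $I-T$ is boundedly invertible on $C^\mu(\bar\Om)$ as soon as its kernel is trivial; but $u=Tu$ forces $u$ to be a $p_0$--weak solution of \eqref{Equation} with $f=0$ satisfying \eqref{Zero_on_Gamma}, so $u\equiv 0$ by the uniqueness established above. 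Hence $u:=(I-T)^{-1}(Sf)$ exists and is a $p_0$--weak solution of \eqref{Equation} satisfying \eqref{Zero_on_Gamma}. Being such a solution, it enjoys $u\in C^\mu(\bar\Om)$ together with \eqref{Holder_Estimate} by Theorem \ref{Theorem_3}, and, after the gain of integrability provided by Proposition \ref{L_p_estimate}, it is a $p$--weak solution satisfying \eqref{Main_Estimate} for the exponent $p>2$ produced there; both $\mu$ and $p$ depend only on $q$, $\al$, $\|b\|_{L^{2,1}_w(\Om)}$ and the Lipschitz constant of $\cd\Om$, as required.

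\smallskip\noindent\emph{Where the work lies.} Essentially all of the analytic content is already in hand: the a priori H\"older estimate (Theorem \ref{Theorem_3}), the higher integrability estimate (Proposition \ref{L_p_estimate}), and the solvability of the problem with the purely singular drift (Proposition \ref{prop7.3}). The only thing that has to be arranged is the reduction to a compact perturbation --- and the point is that the divergence-free structure of $b$ is exactly what lets one absorb $b\cdot\nabla u$ into a right-hand side $-\div(bu)$ with $bu$ lying in the data class $L_q(\Om)$ handled by Proposition \ref{prop7.3}, while the a priori H\"older bound is what guarantees $bu\in L_q(\Om)$ and hence the compactness of $T$. Note also that the quantitative estimates \eqref{Holder_Estimate}, \eqref{Main_Estimate} are not read off from the (non-quantitative) inverse of $I-T$ but from the a priori estimates applied to the solution just constructed; this is why the constants there depend only on the ``critical'' norm $\|b\|_{L^{2,1}_w(\Om)}$ and not on $\|b\|_{L_\infty}$. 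An essentially equivalent route would be a continuity argument along the drifts $tb+\al\nabla h$, $t\in[0,1]$, closedness of the set of admissible $t$ being supplied by the uniform a priori bounds and openness by the contraction estimate underlying the invertibility of $I-T$.
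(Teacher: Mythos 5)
Your proof is correct, but it follows a genuinely different fixed-point route than the paper. Both proofs start from the same reduction: since $\div b=0$, one treats $b\cdot\nabla u=\div(bu)$ as additional data for the problem with only the singular drift $\al\,x'/|x'|^2$, which Proposition \ref{prop7.3} solves. The paper then sets up a Leray--Schauder scheme in $L_q(\Om)$: it defines $A(v):=u^v$ with $u^v$ solving the singular-drift problem with right-hand side involving $bv$, checks continuity and compactness of $A$, and verifies the required a priori bound along the homotopy $t\mapsto tb-\al\,x'/|x'|^2$, $t\in[0,1]$, using the critical estimates of Theorem \ref{Theorem_3} and Proposition \ref{L_p_estimate}. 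You instead work in $C^\mu(\bar\Om)$, write the equation as $(I-T)u=Sf$ with $T:u\mapsto S(bu)$ compact, and invoke the Fredholm alternative, reducing existence to the uniqueness that you establish first via the energy identity and Proposition \ref{Q_Form}. For a linear problem with uniqueness already in hand your route is slightly more economical: it avoids the homotopy argument and the need for a $\lambda$-uniform a priori bound, and the kernel of $I-T$ is trivial purely by uniqueness. Conversely, the paper's Leray--Schauder argument is the one that generalizes to nonlinear perturbations, which may be why the authors chose it. Both proofs correctly observe that the final quantitative bounds \eqref{Holder_Estimate}, \eqref{Main_Estimate} come from the a priori estimates (hence depend only on the critical norm $\|b\|_{L^{2,1}_w}$) and not from the implicit inverse operator. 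One cosmetic remark that applies to both your proof and the paper's: the hypothesis $b\in C^\infty(\Om)$ does not by itself give $b\in L_\infty(\Om)$, which is used to make $bu\in L_q(\Om)$ and to get compactness; one should read the hypothesis as $b\in C^\infty(\bar\Om)$ (as indeed happens for the mollified drifts $b_k$ used in the proof of Theorem \ref{Theorem_1}).
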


\begin{proof}
Take  any $v\in L_q(\Om)$. From Proposition \ref{prop7.3} we obtain the existence the unique $p$-weak solution  $u^v$  to the problem
   \begin{equation}
\left\{ \quad \gathered     -\Delta u^v -  \al\, \frac{x'}{|x'|^2} \cdot \nabla u^v \ = \ \div (f-bv) \qquad\mbox{in}\quad \Om, \\
u^v|_{\cd \Om} \ = \ 0.
\endgathered\right.
\end{equation}
such that $$u^v|_\Ga =0.$$ Moreover, $u^v\in C^\mu (\bar \Om)$ and the estimate
\begin{equation}\label{A_priori_Schauder}
  \|u^v \|_{W^1_p(\Om)} \, +\, \| u^v \|_{C^\mu(\bar \Om)} \, \le \, c\, \Big(\|f\|_{L_q(\Om)} \, + \, \|b\|_{L_\infty(\Om)}\| v\|_{L_q(\Om)}\Big)
\end{equation}
holds.  Define the operator $A: L_q(\Om)\to L_q(\Om)$, $A(v):= u^v$. Applying Theorem \ref{Theorem_3} and Proposition \ref{L_p_estimate} for any $v_1$, $v_2\in L_q(\Om)$ we obtain the inequality
 $$
  \|A(v_1)-A(v_2)\|_{W^1_p(\Omega)} \, + \,  \|A(v_1)-A(v_2)\|_{C^\mu(\bar \Omega)}    \, \le \, c\, \|b\|_{L_\infty(\Omega)} \|v_1-v_2\|_{L_q(\Omega)}
  $$
  which implies the operator $A: L_q(\Om)\to L_q(\Om)$ is continuous. Moreover, from  \eqref{A_priori_Schauder}
  taking into account
  compactness of the imbedding $W^1_p(\Om)\hookrightarrow L_p(\Om)$  it is easy to see  that the operator $A: L_q(\Om)\to L_q(\Om)$ is compact. Hence we can apply the Leray-Shauder fixed point theorem.
Assume $\la\in [0,1]$ and $v\in L_q(\Om)$ satisfies $v=\la A(v)$.
Denote $u:=A(v)$. Then $u$ is  a unique  $p$-weak solution to the
problem
$$
\left\{ \quad \gathered  -\Delta u  +\Big( \la b-\al \, \frac{x}{|x|^2}\Big)\cdot \nabla u \ = \ -\div f \qquad\mbox{in}\quad \Om, \\
u|_{\cd \Om} \ = \ 0,
\endgathered\right.
 $$
 which satisfies the condition \eqref{Zero_on_Gamma}. Hence from   Theorem \ref{Theorem_3} and Proposition \ref{L_p_estimate}  we obtain the estimate
 $$
 \| u\|_{W^1_p(\Om)} \, +\, \| u  \|_{C^\mu(\bar \Om)} \, \le \, c\, \| f\|_{L_q(\Om)}
 $$
  with some constant $c$ depending only on  $\Om$, $q$, $\al$ and $\|  b\|_{L_{2,w}(\Om)}$ and independent of $\la\in [0,1]$. Hence there
 exists   $u\in L_q(\Om)$  satisfying $u=A(u)$.
\end{proof}

Now we can prove Theorem \ref{Theorem_1}.

\begin{proof}
We follow the method similar to \cite[Theorem 2.1]{Kwon_1},  and \cite{Verchota_1},\cite{Verchota_2}. Let us fix some $q>3$ and let
 $\mu\in (0,1)$ and $p>2$ be the constants determined in  Theorem \ref{Theorem_3}   and  Proposition \ref{Theorem_1} respectively.
Denote $p'=\frac p{p-1}$.
As $\Om$ is Lipschitz we can find  the sequence of $C^2$-smooth domains
$\{ \Om_k\}_{k=1}^\infty$   such that $$\Om_{k+1}\Subset \Om_k, \qquad \bigcup\limits_k \Om_k = \Om.$$
Moreover,  it is possible to  construct   domains $\Om_k$ so that  the Lipschitz constants of $\cd \Om_k$ are controlled uniformly by the Lipschitz constant of $\cd \Om$. In particular, we can assume there are exist positive constants $\hat \dl_0$ and $\hat R_0$ independent on $k\in \Bbb N$ such that
\begin{equation}\label{Oblast'_k}
\forall\, k\in \Bbb N, \qquad  \forall\, R<\hat R_0, \qquad \forall\, x_0\in \cd \Om_k \qquad |B_R(x_0) \setminus \Om_k | \, \ge \,   \hat  \dl_0 \, |B_R|. 
\end{equation}
For a canonical domain given as a subgraph of a Lipschitz function existence of such approximation can be obtained by mollification and shift of the graph, and for general bounded Lipschitz domain  the standard localization works.

Now we take a sequence of positive numbers $\ep_k\to 0$ such that   $\ep_k < \operatorname{dist}\{ \bar \Om_k, \cd \Om\}$, and define the mollification of the drift $b$:
  $$
  b_k (x )  \, := \, \int\limits_{\Om} \om_{\ep_k}(x-y)b(y)\, dy, \qquad x\in \Om,
$$
where $\om_\ep(x):=\ep^{-n}\om(x/\ep)$ and $\om\in C_0^\infty(\Bbb R^n)$ is the standard Sobolev kernel, i.e.
\begin{equation}
\om \, \ge \, 0, \qquad \supp \om \in \bar B, \qquad \int\limits_{\Bbb R^n}\om(x)\, dx \, = \, 1, \qquad \om(x)=\om_0(|x|).
\label{Sobolev_kernel}
\end{equation}
Then $b_k \in C^\infty(\bar \Om)$ and as $\ep_k<\operatorname{dist}\{ \bar \Om_k, \cd \Om\}$ from $\div b=0$ in $\mathcal D'(\Om)$   for any $k $ we obtain
\begin{equation}\label{Assumptions_b_k}
\div  b_k \, = \, 0 \qquad\mbox{in} \quad \Om_k.
\end{equation}
Moreover, from \cite[Proposition 6.1]{Chernobai_Shilkin_1} we obtain there is  a constant $c>0$ independent on $k$ such that
\begin{equation}\label{Drift_approximation1}
\| b_k\|_{L^{2,1}_w(\Om_k)} \, \le \, c\, \|b\|_{L^{2,1}_w(\Om)}, \qquad \| b_k -b\|_{L_{p'}(\Om)} \to 0 \quad \mbox{as} \quad k \to 0.
\end{equation}
For  $f\in L_q(\Om)$  we can find    $f_k\in C^\infty_0(  \Om_k )$ such that $\|f_k-f\|_{L_q(\Om)}\to 0$. From Proposition \ref{prop7.4} we conclude that for any $k\in \Bbb N$ there exists  a unique $p$-weak solution $u_k\in W^1_p(\Om)\cap C^\mu(\bar \Om)$  to the  problem
  \begin{equation}\label{Equation_Approx}
\left\{ \quad \gathered     -\Delta u_k +  \Big( b_k -\al \, \frac{x'}{|x'|^2}\Big) \cdot \nabla u_k \ = \ -\div f_k \qquad\mbox{in}\quad  \Om_k , \\
u_k|_{\cd \Om_k}  \ = \ 0,
\endgathered\right.
\end{equation}
which satisfies  the condition
$$
u_k|_{\Ga} = 0
$$
in the sense of traces.
Extend functions $u_k$ by zero from $\Om_k$ onto $\Om$. From Proposition \ref{L_p_estimate} we obtain the estimate
$$
\| u_k \|_{W^1_p(\Om)} \ \le \ c\, \| f_k\|_{L_p(\Om)}
$$
with a constant  $c>0$ depending only on  $q$,  $\| b\|_{L^{2, 1}_w(\Om)}$  and the constant $\hat \dl_0$ in \eqref{Oblast'_k} which is independent on $k$.  Hence we can take a subsequence $u_k$ such that
$$
u_k \rightharpoonup u \quad \mbox{in}\quad W^1_p(\Om).
$$
As for $p>2$  the trace operator is compact from $W^1_p(\Om)$ into $L_p(\Ga)$ we obtain
$$
u|_{\Ga} = 0
$$
in the sense of traces.
 Take any $\eta\in C_0^\infty(\Om)$, due to our construction of   $\Omega_k$   for sufficiently large $k$ we have $\supp \eta\subset\Om_k$ and hence   $\eta$ is a suitable test function in \eqref{Equation_Approx}.  As $b_k \to b$ in $L_{p'}(\Om)$ we can pass to the limit in the  identity
$$
\gathered  \int\limits_{\Om}   \nabla u_k \cdot\left(
\nabla\eta + \Big( b_k -\al\, \frac{x'}{|x'|^2}\Big)\eta \right) \, dx   \ = \
\int\limits_{\Om} f_k \cdot \nabla \eta~dx   ,
\endgathered
\label{Identity_eps}
$$
and obtain \eqref{Identity}. Hence $u \in \overset{\circ}{W}{^1_p}(\Om)$ is a $p$-weak solution to the problem \eqref{Equation} satisfying \eqref{Zero_on_Gamma} and \eqref{Main_Estimate}.
From Theorem \ref{Theorem_3} we obtain $u\in C^\mu(\bar \Om)$ and the estimate \eqref{Holder_Estimate}.
The  uniqueness of $u$ follows from the estimate \eqref{Main_Estimate}.
\end{proof}

\newpage

\end{document}